\numberwithin{equation}{section}
\newtheorem{theorem}{Theorem}[section]
\newtheorem{proposition}[theorem]{Proposition}
\newtheorem{corollary}[theorem]{Corollary}
\newtheorem{lemma}[theorem]{Lemma}
\theoremstyle{definition}
\newtheorem{remark}[theorem]{Remark}
\newtheorem{definition}[theorem]{Definition}
\newtheorem{question}[theorem]{Question}
\theoremstyle{remark}
\newcommand{\R}{\mathbb{R}}  
\newcommand{\Sph}{\mathbb{S}}  
\DeclareMathAlphabet{\mathbbold}{U}{bbold}{m}{n}
\DeclareMathOperator{\id}{id}
\DeclareMathOperator{\Int}{Int}
\DeclareMathOperator{\dom}{dom}
\DeclareMathOperator{\ind}{ind}
\DeclareMathOperator{\rk}{rk}
\DeclareMathOperator{\Tr}{Tr}
\DeclareMathOperator{\supp}{supp}
\DeclareMathOperator{\Area}{Area}
\DeclareMathOperator{\Vol}{Vol}
\DeclareMathOperator{\BMO}{BMO}
\DeclareMathOperator{\Gr}{Gr}
\DeclareMathOperator{\Fl}{Flag}
\DeclarePairedDelimiterX\set[2]{\{}{\}}{\,#1 \;\delimsize\vert\; #2\,}
\newcommand*{\dv}[1]{dv_{#1}} 
\newcommand*{\SHM}[1]{u_{\odot}^{#1}} 
\newcommand*{\abs}[1]{\left|#1\right|} 
\newcommand*{\norm}[1]{\left\|#1\right\|} 
\newcommand*{\brr}[1]{\left(#1\right)} 
\newcommand*{\brs}[1]{\left[#1\right]} 
\newcommand*{\brc}[1]{\left\{#1\right\}} 
\newcommand*{\brt}[1]{\left\langle #1\right\rangle } 
\mathchardef\mhyphen="2D 
\renewcommand*{\epsilon}{\varepsilon}
\renewcommand*{\phi}{\varphi}
\newcommand*\quot[2]{
  \mathchoice
      {
          \text{\raisebox{.2em}{$#1$}\Big/\raisebox{-.2em}{$#2$}}%
      }
      {
          #1/#2
      }
      {
          #1\,/\,#2
      }
      {
          #1\,/\,#2
      }
}
\def\Xint#1{\mathchoice
{\XXint\displaystyle\textstyle{#1}}%
{\XXint\textstyle\scriptstyle{#1}}%
{\XXint\scriptstyle\scriptscriptstyle{#1}}%
{\XXint\scriptscriptstyle\scriptscriptstyle{#1}}%
\!\int}
\def\XXint#1#2#3{{\setbox0=\hbox{$#1{#2#3}{\int}$ }
\vcenter{\hbox{$#2#3$ }}\kern-.6\wd0}}
\def\dashint{\Xint-}
\newcommand*{\oset}[3][0.45ex]{%
  \mathrel{\mathop{#3}\limits^{
    \vbox to#1{\kern-2\ex@
    \hbox{$\scriptstyle#2$}\vss}}}}
\newcommand*{\m}[1]{\brs{#1}} 
\newcommand*{\md}[1]{\mathfrak{D}\brs{#1}} 
\newcommand*{\tensor}[1]{\mathcal{N}[#1]} 
\begin{document}
\author{Denis Vinokurov}
\date{}
\title{Maximizing higher eigenvalues in dimensions three and above}
\maketitle
\begin{abstract}
  We study the problem of maximizing the $k$-th eigenvalue functional
  over the class of absolutely continuous measures on
  a closed Riemannian manifold of dimension $m\geq 3$.
  Extending the work of Karpukhin and Stern on the first eigenvalue, we prove that, for every
  $k\geq 1$, the supremum is attained by a measure induced by a harmonic map into a finite-dimensional sphere.
  The map is smooth outside a closed singular set of Hausdorff dimension at most $m-7$, and is therefore smooth when
  $3 \leq m \leq 6$.
  We further prove that this dimension bound is optimal: for every $m \geq 7$
  and every integer $0\leq d \leq m-7$, there exists a maximizing harmonic map on the $m$-dimensional round sphere
  whose singular set has Hausdorff dimension $d$.
\end{abstract}
\tableofcontents

\section{Introduction and main results}

\subsection{Eigenvalue optimization in high dimensions}
Let $(M,g)$ be a closed connected Riemannian manifold with $m = \dim M$, and let
$\Delta_g = \delta_g d \colon C^\infty(M) \to C^\infty(M)$ be the associated
Laplace-Beltrami operator, where $\delta_g$ is the formal adjoint of $d$.
The spectrum of $\Delta_g$ forms the following
nondecreasing sequence:
\begin{equation}
  0 = \lambda_0(g) < \lambda_1(g) \leq \lambda_2(g) \leq
  \cdots\nearrow \infty,
\end{equation}
where the eigenvalues are repeated according to their multiplicities. The following variational characterization
of $\lambda_k(g)$ is well known:
\begin{equation}\label{eq:eigen-var-char-metric}
  \lambda_k(g) = \inf_{F_{k+1}} \sup_{\phi \in F_{k+1}\setminus\brc{0}} \frac{\int \abs{d\phi}^2_g dv_g}{\int \phi^2 dv_g},
\end{equation}
where $F_{k+1} \subset C^\infty(M)$ ranges over $(k+1)$-dimensional subspaces.

Let $\mathcal{M}(M)$ be the space of all nonzero nonnegative Radon measures on $M$, and
let $\mathcal{M}_c(M) \subset \mathcal{M}(M)$ be the subset of continuous (i.e. atomless) Radon measures.
For $\mu \in \mathcal{M}(M)$, one defines $\lambda_k(\mu) \in [0,\infty]$
in parallel with the eigenvalues~\eqref{eq:eigen-var-char-metric} by:
\begin{equation}\label{meas-eigenval}
  \lambda_k(\mu) = \lambda_k(\mu, g) := \inf_{F_{k+1}} \sup_{\phi \in F_{k+1}\setminus\brc{0}} \frac{\int \abs{d\phi}^2_g dv_g}{\int \phi^2 d\mu},
\end{equation}
where $F_{k+1}$ is the same as in~\eqref{eq:eigen-var-char-metric}. Note that $\lambda_k(\mu) < \infty$ as long as $L^2(M,\mu)$ is at least
$(k + 1)$-dimensional. In particular, $\lambda_k(\mu) < \infty$ for each $\mu \in \mathcal{M}_c(M)$.
If $\mu$ is regular enough, e.g., the map $H^1(M) \to L^2(M, \mu) $ is compact,
the variational eigenvalues~\eqref{meas-eigenval} are the true eigenvalues
of the problem
\begin{equation}\label{eq:meas-eigenfunc}
  \Delta_g \phi = \lambda_k \phi \mu.
\end{equation}
Let us consider the following optimization problem:
\begin{equation}\label{measure-opt}
  \mathcal{V}_k(g) = \sup_{\substack{\mu \in C^\infty\\ \mu > 0}} \overline{\lambda}_k(\mu),
  \quad \text{where } \overline{\lambda}_k(\mu) := \lambda_k(\mu)\mu(M).
\end{equation}

The eigenvalues of measures~\eqref{meas-eigenval} were first introduced and
studied in~\cite*{Kokarev:2014:measure-eigenval} in the context of Riemannian surfaces.
Notice that in dimension $m = 2$, the Dirichlet energy density $\abs{d\phi}^2_g dv_g$ is
conformally invariant. Hence, $\mathcal{V}_k(g) = \mathcal{V}_k(\tilde{g})$
for all $\tilde{g} \in [g]$, and
the optimization problem~\eqref{measure-opt} coincides with the eigenvalue
optimization in the conformal class:
\begin{equation}
  \mathcal{V}_k(g) = \Lambda_k([g]) := \sup_{\tilde{g} \in [g]} \lambda_k(\tilde{g}) \Area (\tilde{g}).
\end{equation}
Thus, $\mathcal{V}_k(g)$ can be viewed as another
generalization of the eigenvalue optimization problem on surfaces
as opposed to the optimization within a conformal class in higher dimensions.

For existence and regularity results on maximizers of $\Lambda_k([g])$ on surfaces,
see~\cite{Karpukhin-Nadirashvili-Penskoi-Polterovich:2022:existence,
Petrides:2018:exist-of-max-eigenval-on-surfaces}. See also \cite{Kim:2022:second-sphere-eigenval,
Karpukhin-Metras-Polterovich:2024:dirac-eigenval-opt,
Perez-Ayala:2021:conf-laplace-sire-xu-norm,
Perez-Ayala:2022:extr-metrics-paneitz-operator,
Gursky-Perez-Ayala:2022:2d-eigenval-conform-laplacian}
for some related eigenvalue optimization results.

The motivation for the above optimization problem arises from its
connection to harmonic maps into spheres. Namely, critical measures
$\mu$ of the functional $\overline{\lambda}_k$
are generated by such maps. More precisely, one can construct
a map $u\colon (M, g) \to \Sph^n$
whose coordinate functions are $k$-th eigenfunctions of~\eqref{eq:meas-eigenfunc}. Then it follows that
$\mu = \abs{du}^2_g dv_g$, $\lambda_k(|du|^2_g dv_g) = 1$,
and the map $u$ solves the equation
\begin{equation}\label{extr-measures}
  \Delta_g u = \abs{du}^2_g u
\end{equation}
coordinatewise
(see, e.g., \cite*[Proposition~4.13]{Karpukhin-Stern:2024:harm-map-in-higher-dim}).
Recall that a map $u\colon (M, g) \to (N,h)$ is called
harmonic if it is a critical point of the energy functional
\begin{equation}
  E[u] = \int \abs{du}^2_{g,h} dv_g.
\end{equation}
In the case $(N,h) = \Sph^n$,  the associated Euler--Lagrange equation is
precisely~\eqref{extr-measures}.

Another interesting observation is that
the suprema~\eqref{measure-opt} admit an equivalent formulation
in terms of an optimization problem involving the negative eigenvalues
of Schrödinger operators. Let $V \in C^\infty(M)$ be a positive
function, and consider the Schrödinger operator $\Delta_g - V$.
We define the \emph{(spectral) index} of $V$, denoted
\begin{equation}
  \ind V := \ind (\Delta_g - V),
\end{equation}
as the maximal dimension of subspaces on which $\Delta_g - V$
is negative definite, which is the same as the number of
negative eigenvalues (counted with multiplicities) of the operator.

Using the correspondence $V = \lambda_k(\mu) \mu$, where $\mu \in C^\infty$,
one immediately sees that $\ind V \leq k$, and hence
\begin{equation}
  \mathcal{V}_k(g) = \sup \set*{\int V dv_g}{V \in C^\infty,\ V > 0,\ \ind V \leq k},
\end{equation}
see also~\cite[Section~2]{Karpukhin-Nadirashvili-Penskoi-Polterovich:2022:existence}.
The positivity condition $V > 0$ can be relaxed to $V \geq 0$, since the inequality
$\ind V \leq k$ is preserved under weak$^*$  limits of measures. In fact, it
was shown in~\cite{Grigoryan-Nadirashvili-Sire:2016:eigenval-schrodinger} that one may
dispense with the condition $V \geq 0$ entirely and take the supremum over
all smooth potentials. This observation was later used in~\cite{Karpukhin-Nadirashvili-Penskoi-Polterovich:2022:existence}
to prove the existence
and regularity of maximizers for $\mathcal{V}_k(g)$ in dimension $m=2$.

In~\cite{Grigoryan-Netrusov-Yau:2004:eignval-of-ellipt-oper}
Grigor’yan, Netrusov and Yau established upper bounds for a broad class of
eigenvalue optimization problems on metric measure spaces.
In particular, they proved (see~\cite[Remark~5.10]{Grigoryan-Netrusov-Yau:2004:eignval-of-ellipt-oper})
that
\begin{equation}
  \mathcal{V}_k(g) \leq Ck^\frac{2}{m} \Vol(g)^{1-\frac{2}{m}} < \infty.
\end{equation}

Karpukhin and
Stern~\cite*{Karpukhin-Stern:2024:harm-map-in-higher-dim} recently proved
that for dimensions
$3 \leq m \leq 5$, maximizers
for $\mathcal{V}_1(g)$ exist and are realized by smooth,
sphere-valued harmonic maps. In this paper, we extend their result by
proving the existence of maximizers for all
$k$ and in all dimensions. Furthermore, we show that the maximizers are
smooth when
$3 \leq m \leq 6$ (see Corollary~\ref{cor:smooth-maximizers} below).
\begin{theorem}\label{thm:main}
  Let $(M,g)$ be a closed connected Riemannian manifold of dimension $m \geq 3$, and let $k\geq 1$.
  Then, for some finite-dimensional sphere $\Sph^n$, there exists a harmonic map $u \in H^1(M,\Sph^n)$ such that
  $u \in C^\infty(M\setminus\Sigma, \Sph^n)$, where $\Sigma$ is a closed set of Hausdorff dimension at most $m-7$, and
  \begin{equation}
    \lambda_k(\abs{du}^2_g dv_g) = 1,
    \qquad
    \mathcal{V}_k(g) = \int_M \abs{du}^2_g dv_g.
  \end{equation}
\end{theorem}
The equality $\lambda_k(|du|^2_g dv_g) = 1$ implies that $\ind (\Delta_g - |du|^2_g) \leq k$.
Every weakly harmonic map of finite (spectral) index is locally stable and even locally energy-minimizing when regarded as a map
  into $\Sph^\infty$ (see Definition~\ref{def:stable-map}, Lemma~\ref{lem:stable-energy-min}, and
  the proof of Proposition~\ref{prop:con-bilin-form}). Moreover, in the present closed connected setting, Corollary~\ref{cor:finite-sphere-map} shows that
the image of a finite-index weakly harmonic map into $\Sph^\infty$ lies in a finite-dimensional subsphere of $\Sph^\infty$. Here
$\Sph^\infty = \Sph(\ell^2)$ denotes
the unit sphere in the Hilbert space of square-summable sequences. See Section~\ref{sec:harm-maps}
for all necessary definitions concerning harmonic maps.
\begin{remark}
  As shown in Proposition~\ref{prop:no-atoms}, the maximizing sequence exhibits
  no concentration of energy — i.e., no bubbling — unlike in the case of eigenvalue maximization
  in dimension $m = 2$.
\end{remark}
As a consequence of Theorem~\ref{thm:main}, we extend \cite*[Theorem~1.5]{Karpukhin-Stern:2024:harm-map-in-higher-dim} to all $k \geq 1$ and to
the additional dimension $m=6$.
\begin{corollary}\label{cor:smooth-maximizers}
  If $3 \leq m \leq 6$,
  the supremum $\mathcal{V}_k(g)$ is realized by a smooth sphere-valued harmonic map.
\end{corollary}
Thus, at least in dimensions $3 \leq m \leq 6$, every closed manifold admits
infinitely many smooth harmonic maps into spheres, each of which
maximizes some eigenvalue functional $\overline{\lambda}_k$.
In contrast, Theorem~\ref{thm:2d-eigen-sphere-index} provides singular maximizers in every dimension $m \geq 7$.

\begin{remark}
  An analogue of Theorem~\ref{thm:main}
  holds in the context of Steklov eigenvalue optimization -- i.e.,
  when $\partial M \neq \varnothing$ and $\overline{\lambda}_k$ is maximized
  over measures supported on the boundary, $\supp \mu \subset \partial M$.
  See Theorem~\ref{thm:main-steklov} for the precise statement.
  The main remaining question in this setting concerns the regularity
  of maximizers.
\end{remark}

\subsection{Maximizing harmonic maps on spheres}
Let $(\Sph^m, g_{\Sph^m})$ denote the $m$-dimensional unit sphere with the
standard round metric, where $m\geq 3$. In \cite*[Theorem~1.6]{Karpukhin-Stern:2024:harm-map-in-higher-dim},
Karpukhin and Stern computed the supremum of the first eigenvalue over measures
on $\Sph^m$:
\begin{equation}\label{eq:V_1-sphere}
  \mathcal{V}_1(g_{\Sph^m}) = m \sigma_m
\end{equation}
where $\sigma_m = \operatorname{Vol} (g_{\Sph^m})$ is the volume of the round sphere. The standard volume measure
$dv_{g_{\Sph^m}}$, which corresponds to the harmonic map
$\id \colon \Sph^m \to \Sph^m$, is the unique maximizer for $\mathcal{V}_1(g_{\Sph^m})$.
The value of $\mathcal{V}_1(g_{\Sph^2}) = \Lambda_1([g_{\Sph^2}])$ was first computed by Hersch~\cite*{Hersch:1970:sphere-inequality}.

From~\eqref{eq:V_1-sphere}, we see that the maximizing harmonic map -- the identity map $\id_{\Sph^m}$ --
is smooth even if
$m \geq 7$. However, this is no longer the case for higher eigenvalues and
singularities do occur.
\begin{definition}
  Let $0\leq k\leq m-3$. We refer to the map $\SHM{k} \colon \Sph^m \to \Sph^{m-1-k}$
  defined by
  \begin{equation}
    \SHM{k}\colon (x,y) \mapsto \frac{x}{\abs{x}},
    \quad\text{where } (x,y) \in \Sph^{m}\subset \R^{m-k}\times\R^{k+1},
  \end{equation}
  as a \emph{generalized equator map}. It is a smooth map away from
  $\brc{0}\times\Sph^k$. Note that for $k = 0$,
  this is simply the radial projection onto the equator.
\end{definition}

Set $n := m - 1 - k$, so $n \geq 2$. In the coordinates $\Sph^{n} \times (0,1)\times \Sph^{k} \to \Sph^m$,
\begin{equation}\label{sec:calc-on-sphere}
  (\theta, t, \omega) \mapsto \brr{\theta\sqrt{1-t}, \omega\sqrt{t}},
\end{equation}
$\SHM{k}$ is given by
\begin{equation}
  \SHM{k}\colon (\theta, t, \omega) \mapsto \theta.
\end{equation}

In these coordinates, $g_{\Sph^m} = (1-t) g_{\theta} + \frac{1}{4}\frac{dt^2}{t(1-t)} + t g_{\omega}$, so
\begin{equation}
  \Delta_{g_{\Sph^m}} \SHM{k} = \frac{\Delta_{\theta} (\id_{\theta})}{1-t}
  = \frac{\abs{d(\id_{\theta})}^2_{\theta}}{1-t}
  = \abs{d \SHM{k}}^2\SHM{k},
\end{equation}
since the identity
$\id_{\theta} \colon \Sph^{n}\to \Sph^{n}$ is harmonic,
with $\abs{d(\id_{\theta})}^2_{\theta} = n$.
The energy of $\SHM{k}$ equals
\begin{equation}\label{eq:equator-energy}
  \begin{split}
    E[\SHM{k}] &= \int \abs{d\SHM{k}}^2 dv_{g_{\Sph^m}} = n
    \tfrac{\sigma_{n}\sigma_{k}}{2}\int_{0}^1 (1-t)^{\frac{n-1}{2}-1}t^{\frac{k-1}{2}}dt
    \\ &= n \tfrac{\sigma_{n}\sigma_{k}}{2} B(\tfrac{n-1}{2}, \tfrac{k+1}{2})
    = n \tfrac{n+k}{n-1} \brs{\tfrac{\sigma_{n}\sigma_{k}}{2} B(\tfrac{n+1}{2}, \tfrac{k+1}{2})}
    \\ &=  \tfrac{n(m-1)}{n-1}  \sigma_{m},
  \end{split}
\end{equation}
where $B(x,y)$ is the beta function.
Thus, $\SHM{k} \in H^1(\Sph^m,\Sph^{n})$ is a weakly harmonic map, whose
singularity set is precisely $\brc{0}\times\Sph^k$.

\begin{theorem}\label{thm:2d-eigen-sphere-upper-bound}
  Let $m \geq 3$ and $0\leq k\leq m-3$. For any
  (nonnegative) continuous measure $\mu \in \mathcal{M}_c(\Sph^m)$,
  one has
  \begin{equation}
    \overline{\lambda}_{k+2}(\mu) \leq E[\SHM{k}].
  \end{equation}
\end{theorem}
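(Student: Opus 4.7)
The plan is to prove the bound via the min--max characterisation~\eqref{meas-eigenval}: produce a $(k+3)$-dimensional subspace $F \subset H^1(\Sph^m)$ on which every Rayleigh quotient $R(\phi) = \int |d\phi|_g^2\,dv_g / \int \phi^2\,d\mu$ is at most $E[\SHM{k}]/\mu(\Sph^m)$, and conclude $\overline{\lambda}_{k+2}(\mu) \leq E[\SHM{k}]$.

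\paragraph{Test family and Hersch centering.} The natural candidates are the $m-k$ coordinate functions of the equator map $\SHM{k}\colon\Sph^m\to\Sph^{m-1-k}$, possibly post-composed with a Möbius transformation $\Psi$ of the target sphere. They satisfy two aggregate identities: $\sum_i (\Psi\circ\SHM{k})_i^2 \equiv 1$, which yields $\sum_i \int (\Psi\circ\SHM{k})_i^2\,d\mu = \mu(\Sph^m)$, and, together with~\eqref{eq:equator-energy}, $\sum_i \int |d(\Psi\circ\SHM{k})_i|^2\,dv_g = E[\Psi\circ\SHM{k}]$. A Hersch--Szegö degree argument on the parameter ball $\overline{B^{m-k}}$ of target Möbius transformations --- whose boundary extension to the identity is guaranteed by the atomless hypothesis $\mu\in\mathcal{M}_c$ --- picks a $\Psi$ for which each centred coordinate $u_i=(\Psi\circ\SHM{k})_i$ is orthogonal to the constants in $L^2(\mu)$.

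\paragraph{Dimension count.} With this centering, $\{1, u_1, \ldots, u_{m-k}\}$ spans an $(m-k+1)$-dimensional subspace of $H^1(\Sph^m)$, which is a valid competitor in~\eqref{meas-eigenval} for $\lambda_{m-k}(\mu)$. When $k\leq(m-2)/2$ one has $m-k+1\geq k+3$, so this already bounds $\lambda_{k+2}(\mu) \leq \lambda_{m-k}(\mu)$ from above. In the complementary range $k > (m-2)/2$ the subspace is not yet large enough and must be enriched with further test functions orthogonal to the constants, most naturally the coordinates of the dual projection $\Sph^m\to\Sph^k$ combined with a radial function of the axis parameter $t$; a second degree-theoretic centering is then required to maintain joint orthogonality.

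\paragraph{Main obstacle.} The key technical difficulty is two-fold. First, the Dirichlet energy $E[\Psi\circ\SHM{k}]$ generally differs from $E[\SHM{k}]$ in dimension $m\geq 3$ (unlike the conformally invariant two-dimensional case), so one must choose $\Psi$ carefully, average over target transformations, or otherwise demonstrate that the energy inequality goes in the correct direction. Second, even granting the aggregate bound $\sum_i\int|du_i|^2 \leq E[\SHM{k}]$, passing to the \emph{supremum} bound on the Rayleigh quotient across $F$ requires more than a trace estimate on the pair $(A,B)$ with $A_{ij}=\int du_i\cdot du_j\,dv_g$ and $B_{ij}=\int u_iu_j\,d\mu$: it demands near-proportionality $A\preceq (E[\SHM{k}]/\mu(\Sph^m))\,B$, which in turn requires fully exploiting the identity $\sum u_i^2\equiv 1$ and the equivariance of the Hersch centering, so that all generalised eigenvalues --- not merely their average --- are controlled by the same constant. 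Resolving these two points uniformly for all $k\leq m-3$ is the essential content of the theorem.
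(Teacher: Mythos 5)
Your proposal identifies several of the right ingredients --- coordinates of the generalized equator map, a M\"obius centering, a degree-theoretic existence argument --- but it omits the two devices that make the paper's proof actually close, and the ``main obstacle'' you flag in your final paragraph is a symptom of that gap rather than the essential content of the theorem.

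\textbf{The sum trick replaces the sup-over-subspace estimate.} The paper does not attempt to bound the supremum of the Rayleigh quotient over a $(k+3)$-dimensional subspace, so the problem of establishing a matrix inequality $A \preceq c\,B$ never arises. Instead, after rescaling so that $\lambda_{k+2}(\mu)=1$, it invokes Proposition~\ref{prop:eigen-upper-bound}: there exist $\phi_1,\dots,\phi_{k+1}$ such that every $\psi$ that is $L^2(\mu)$-orthogonal to $1,\phi_1,\dots,\phi_{k+1}$ satisfies $\int \psi^2\,d\mu \leq \int |d\psi|^2\,dv_g$. Summing this scalar inequality over the coordinates $\Psi^i$ of a sphere-valued test map and using $\sum_i(\Psi^i)^2\equiv 1$ gives $\mu(\Sph^m)\leq \int|d\Psi|^2$ directly; no spectral comparison of Gram matrices is needed. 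This also renders your ``dimension count'' paragraph moot: the size of the test family relative to $k+3$ is irrelevant once one only needs orthogonality to the first $k+2$ eigenspaces, and your suggested enrichment with a dual projection has no obvious way to produce the right orthogonalities.

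\textbf{The Grassmannian degree argument produces the missing orthogonalities.} Your Hersch centering addresses only orthogonality to the constants, but Proposition~\ref{prop:eigen-upper-bound} requires orthogonality to $\phi_1,\dots,\phi_{k+1}$ as well. The paper varies two parameters: the $(m-k)$-plane $\pi\in \Gr_{m-k}(\R^{m+1})$ selecting the equator map $\SHM{\pi}$, and the M\"obius centre $p$. For each $\pi$, Laugesen's uniqueness and continuity theorem gives a continuously varying $p(\pi)\in\pi\cap\mathbb{B}^{m+1}$ achieving mass-centering. Then $\pi\mapsto \bigoplus_{i=1}^{k+1}\int \Psi_{\pi,p(\pi)}\,\phi_i\,d\mu$ is a continuous section of $(k+1)\gamma$ over $\Gr_{m-k}(\R^{m+1})$, and Lemma~\ref{lem:top-grassmannian-degree} (the non-vanishing of $w_{m-k}(\gamma)^{k+1}$) forces a zero. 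This is where all $k+1$ extra orthogonalities come from simultaneously, and it is the piece your proposal has no counterpart for.

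\textbf{The energy of the recentered map.} Your ``first obstacle'' is real but has a clean resolution: because $p(\pi)\in\pi$, the M\"obius map $T_{p(\pi)}$ preserves $\pi\cap\Sph^m\approx\Sph^n$, so the energy factorizes as $E[T_{p_0}\circ\SHM{\pi}] = \tfrac{E[\SHM{k}]}{n\sigma_n}\int_{\Sph^n}|dT_{p_0}|^2$, and conformality of $T_{p_0}|_{\Sph^n}$ together with H\"older's inequality yields $\tfrac{1}{n\sigma_n}\int_{\Sph^n}|dT_{p_0}|^2\leq 1$. This is an $n$-dimensional conformal-volume estimate, not the two-dimensional conformal invariance, and it closes the bound. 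So the rough shape of your strategy is compatible with the paper, but it is not a proof until the raw min--max is replaced by Proposition~\ref{prop:eigen-upper-bound} with the coordinate sum, the Grassmannian/Stiefel--Whitney step is added, and the H\"older step on $\Sph^n$ is made explicit.
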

\begin{remark}
    The case of equality implies $\lambda_{k+2}(\mu)\mu = \abs{d(T\circ\SHM{k})}^2_{g_{\Sph^m}} dv_{g_{\Sph^m}}$, where
    $T = \id_{\Sph^n}$ if $n > 2$ and $T$ is a conformal automorphism of $\Sph^2$ if $n =2$.
  Therefore, the upper bound is not optimal when $k > m - 7$.

  The reason the map $\SHM{k}$ fails to be a maximizer for $k \geq m - 6$
  is that $\ind \SHM{k} := \ind (\Delta - \abs{d\SHM{k}}^2) = \infty$ in this case.  To see this, let
  us assume for simplicity that $3\leq m \leq 6$ and $k=0$.
  Then the essential spectrum of the Laplacian $\Delta$ on
  $L^2(\Sph^m, \abs{d\SHM{0}}^2dv_{g_{\Sph^m}})$
  begins no later
  than at $\frac{(m-2)^2}{4(m-1)} < 1$ for $m\leq 6$. This can be seen by analyzing the blow-up of
  $\SHM{0}$ near a pole so that
  $\epsilon^{m-2} (M_\epsilon)_*(\abs{d\SHM{0}}^2 dv_{g_{\Sph^m}})
  \to \frac{m-1}{\abs{x}^2}dv_{g_{\R^m}}$ on $\R^m$, where $M_\epsilon(x) = x/\epsilon$.
  Then, applying
  \cite[Lemma~1.3]{Schoen-Uhlenbeck:1984:min-harm-maps}, one obtains the lower bound
  $\frac{(m-2)^2}{4(m-1)}$, which is realized by pulling back suitable
  compactly supported radial test functions from $\R^m$.
  Thus, the (spectral) index of $\SHM{0}$ is infinite.
\end{remark}
\begin{theorem}\label{thm:2d-eigen-sphere-index}
  Let $\SHM{k}\colon\Sph^{m}\to\Sph^{m-1-k}$ be a generalized equator map and $0\leq k \leq m - 7$.
  Then $\ind \SHM{k} = k+2$, so the map $\SHM{k}$ is the unique maximizer
  realizing $\mathcal{V}_{k+2}(g_{\Sph^m})$.
\end{theorem}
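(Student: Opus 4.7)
The plan is to separate variables on $\Sph^m \setminus (\brc{0}\times\Sph^k)$, identified with $\Sph^{n} \times (0,1) \times \Sph^{k}$ via the parameterization in~\eqref{sec:calc-on-sphere}, and then apply Theorem~\ref{thm:2d-eigen-sphere-upper-bound} to deduce both the maximizing and uniqueness statements from the identity $\ind \SHM{k} = k + 2$. Because $\abs{d\SHM{k}}^2 = n/(1-t)$ depends only on $t$, the quadratic form of $\Delta - \abs{d\SHM{k}}^2$ decouples under the decomposition $\phi = \sum_{p,q} Y_p(\theta) Z_q(\omega) f_{p,q}(t)$ into spherical harmonics of degrees $p$ on $\Sph^n$ and $q$ on $\Sph^k$; the resulting weighted 1D operators $L_{p,q}$ on $(0,1)$ carry the effective potential
\[
  V_{p,q}(t) = \frac{(p-1)(p+n)}{1-t} + \frac{q(q+k-1)}{t}.
\]
For $p \geq 2$ both terms are strictly positive and for $p = 1$ the operator is non-negative, with kernel in the sector $(p,q)=(1,0)$ exactly accounting for the $n+1 = m-k$ coordinate functions of $\SHM{k}$ in $\ker(\Delta - \abs{d\SHM{k}}^2)$. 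Hence $\ind \SHM{k}$ reduces to the count of negative eigenvalues of the sectors $p = 0$, weighted by the dimension of degree-$q$ spherical harmonics on $\Sph^k$.

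For the lower bound $\ind \SHM{k} \geq k+2$, I exhibit $k+2$ explicit negative directions: the constant $1$, whose quadratic form is $-E[\SHM{k}] < 0$, and the $k+1$ coordinate restrictions $y_j = \omega_j\sqrt{t}$ of $\Sph^m \hookrightarrow \R^{m-k}\times\R^{k+1}$, each lying in the $(0,1)$ sector. Since $y_j$ is a first spherical harmonic on $\Sph^m$ with $\Delta y_j = m y_j$, the beta-function identity $\int y_j^2/(1-t)\,dv = \frac{m+1}{n-1}\int y_j^2\,dv$ gives
\[
  \int \brr{\abs{dy_j}^2 - \abs{d\SHM{k}}^2 y_j^2}\,dv = \brr{m - \tfrac{n(m+1)}{n-1}} \int y_j^2\,dv = -\tfrac{m+n}{n-1} \int y_j^2\,dv < 0,
\]
and the vanishing of the cross-terms of this form on $\mathrm{span}\brc{1, y_1, \ldots, y_{k+1}}$ (by the evenness of $\abs{d\SHM{k}}^2$ in $\omega$) establishes negative-definiteness on this $(k+2)$-dimensional subspace.

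The matching upper bound $\ind \SHM{k} \leq k + 2$ is the main technical obstacle, and this is where the hypothesis $k \leq m - 7$, i.e.\ $n \geq 6$, enters. Near the singular set, the change of variable $r = \sqrt{1-t}$ models the transverse geometry on $\R^{n+1}$ and the potential as $-n/r^2$; the sharp Hardy inequality $\int_{\R^{n+1}} \abs{\nabla f}^2 \geq \frac{(n-1)^2}{4}\int f^2/r^2$ yields a lower bound for each $L_{0,q}$ precisely when $(n-1)^2/4 > n$, i.e.\ $n^2 - 6n + 1 > 0$, which holds if and only if $n \geq 6$. This bounds $L_{0,q}$ from below and confines its negative spectrum to a finite union of discrete eigenvalues. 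The precise count then requires a direct 1D analysis: the Liouville substitution $f_{0,q}(t) = (1-t)^{a_+} t^{b_+} h(t)$, with $a_+ = \tfrac14\brr{(1-n) + \sqrt{n^2 - 6n + 1}}$ the Frobenius exponent at $t = 1$ selected by the finite-energy condition and $b_+$ its analogue at $t = 0$, conjugates $L_{0,q}$ to a Jacobi-type operator whose discrete negative spectrum can be enumerated explicitly; one finds that only the ground states of $L_{0,0}$ and $L_{0,1}$ are negative, while $L_{0,q} \geq 0$ for $q \geq 2$ because of the additional repulsive contribution $(q-1)(q+k)/t$. Weighting by the spherical harmonic multiplicities $1$ and $k+1$, this yields $\ind \SHM{k} \leq 1 + (k+1) = k+2$.

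The identity $\ind \SHM{k} = k + 2$ is equivalent to $\lambda_{k+1}(\abs{d\SHM{k}}^2 dv_g) < 1 \leq \lambda_{k+2}(\abs{d\SHM{k}}^2 dv_g)$. Theorem~\ref{thm:2d-eigen-sphere-upper-bound} applied to the continuous measure $\abs{d\SHM{k}}^2 dv_g$ gives the opposite inequality $\lambda_{k+2}(\abs{d\SHM{k}}^2 dv_g) \leq 1$, so $\lambda_{k+2}(\abs{d\SHM{k}}^2 dv_g) = 1$ and hence $\overline{\lambda}_{k+2}(\abs{d\SHM{k}}^2 dv_g) = E[\SHM{k}] = \mathcal{V}_{k+2}(g_{\Sph^m})$. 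Finally, the equality case of Theorem~\ref{thm:2d-eigen-sphere-upper-bound} recorded in the preceding remark forces any competing continuous maximizing measure $\mu$ to satisfy $\mu \propto \abs{d\SHM{k}}^2 dv_g$, establishing uniqueness.
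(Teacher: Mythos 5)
Your approach is structurally the same as the paper's: separate variables on $\Sph^n\times I\times\Sph^k$, observe that only the $p=0$ sectors can be negative, and conjugate the resulting 1D operators by the Frobenius exponents $(1-t)^{a_+}t^{b_+}$ (indeed $a_+ = -\alpha_-/2$ where $\alpha_-$ is the paper's root of $\alpha^2-(n-1)\alpha+n=0$) to reach a Jacobi-type operator. Your explicit lower bound via the $(k+2)$-dimensional negative subspace $\mathrm{span}\brc{1, y_1,\dots,y_{k+1}}$ is correct (the beta-function identity $\int y_j^2/(1-t)\,dv = \tfrac{m+1}{n-1}\int y_j^2\,dv$ checks out, and the cross-terms vanish by parity) and is a nice elementary complement, not present in the paper because the paper computes the exact count in one stroke.

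The gap lies exactly where the theorem's technical content sits. Having conjugated $L_{0,q}$ to a Jacobi-type operator, you write that its ``discrete negative spectrum can be enumerated explicitly; one finds that only the ground states of $L_{0,0}$ and $L_{0,1}$ are negative.'' This is the claim to be established, not a step in its proof. The paper carries out precisely this enumeration: identifies $\mathrm{Spec}(S_F)=\brc{s(s+a+b+1)}_{s\in\mathbb{N}}$ from the Jacobi polynomials, computes the threshold $L$ in terms of $\alpha,\ell$, shows that the counting inequality reduces to $s<\tfrac{\alpha-\ell}{2}$, and then uses $\alpha_-\in(1,2]$ to conclude the count is $1$ for $\ell\in\brc{0,1}$ and $0$ for $\ell\ge 2$. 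Without this computation, $\ind\SHM{k}\le k+2$ is unproved. Your supporting remark that ``$L_{0,q}\ge 0$ for $q\ge 2$ because of the additional repulsive contribution $(q-1)(q+k)/t$'' does not close this: adding a nonnegative potential to $L_{0,1}$ gives $L_{0,q}\ge L_{0,1}$, which is compatible with $L_{0,q}$ still having a negative eigenvalue. The Hardy heuristic likewise only explains boundedness below, not the count.

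Two smaller issues. First, the chart $\Sph^n\times(0,1)\times\Sph^k$ covers $\Sph^m$ minus \emph{both} $\brc{0}\times\Sph^k$ and $\Sph^n\times\brc{0}$; for $k\ge 1$ the latter has codimension $k+1\ge 2$, hence capacity zero, so restricting to $C^\infty_0$ of the chart loses nothing, but for $k=0$ it is a codimension-one equator and the density fails. The paper handles $k=0$ separately with the parameterization $(\theta,t)\mapsto(\theta\sqrt{1-t^2},t)$ precisely for this reason; your argument does not address this case. Second, your reduction from the index identity to the maximization and uniqueness statements is fine, but note that $\ind\SHM{k}=k+2$ gives $\lambda_{k+2}(\mu)\geq 1$ directly (not via the intermediate inequality chain you wrote), after which Theorem~\ref{thm:2d-eigen-sphere-upper-bound} and its equality case give the rest, exactly as you conclude.
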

Thus, $\SHM{k}$ is indeed a maximizer for $\mathcal{V}_{k+2}(g_{\Sph^m})$
when $0\leq k \leq m - 7$, and one cannot always expect the
smoothness of maximizers in Theorem~\ref{thm:main}.

In Corollary~\ref{cor:sph-dim-restric}, we establish a new necessary condition
for maximizing measures, which can in fact be shown to hold for critical
measures as well. As a consequence, the identity map
$\id \colon \Sph^m \to \Sph^m$
cannot realize $\mathcal{V}_2(g_{\Sph^m})$
since it is impossible to map $\Sph^m \to \Sph^{m-1}$ using only linear functions.

On the other hand, for $3\leq m \leq 6$, we know that there exists a smooth harmonic
map $u \colon \Sph^m \to \Sph^n$ realizing $\mathcal{V}_2(g_{\Sph^m})$. Its spectral index must be
$\ind u \leq 2$.
Since the identity map maximizes $\overline{\lambda}_1$ but not
$\overline{\lambda}_2$, any maximizer of $\overline{\lambda}_2$ must have spectral
index exactly $\ind u = 2$. Unfortunately, we do not know any natural
example of a harmonic map with
$\ind u = 2$. This leads to the following question:
\begin{question}
  Which smooth harmonic maps $u \colon \Sph^m \to \Sph^n$, with
  (spectral) index $\ind u = 2$,
  realize $\mathcal{V}_2(g_{\Sph^m})$
  for $3\leq m \leq 6$?
\end{question}

It is also natural to compare $\mathcal{V}_{k}(g_{\Sph^m})$ with the conformal class suprema
$\Lambda_k([g_{\Sph^m}])$, where
\begin{equation}\label{conf-class-opt}
    \Lambda_k([g]) = \sup_{\tilde{g} \in [g]} \overline{\lambda}_k(\tilde{g}),
  \quad \text{with } \overline{\lambda}_k(\tilde{g}) := \lambda_k(\tilde{g}) \Vol(\tilde{g})^{\frac{2}{m}}.
\end{equation}
El Soufi and Ilias \cite*{ElSoufi-Ilias:1986:hersch} computed
$\Lambda_1([g_{\Sph^m}])\sigma_m^{1-2/m} = m \sigma_m = \mathcal{V}_1(g_{\Sph^m})$,
thereby generalizing the classical result of Hersch for $m = 2$.
Thanks to~\cite*{Kim:2022:second-sphere-eigenval}, the supremum of the second eigenvalue in the conformal class
$[g_{\Sph^m}]$ is also known: $\Lambda_2([g_{\Sph^m}]) = m (2\sigma_m)^{\frac{2}{m}}$.
This yields the inequalities
\begin{equation}
  \mathcal{V}_{1}(g_{\Sph^m}) < \mathcal{V}_{2}(g_{\Sph^m}) < \Lambda_2([g_{\Sph^m}])\sigma_m^{1-\frac{2}{m}}.
\end{equation}

\subsection{Ideas of the proof of existence}

Recall that the existence of maximizers
on surfaces can be established by constructing a
maximizing sequence of probability measures $\mu_\epsilon$, converging
weakly$^*$ to some limiting measure $\mu$, i.e.
$\lambda_\epsilon := \lambda_k(\mu_\epsilon) \to \mathcal{V}_k$, together with
eigenmaps
$\Phi_\epsilon \in H^1(M,\R^{n_\epsilon})$ satisfying
$\Delta \Phi_\epsilon = \lambda_\epsilon \Phi_\epsilon \mu_\epsilon$ and
$\abs{\Phi_\epsilon}^2 \to 1$.
In dimension $m = 2$, it is possible to uniformly bound $n_\epsilon \leq n$
(the multiplicity is bounded by the genus). Then any weak limit
$\Phi$ of $\brc{\Phi_\epsilon}$ satisfies
$\abs{\Phi}^2 = 1$ by the compactness of
the embedding $H^1(M,\R^n) \hookrightarrow  L^2(M,\R^n)$.
One can further show that
$\Phi$ is a harmonic map and that the continuous part of $\mu$ is equal to
$\abs{d\Phi}^2dv_g$. Thus, modulo some bubble analysis near the atoms of $\mu$,
the proof is complete.

In dimensions $m \geq 3$, however, there is no known uniform bound on $n_\epsilon$.
A natural idea is to embed $\R^{n_\epsilon} \subset \R^\infty = \ell^2$
and attempt to carry out the same strategy. However, the embedding
$H^1(M,\R^\infty) \hookrightarrow  L^2(M,\R^\infty)$ is no longer
compact, so one cannot expect the equation $\abs{\Phi}^2 = 1$ to hold in the limit,
which is essential for the proof.

Instead of working directly in $H^1(M,\R^\infty)$, one may consider a different
functional space in which the limiting object $\Phi$
shares the property $\abs{\Phi}^2 = 1$. Such a space is suggested by viewing eigenvalues as
Lipschitz functionals on the space of compact
operators on a suitable Hilbert space,
$\lambda_k \colon \mathcal{K}_{sa}[H] \to \R$. The first variation of $\lambda_k$
then lies in the dual $\mathcal{K}_{sa}[H]^* \approx \mathcal{N}_{sa}[H]$, which is known
to be the space of nuclear (i.e. trace-class) operators. This motivates working with
operators (or tensors) $\mathfrak{t}_\epsilon = \sum \phi^i_\epsilon \otimes \phi^i_\epsilon \in \mathcal{N}[H^1(M)]$ rather than with the vector-valued map
$\Phi = (\phi^1_\epsilon,\cdots,\phi^{n_\epsilon}_\epsilon)$. Fortunately, the weak$^*$ topology of $\mathcal{N}[H^1(M)]$
is strong enough to ensure $\sum \abs{\phi^i}^2 = 1$ for any weak$^*$ limit
$\mathfrak{t} = \sum \phi^i \otimes \phi^i$ of $\brc{\mathfrak{t}_\epsilon}$.
Then Lemma~\ref{lem:tensor-decomp} allows us to adapt the remainder
of the proof as in the two-dimensional case
(cf.~\cite{vinokurov:2025:sym-eigen-val}).

\subsection{Discussion on other eigenvalue optimizations}
Traditionally,
the geometric optimization problem consists of identifying the suprema
$\Lambda_k([g])$~\eqref{conf-class-opt}. As shown below,
both $\mathcal{V}_k(g)$ and $\Lambda_k([g])$ are the borderline cases
of the following eigenvalue maximization problems parametrized
by $q \in [\frac{m}{m-2}, \infty]$.

Let us define
\begin{equation}
 \lambda_k(\mu, \sigma, g) := \inf_{F_{k+1}} \sup_{\phi \in F_{k+1}}
 \frac{\int \abs{d\phi}^2_g \sigma dv_g}{\int \phi^2 d\mu},
\end{equation}
and then
\begin{equation}\label{general-eigenval}
  \mathcal{V}_{k,q}(g) := \sup_{\substack{\mu, \sigma \in C^\infty\\ \mu, \sigma > 0}}
  \lambda_k(\mu, \sigma, g) \frac{\mu(M)}{\norm{\sigma}_{L^q}}.
\end{equation}
When $q < \frac{m}{m-2}$, the supremum $\mathcal{V}_{k,q}(g)$ becomes infinite.
This can be seen by choosing $\mu_\epsilon = \sigma_\epsilon =
\chi_{B_\epsilon(p)}$ to be the characteristic functions of a geodesic
ball. The infimum also becomes arbitrarily small, e.g., by fixing $\sigma = 1$
and letting $\mu_\epsilon \to \sum_{i=0}^{k} \delta_{p_i}$.

For $q = \infty$, note that $\frac{1}{\norm{\sigma}_{L^\infty}} \lambda_k(\mu, \sigma, g)
  \leq \lambda_k(\mu, g)$
since $\frac{\sigma}{\norm{\sigma}_{L^\infty}}\leq 1$. Hence, taking
$\sigma = 1$, we obtain the identity $\mathcal{V}_{k}(g) = \mathcal{V}_{k,\infty}(g)$.
Assuming $\Vol(g) = 1$, the Hölder inequality yields
\begin{equation}
  \mathcal{V}_{k}(g) = \mathcal{V}_{k,\infty}(g)
  \leq \mathcal{V}_{k,q}(g) \leq \mathcal{V}_{k,\frac{m}{m-2}}(g)
  \leq C([g])k^\frac{2}{m},
\end{equation}
where the final bound follows from
\cite[Theorem~2.1]{Colbois-ElSoufi-Savo:2015:-laplace-with-density}
(cf. also \cite[Theorem~5.3]{Grigoryan-Netrusov-Yau:2004:eignval-of-ellipt-oper}
with $\int_{B(x,r)} \sigma \leq C\norm{\sigma}_{L^{\frac{m}{m-2}}} r^2$).

In analogy with Section~\ref{subsec:clarke-subdif} (see also
\cite[Section~5]{Petrides-Tewodrose:2024:eigenvalue-via-clarke}),
the subdifferential
calculus shows that critical pairs $(\sigma, \mu)$ correspond
to $p$-harmonic maps $u \colon M \to \Sph^n$, where $p = \frac{2q}{q-1}$,
satisfying
\begin{equation}\label{p-crit-meas}
  \sigma = \abs{du}^{p-2}_g,\ \mu = \abs{du}^p_g dv_g,\
  \text{ and }\ \delta_g(\abs{du}_g^{p-2} du) = \abs{du}^p_g u.
\end{equation}

To recover $\Lambda_k([g])$, one may set $q = \frac{m}{m-2}$ and
$\mu = \sigma^{\frac{m}{m-2}} dv_g$ in~\eqref{general-eigenval}, which gives
\begin{equation}
  \Lambda_k([g]) \leq \mathcal{V}_{k,\frac{m}{m-2}}(g).
\end{equation}
However, as the critical point characterization~\eqref{p-crit-meas} suggests,
both $\Lambda_k([g])$ and $\mathcal{V}_{k,\frac{m}{m-2}}(g)$
share the same set of critical measures -- those generated by $m$-harmonic
maps into spheres. Therefore, if $\mathcal{V}_{k,\frac{m}{m-2}}(g)$
is achieved by such a map, one should expect
$\mathcal{V}_{k,\frac{m}{m-2}}(g) = \Lambda_k([g])$.
This leads to the following natural questions:
\begin{question}
  For $q \in [\frac{m}{m-2}, \infty)$ and $p = \frac{2q}{q-1}$,
  does there exist a $p$-harmonic map that realizes $\mathcal{V}_{k,q}(g)$?
\end{question}
A more specific case, formally implied by the preceding question, is the following:
\begin{question}\label{quest-m-2-equal}
  Does the equality $\mathcal{V}_{k,\frac{m}{m-2}}(g) = \Lambda_k([g])$ hold?
\end{question}

In this generalized setting, the quadratic form
$\phi \mapsto \int \abs{d\phi}^2 \sigma dv_g$ and
$\phi \mapsto \int \phi^2 d\mu$ are naturally defined on
$H^{1,p}(M)$. While one can still construct maximizing sequences
of “almost $p$-harmonic” tensors $\mathfrak{t}_\epsilon = \sum \phi^i_\epsilon \otimes \phi^i_\epsilon$,
the analysis must be carried out in the stronger norm $\norm{\mathfrak{t}_\epsilon}_p^{p/2} :=
\int [\mathfrak{t}_\epsilon]^\frac{p}{2} + \int \mathfrak{D}[\mathfrak{t}_\epsilon]^\frac{p}{2}$,
which dominates the nuclear norm $\norm{\mathfrak{t}_\epsilon}_{\mathcal{N}[H^1]}$, see~\eqref{eq:nucler-top-H1}.
The presence of a nontrivial density $\sigma$, which can be zero at some points,
further complicates establishing strong
convergence $\mathfrak{t}_\epsilon \to \mathfrak{t}$ in the norm $\norm{\cdot}_p$.

\subsection{Acknowledgements}
This paper forms part of the author's PhD thesis under the supervision
of Mikhail Karpukhin and Iosif Polterovich. The author is grateful to
both for their valuable guidance and many fruitful discussions.
In particular, the author thanks Mikhail Karpukhin for suggesting the
problem. This work was partially supported by an ISM Scholarship.

\section{Preliminaries}
Throughout the paper, we adopt the convention that $C, c, c_1, c_2, \cdots$ are
some positive constants. The dependencies
will be specified if necessary. $\Omega$ always stands for a (bounded) open set in $M$.
The manifold $M$ will be assumed to be closed and of dimension $m \geq 3$
unless specified otherwise.
By $\epsilon \in \brc{\epsilon_i}$ we
denote a sequence of positive numbers, and $o(1)$ denotes
any sequence such that
$o(1) \to 0$ as $\epsilon \to 0$.

\subsection{Eigenvalues of measures}
In~\cite[Proposition~1.1]{Kokarev:2014:measure-eigenval},
Kokarev proved the upper semi-continuity of $\lambda_k(\mu)$ on surfaces,
but one easily sees that the argument does not actually depend on the dimension of $M$.
\begin{proposition}\label{prop:upper-cont-and-bound}
  The functional $\mu \mapsto \lambda_k(\mu)$ is weakly$^*$ upper semi-continuous
  on $\mathcal{M}(M)$, i.e.
  \begin{equation}
    \limsup_n \lambda_k(\mu_n) \leq \lambda_k(\mu),
    \quad\text{where } \mu_n \overset{w^*}{\to} \mu \in \mathcal{M}(M).
  \end{equation}
\end{proposition}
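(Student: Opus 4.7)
The plan is to run the standard min-max argument, using a nearly optimal test subspace for $\mu$ and then transferring it to the $\mu_n$ by weak$^*$ convergence. If $\lambda_k(\mu) = +\infty$ there is nothing to prove, so I may assume $\lambda_k(\mu) < +\infty$ and fix an arbitrary $\epsilon > 0$. Choose a $(k+1)$-dimensional subspace $F \subset C^\infty(M)$ with
\begin{equation}
  \sup_{\phi \in F} \frac{\int \abs{d\phi}^2_g\, dv_g}{\int \phi^2\, d\mu} \leq \lambda_k(\mu) + \epsilon.
\end{equation}
Since the supremum over $F$ is finite, every nonzero $\phi \in F$ satisfies $\int \phi^2\, d\mu > 0$.

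Next I fix a basis $\phi_1, \dots, \phi_{k+1}$ of $F$ and consider the symmetric matrices
\begin{equation}
  A := \Bigl( \int \brt{d\phi_i, d\phi_j}_g\, dv_g \Bigr)_{i,j},
  \qquad
  G_n := \Bigl( \int \phi_i \phi_j\, d\mu_n \Bigr)_{i,j},
  \qquad
  G := \Bigl( \int \phi_i \phi_j\, d\mu \Bigr)_{i,j}.
\end{equation}
Each product $\phi_i \phi_j$ lies in $C(M)$, so weak$^*$ convergence $\mu_n \overset{w^*}{\to} \mu$ yields $G_n \to G$ entrywise. By the previous paragraph $G$ is positive definite, hence $G_n$ is positive definite for all $n$ large enough. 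In particular, $F$ injects into $L^2(M, \mu_n)$ as a $(k+1)$-dimensional subspace for such $n$, and it is therefore an admissible competitor in the variational characterization of $\lambda_k(\mu_n)$.

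Writing $\phi = \sum c_i \phi_i$ turns the Rayleigh quotient on $F$ into a generalized Rayleigh quotient on $\R^{k+1}$:
\begin{equation}
  \sup_{0 \neq \phi \in F} \frac{\int \abs{d\phi}^2_g\, dv_g}{\int \phi^2\, d\mu_n}
  = \max_{0 \neq c \in \R^{k+1}} \frac{c^{\top} A c}{c^{\top} G_n c},
\end{equation}
which is the largest root of $\det(A - \lambda G_n) = 0$. Since $G_n \to G > 0$, this largest generalized eigenvalue depends continuously on $G_n$, so
\begin{equation}
  \limsup_n \lambda_k(\mu_n)
  \leq \limsup_n \max_c \frac{c^{\top} A c}{c^{\top} G_n c}
  = \max_c \frac{c^{\top} A c}{c^{\top} G c}
  \leq \lambda_k(\mu) + \epsilon.
\end{equation}
Letting $\epsilon \to 0$ finishes the argument.

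The only point that requires any care is ensuring that the test subspace $F$ stays admissible for $\mu_n$, i.e.\ that it remains $(k+1)$-dimensional in $L^2(M, \mu_n)$; this is precisely what the positive definiteness of the limiting Gram matrix $G$ gives, and it is the reason one first chooses $F$ with \emph{finite} Rayleigh quotient with respect to $\mu$. Everything else reduces to the continuous dependence of generalized eigenvalues on the Gram matrix in a fixed finite-dimensional space, so no specifically two-dimensional feature of $M$ enters the proof, confirming Kokarev's observation.
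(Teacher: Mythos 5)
Your proof is correct. The paper does not reproduce the argument — it simply cites Kokarev's Proposition~1.1 and remarks that the argument is dimension-independent — and what you have written is precisely that standard min-max argument: fix a near-optimal finite-dimensional test space $F$ for $\mu$, observe that finiteness of the Rayleigh quotient forces the Gram matrix $G$ of $F$ in $L^2(\mu)$ to be positive definite, use weak$^*$ convergence on the continuous products $\phi_i\phi_j$ to get $G_n \to G$ entrywise, and conclude by continuity of the largest generalized eigenvalue of the pencil $(A, G_n)$ on the open set of positive-definite $G_n$. The observation that only continuity of the test functions (not any two-dimensional feature of $M$) enters the argument is exactly the point the paper is making when it says the result extends to all dimensions.
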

\noindent For an open set $\Omega \subset M$ and a \emph{signed} Radon measure $\nu$, let us define the index
$\ind(\nu, \Omega) \in [0,\infty]$ as
\begin{equation}\label{eq:index}
  \ind(\nu, \Omega) =  \sup \brc{\dim V\colon V \subset C_{0}^\infty(\Omega)
  \text{ s.t. }Q_\nu|_{V\setminus\brc{0}} < 0},
\end{equation}
where
\begin{equation}
  Q_\nu(\phi) = \int \abs{d\phi}^2 \dv{g} - \int \phi^2 d\nu.
\end{equation}
We write $\ind \nu$ for $\ind (\nu, M)$. If
$\Omega' \subset \Omega$, it is clear that
$\ind(\nu, \Omega') \leq \ind(\nu, \Omega)$.
Following the ideas from~\cite[Section~4.1]{Karpukhin-Nadirashvili-Penskoi-Polterovich:2022:existence},
see also~\cite[Section~3.2]{Karpukhin-Nahon-Polterovich-Stern:2021:stability},
let us introduce the following definition.
\begin{definition}\label{def:stable}
  A point $p\in M$ is said to be \emph{stable}
  for the sequence of measures
  $\brc{\nu_\epsilon}$
  if there exists a neighborhood $\Omega$ of $p$ such that, up to a subsequence,
  one has
  \begin{equation}\label{good.pt}
    \ind(\nu_{\epsilon},\Omega) = 0.
  \end{equation}
  The point $p$ is called \emph{unstable} otherwise.
\end{definition}
The same definition also makes sense for a single measure $\nu$ (one can think
of the constant sequence $\nu_\epsilon \equiv \nu$).
\begin{remark}\label{rem:size_of-bad}
Suppose that $\ind \nu_\epsilon \leq k$ for every $\epsilon$. Then there are at most $k$
distinct \emph{unstable} points. Otherwise,
we could find $k+1$ disjoint open sets $\Omega_p\cap \Omega_q=\varnothing$ on which
$$
\ind(\nu_{\epsilon},\Omega_p) \neq 0.
$$
This contradicts the bound $\ind \nu_\epsilon \leq k$.
\end{remark}
\begin{remark}\label{rem:quadr-form-bound}
  After passing to the subsequence in Definition~\ref{def:stable}, one has
  \begin{equation}\label{ineq:quadr-form-bound}
    \int \phi^2 d\nu_\epsilon \le
    \int \abs{d\phi}^2 dv_g \quad \forall \phi\in C_{0}^{\infty}(\Omega).
  \end{equation}
  In particular, when $\nu_\epsilon \geq 0$, the measures
  $\brc{\nu_\epsilon}$ induce uniformly bounded bilinear forms on $H^1_{0}(\Omega)$.
\end{remark}
\begin{lemma}\label{lem:unstable-ring}
  Let $p \in M$ be an unstable point for $\brc{\nu_\epsilon}$ and
  $\ind \nu_\epsilon \leq k$. Then up to a subsequence, there exist a neighborhood
  $\Omega$ of $p$ and a sequence $r_\epsilon \to 0$ such that
  \begin{equation}
    \ind(\nu_{\epsilon},\Omega\setminus \overline{B_{r_\epsilon}(p)}) = 0.
  \end{equation}
\end{lemma}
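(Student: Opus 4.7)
I plan to argue by contradiction: assuming the conclusion of the lemma fails, I will manufacture $k+1$ compactly supported functions with pairwise disjoint supports, each rendering $Q_{\mu_\epsilon}$ negative. This forces $\ind \mu_\epsilon \geq k+1$, and hence $\lambda_k(\mu_\epsilon) < 1$ by the variational characterization, contradicting the hypothesis $\lambda_k(\mu_\epsilon) \equiv 1$ (cf.~Remark~\ref{rem:size_of-bad}).

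\textbf{Critical radius.} Fix $R>0$ small, set $\Omega_0 := B_R(p)$, and for each $\epsilon$ define the critical inner radius
\begin{equation*}
  \rho_\epsilon := \sup\set{r \in [0, R]}{\ind(\mu_\epsilon, \Omega_0 \setminus \overline{B_r(p)}) = 0}.
\end{equation*}
Since $\ind$ is monotone under domain inclusion, $\ind(\mu_\epsilon, \Omega_0 \setminus \overline{B_r(p)})$ vanishes for $r > \rho_\epsilon$ and is at least $1$ for $r < \rho_\epsilon$. In particular, the lemma's conclusion for this choice of $\Omega$ is \emph{equivalent} to $\rho_\epsilon \to 0$ along a subsequence (then any sequence with $\rho_\epsilon < r_\epsilon \to 0$ witnesses the claim).

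\textbf{Iteration.} Suppose the lemma fails. Then $\liminf_\epsilon \rho_\epsilon > 0$, so along a subsequence $\rho_\epsilon \geq 2\alpha_1$ for some $\alpha_1 > 0$, yielding $\phi_1^\epsilon \in C_0^\infty(\Omega_0 \setminus \overline{B_{\alpha_1}(p)})$ with $Q_{\mu_\epsilon}(\phi_1^\epsilon) < 0$. Now apply the same analysis with $\Omega_1 := B_{\alpha_1}(p)$ in place of $\Omega_0$: the lemma must fail for $\Omega_1$ as well (otherwise the proof would already be complete), so an identical argument produces $\alpha_2 \in (0, \alpha_1)$ together with $\phi_2^\epsilon \in C_0^\infty(\Omega_1 \setminus \overline{B_{\alpha_2}(p)})$, $Q_{\mu_\epsilon}(\phi_2^\epsilon) < 0$, along a further subsequence. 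Iterating $k+1$ times (extracting a subsequence at each step) I obtain radii $R =: \alpha_0 > \alpha_1 > \cdots > \alpha_{k+1} > 0$ and, for every $j = 1, \ldots, k+1$, functions $\phi_j^\epsilon \in C_0^\infty(B_{\alpha_{j-1}}(p) \setminus \overline{B_{\alpha_j}(p)})$ with $Q_{\mu_\epsilon}(\phi_j^\epsilon) < 0$.

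\textbf{Conclusion and main subtlety.} The annuli $B_{\alpha_{j-1}}(p) \setminus \overline{B_{\alpha_j}(p)}$ are pairwise disjoint, so the $\phi_j^\epsilon$ have disjoint supports and span a $(k+1)$-dimensional subspace of $C_0^\infty(\Omega_0)$ on which $Q_{\mu_\epsilon}$ is negative definite. Therefore $\ind \mu_\epsilon \geq k+1$, forcing $\lambda_k(\mu_\epsilon) < 1$ and contradicting $\lambda_k(\mu_\epsilon) \equiv 1$. The argument is essentially combinatorial; the only real difficulty is the bookkeeping of the $k+1$ successive subsequences, which is harmless since the iteration terminates after finitely many steps. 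The instability hypothesis on $p$ is what prevents the process from collapsing trivially — had $p$ been stable, the lemma would already hold with a stable neighborhood $\Omega$ and any $r_\epsilon \to 0$.
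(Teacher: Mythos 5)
Your argument is correct and, modulo one slip, is essentially the paper's argument: iteratively shrink the reference neighborhood, producing pairwise disjoint annuli each carrying a negative direction for $Q_{\mu_\epsilon}$, and conclude that $k+1$ such annuli would force $\ind\mu_\epsilon \geq k+1$, contradicting $\lambda_k(\mu_\epsilon)\equiv 1$. The slip: since $r \mapsto \ind\bigl(\mu_\epsilon, \Omega_0 \setminus \overline{B_r(p)}\bigr)$ is non-increasing in $r$, the set $\{r \in [0,R] : \ind(\mu_\epsilon, \Omega_0 \setminus \overline{B_r(p)}) = 0\}$ is upward-closed and always contains $R$, so its \emph{supremum} is identically $R$ and your definition of $\rho_\epsilon$ is vacuous as written; you want the \emph{infimum} (this is exactly the paper's $r_\epsilon := \inf A_\epsilon$). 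Everything that follows your definition — the description of the threshold behavior, the equivalence of the lemma's conclusion with $\liminf\rho_\epsilon=0$, and the whole iteration — is consistent with the infimum, so this reads as a typo rather than a conceptual gap.
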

\begin{proof}
  Let $B_{r_0} = B_{r_0}(p)$ and $\Omega := B_{r_0}$ be a neighborhood of $p$ not containing the other unstable points.
  Consider the set
  \begin{equation}
    A_\epsilon = \set{r\in (0,r_0)}{\ind(\nu_{\epsilon},\Omega\setminus \overline{B_r}) = 0}.
  \end{equation}
  Since $\ind \nu_\epsilon \leq k$, the set $A_\epsilon$ is not empty.
  Set $r_\epsilon := \inf A_\epsilon$ and notice that $\ind(\nu_{\epsilon},\Omega\setminus \overline{B_{r_\epsilon}}) = 0$.
  If $\liminf_\epsilon r_\epsilon \geq 2r_1$ for some $r_1 > 0$,
  then $\ind(\nu_{\epsilon},\Omega\setminus \overline{B_{r_1}}) \neq 0$, and
  after resetting $\Omega := B_{r_1}$, one can repeat the reasoning above until
  we get $k+1$ disjoint open sets $U_i := B_{r_i}\setminus\overline{B_{r_{i+1}}}$ such that
  $\ind(\nu_{\epsilon},U_i) \neq 0$ contradicting $\ind \nu_\epsilon \leq k$.
  Thus, we should have a subsequence $r_\epsilon \to 0$ at some iteration step.
\end{proof}
The preceding argument also yields a useful lower form bound for signed non-atomic Radon measures.
\begin{proposition}\label{prop:con-bilin-form}
  Let $\nu$ be a signed non-atomic Radon measure with $\ind \nu < \infty$.
  Then there exists a constant $C > 0$ such that
  \begin{equation}\label{ineq:meas-bound}
    \int \phi^2 d\nu \leq \int \abs{d\phi}^2dv_g + C \int \phi^2 dv_g
    \quad \forall \phi \in C^\infty(M).
  \end{equation}
  In particular, if $\mu \in \mathcal{M}_c(M)$ and $\lambda_k(\mu) > 0$ for some $k > 0$, then $\mu$ defines a continuous bilinear form
  on $H^1(M)$.
\end{proposition}
\begin{proof}
Let us show that for a constant sequence $\nu_\epsilon \equiv \nu$, every point of $M$
 is stable. Pick an unstable point $p \in M$ and apply Lemma~\ref{lem:unstable-ring}
 to the constant sequence $\nu_\epsilon \equiv \nu$. For some neighborhood
 $\Omega$ of $p$, we obtain
 \begin{equation}
  0 = \ind (\nu, \Omega \setminus \brc{p}) = \ind (\nu, \Omega),
 \end{equation}
 where the second equality follows from the non-atomicity of $\nu$ and the fact that
 $\brc{p}$ has zero capacity. Thus, every point $p$ has a neighborhood $U_p$ where \eqref{ineq:quadr-form-bound} holds.
 Then cover $M$ with finitely many such $U_i$ and take a partition of unity with $\sum_i \eta_i^2 = 1$ and $\supp \eta_i \subset U_i$. For any $\phi \in C^\infty_0(M)$,
 inequality \eqref{ineq:quadr-form-bound} yields
  \begin{equation}
      \int (\phi\eta_i)^2 d\nu \leq \int\abs{d\phi}^2\eta_i^2dv_g
      + \frac{1}{2} \int \brt{d\eta_i^2, d\phi^2}dv_g + \int\phi^2\abs{d\eta_i}^2dv_g.
  \end{equation}
After summing over $i$, the mixed terms vanish, and we arrive at \eqref{ineq:meas-bound}.

 For the last statement, since $\lambda_k(\mu) \in (0,\infty)$, rescale $\mu : = \lambda_k(\mu) \mu$.
 Then $\ind \mu \leq k < \infty$, and inequality~\eqref{ineq:meas-bound} holds. Together with the positivity of $\mu$, this yields
 a bilinear form bound on $H^1$.
\end{proof}

Let $\mathcal{B}[H^1]$ be the space of continuous bilinear forms on
$H^1 = H^1(M)$.
As noted above, we may restrict our attention to those measures that
belong to $\mathcal{B}[H^1]$, and we will do so from now on.
Consider the Hilbert space $X$,
\begin{equation}
  X = \set*{\phi \in H^1(M)}{\int \phi dv_g = 0},
  \quad
  \brr{\phi, \psi}_X := \int \brt{d\phi, d\psi} dv_g,
\end{equation}
and the $\mu$-average
\begin{equation}
  [\phi]_\mu := \frac{1}{\mu(M)}\int \phi d\mu.
\end{equation}
Since $\int \brr{\phi - [\phi]_\mu}^2 d\mu + \mu(M)[\phi]_\mu^2 = \int \phi^2 d\mu \geq
\int \brr{\phi - [\phi]_\mu}^2 d\mu$, we observe that
for $k \geq 1$, one has
\begin{equation}\label{eq:variat-character-reduced}
  \lambda_k(\mu) = \inf_{
    F_k}
    \sup_{\phi \in F_k\setminus\brc{0}}
    \frac{\int\abs{d\phi}^2 dv_g}{\int \brr{\phi - [\phi]_\mu}^2 d\mu},
\end{equation}
where $F_k \subset C^\infty \cap X$ with $\dim F_k = k$.
The Poincaré inequality easily
implies that the quadratic form $\phi \mapsto
\int \brr{\phi - [\phi]_\mu}^2 d\mu$ belongs to $\mathcal{B}[X]$ as long as
$\mu\in \mathcal{B}[H^1]$.
Thus, taking the inverses in \eqref{eq:variat-character-reduced}, we conclude
\begin{equation}\label{eq:variat-character-reversed}
  \lambda_k^{-1}(\mu) = \sup_{
    F_k \subset X}
    \inf_{\phi \in F_k\setminus\brc{0}}
    \frac{\int \brr{\phi - [\phi]_\mu}^2 d\mu}{\norm{\phi}_X^2}.
\end{equation}

The idea of viewing measures as bilinear forms on $H^1(M)$ in the context of eigenvalue optimization problems was first
introduced in~\cite{Petrides:2024:conf-class-opt, Petrides:2024:spec-optim-on-surfaces}.
In this work, however, we choose to work instead with bilinear forms on $X$,
which allows us to interpret the variational eigenvalues
$\lambda_k^{-1}(\mu)$ as the upper part of the spectrum of the bounded
linear operator associated with the form $\phi \mapsto \int \brr{\phi - [\phi]_\mu}^2 d\mu$.

\begin{proposition}\label{prop:eigen-upper-bound}
  Let $\mu \in \mathcal{M}_c(M)$, $k > 0$, and $\lambda_{k}(\mu) > 0$. Then
  there exist $k$ functions, $1 = \phi_0, \phi_1, \cdots, \phi_{k-1} \in H^1(M)$ (some
  of which may be equal to zero),
  satisfying $\Delta \phi_i  = \lambda_i \phi_i \mu$, with $\lambda_i < \lambda_k = \lambda_{k}(\mu)$,
  such that
  \begin{equation}\label{othog-inf-var-desc}
    \lambda_{k}(\mu) = \inf \set*{\frac{\int \abs{d\phi}^2_g dv_g}{\int \phi^2 d\mu}}
    {\phi \in H^1(M),\ \int \phi \phi_i d\mu = 0\ \forall \lambda_i < \lambda_k}.
  \end{equation}
  The infimum is attained only on the eigenfunctions $\Delta \phi = \lambda_k(\mu) \phi \mu$.
\end{proposition}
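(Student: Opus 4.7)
The plan is to reformulate $\lambda_k(\mu)^{-1}$ as a min--max value of a single bounded positive self-adjoint operator on $X$, and then obtain the orthogonality characterization \eqref{othog-inf-var-desc} by the standard min--max (Courant--Hilbert) principle. Concretely, Proposition~\ref{prop:con-bilin-form} together with the Poincaré inequality show that the bilinear form
\begin{equation*}
    q_\mu(\phi, \psi) := \int (\phi - [\phi]_\mu)(\psi - [\psi]_\mu)\, d\mu
\end{equation*}
is continuous on $X \times X$, so Riesz representation produces a bounded positive self-adjoint operator $T_\mu \colon X \to X$ with $\langle T_\mu \phi, \psi\rangle_X = q_\mu(\phi, \psi)$. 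The reversed characterization \eqref{eq:variat-character-reversed} then identifies the numbers $\lambda_j(\mu)^{-1}$, $j \geq 1$, with the $j$-th min--max values of $T_\mu$.

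For each $j$ with $0 < j < k$ and $\lambda_j(\mu) < \lambda_k(\mu)$, the strict inequality $\lambda_j^{-1} > \lambda_k^{-1} \geq \sup \sigma_{\mathrm{ess}}(T_\mu)$ forces $\lambda_j^{-1}$ to be an isolated eigenvalue of $T_\mu$ of finite multiplicity, and so yields a $T_\mu$-eigenvector $\phi_j \in X$. After adding a constant to arrange $[\phi_j]_\mu = 0$, the equation $T_\mu \phi_j = \lambda_j^{-1} \phi_j$ unfolds, by testing against arbitrary $\psi \in C^\infty(M)$, into the weak equation $\Delta \phi_j = \lambda_j \phi_j \mu$. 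For the remaining $j < k$ with $\lambda_j(\mu) = \lambda_k(\mu)$ I would set $\phi_j := 0$, while $\phi_0 := 1$ accounts for the zero eigenvalue. The identity
\begin{equation*}
    \int \phi\, \phi_j\, d\mu \;=\; q_\mu(\phi, \phi_j) \;=\; \lambda_j^{-1} \langle \phi, \phi_j\rangle_X,
    \qquad [\phi]_\mu = [\phi_j]_\mu = 0,\; j \geq 1,
\end{equation*}
then converts $\mu$-orthogonality into $X$-orthogonality. Since the constraint $\int \phi\, \phi_0\, d\mu = 0$ is simply $[\phi]_\mu = 0$ — and once this is imposed, switching from $\phi$ to its $X$-representative $\tilde\phi = \phi - [\phi]_{dv_g}$ does not alter the Rayleigh quotient — the right-hand side of \eqref{othog-inf-var-desc} becomes exactly the standard max--min expression for $\lambda_k(\mu)^{-1}$ on the $X$-orthogonal complement of the already-chosen eigenvectors. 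Attainment happens precisely when $\lambda_k^{-1}$ is itself an eigenvalue of $T_\mu$, in which case the minimizers span the corresponding eigenspace and satisfy $\Delta \phi = \lambda_k(\mu) \phi \mu$.

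The main subtleties I anticipate are twofold. The first is the careful translation between the constraint space $\{\phi \in H^1 : \int \phi\, d\mu = 0\}$ used in \eqref{othog-inf-var-desc} and the operator-theoretic base $X = \{\int \phi\, dv_g = 0\}$; these differ by a real translation but are in bijection via $\tilde\phi = \phi - [\phi]_{dv_g}$, and one must verify that this bijection preserves both the Rayleigh quotient (through the identity $\int \phi^2 d\mu = q_\mu(\tilde\phi, \tilde\phi)$ when $[\phi]_\mu = 0$) and the orthogonality conditions (once $[\phi_j]_\mu = 0$ is normalized). The second is multiplicity bookkeeping: allowing $\phi_j = 0$ whenever $\lambda_j(\mu) = \lambda_k(\mu)$ is essential so that the Courant--Hilbert enumeration of eigenvalues of $T_\mu$ above $\sup \sigma_{\mathrm{ess}}(T_\mu)$ agrees with the variational indexing $\lambda_1(\mu) \leq \lambda_2(\mu) \leq \cdots$.
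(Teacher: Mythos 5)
Your proposal is correct and takes essentially the same route as the paper: both pass from $\lambda_k(\mu)$ to the bounded self-adjoint operator $T(\mu)$ on $X$ via the reversed characterization, invoke the standard min--max/Courant--Fischer theory to produce the finitely many eigenvalues $\lambda_j^{-1} > \lambda_k^{-1}$ above the essential spectrum with their eigenvectors, normalize by $[\phi_j]_\mu = 0$ so that the eigenequation becomes $\Delta\phi_j = \lambda_j \phi_j \mu$ on all of $H^1$, and then translate $X$-orthogonality into $\mu$-orthogonality. The bookkeeping detail you flag (setting $\phi_j = 0$ when $\lambda_j = \lambda_k$) and the translation between the base points $\{[\phi]_\mu = 0\}$ and $X$ match what the paper's shorter argument implicitly relies on.
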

\begin{proof}
  By Proposition~\ref{prop:con-bilin-form}, $\mu \in \mathcal{B}[H^1]$. In that case,
  we saw in \eqref{eq:variat-character-reversed} that $\lambda_k^{-1}$ is actually
  associated with a (self-adjoint) bounded linear operator on $X$, and
  the order $k$ means that the spectrum above $\lambda_k^{-1}$ is discrete and
  consists of at most $k-1$ eigenvalues
  (counted with multiplicities),
  $\lambda^{-1}_i > \lambda_k^{-1}$, e.g., see~\cite*[Section~4.5]{Davies:1995:spectral-theory}.
  Being an eigenvalue $\lambda^{-1}_i$ means there exists an eigenfunction
  $\tilde{\phi}_i \in X$ such that
  \begin{equation}
    \int (\tilde{\phi}_i - [\tilde{\phi}_i]_\mu)(\psi - [\psi]_\mu) d\mu
     = \lambda^{-1}_i \int \brt{d\tilde{\phi}_i, d\psi} dv_g
  \end{equation}
  for all $\psi \in X$. By setting $\phi_i = \tilde{\phi}_i - [\tilde{\phi}_i]_\mu$,
  one can check that $\Delta \phi_i = \lambda_i \phi_i \mu$ weakly on all
  $H^1$. Then the variation description \eqref{othog-inf-var-desc} is just
  a restatement of the well-known variational characterization
  in the case of the eigenvalues of a bounded operator:
  \begin{equation}
    \lambda^{-1}_{k}(\mu) = \sup \set*{\frac{\int (\tilde{\phi} - [\tilde{\phi}]_\mu)^2 d\mu}{\int \abs{d\tilde{\phi}}^2_g dv_g}}
    {\tilde{\phi} \in X,\ \int \brt{d\tilde{\phi}, d\tilde{\phi}_i}dv_g = 0\ \forall \lambda_i < \lambda_k}.
  \end{equation}
\end{proof}
Slightly abusing the notation in the next lemma, we write $\phi \mapsto \int \phi d\mu$
for functionals even if they are not induced by a measure. The idea is that
we can pick, e.g., any element $\mu \in H^{1,\frac{m}{m-1}}(M)^*$ such that
$\brt{\mu, 1} \neq 0$ and define
\begin{equation}
  \mathfrak{q}(\mu)[\phi] =  \int(\phi - [\phi]_\mu)^2 d\mu.
\end{equation}
\begin{lemma}\label{lem:sobolev-dual-meas}
  Let $m \geq 3$ and $U = \set*{\mu \in \brr{H^{1,\frac{m}{m-1}}}^*}{\brt{\mu, 1} \neq 0}$.
  Then the map $\mu \mapsto \mathfrak{q}(\mu)$ is
  continuously Fréchet differentiable as a map
  $U \to \mathcal{B}[X]$. Its derivative
  at $\mu$ is given by
  \begin{equation}
    d_\mu \mathfrak{q}(\nu) =  \int \brr{\phi - [\phi]_\mu}\brr{\psi - [\psi]_\mu} d \nu,
  \end{equation}
  where $\nu \in \brr{H^{1,\frac{m}{m-1}}}^*$.
  The bounded linear operator $T(\mu) \in \mathcal{L}[X]$
  induced by $\mathfrak{q}$ is self-adjoint and compact.
\end{lemma}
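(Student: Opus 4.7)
\emph{Well-definedness.} First I verify that $\mathfrak{q}(\mu) \in \mathcal{B}[X]$. Since $m \geq 3$, the Sobolev embedding $H^1(M) \hookrightarrow L^{\frac{2m}{m-2}}(M)$ holds, and combined with Hölder it yields a continuous multiplication $H^1 \times H^1 \to H^{1,\frac{m}{m-1}}$:
\begin{equation}
\norm{\phi\psi}_{L^{\frac{m}{m-1}}} + \norm{\phi\,d\psi}_{L^{\frac{m}{m-1}}} \leq C\norm{\phi}_{H^1}\norm{\psi}_{H^1}.
\end{equation}
Since $H^1 \hookrightarrow H^{1,\frac{m}{m-1}}$ on the closed manifold $M$, for $\mu \in U$ the average $[\phi]_\mu$ is a bounded linear functional of $\phi \in H^1$; hence $(\phi,\psi) \mapsto (\phi - [\phi]_\mu)(\psi - [\psi]_\mu)$ maps $X \times X$ continuously into $H^{1,\frac{m}{m-1}}$. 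Pairing against $\mu$ then delivers $\mathfrak{q}(\mu) \in \mathcal{B}[X]$, with operator norm controlled by $\norm{\mu}_{*}$ and $1/|\brt{\mu,1}|$.

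\emph{Derivative and self-adjointness.} Next, I expand the square to obtain the closed form
\begin{equation}
\mathfrak{q}(\mu)[\phi,\psi] = \brt{\mu,\phi\psi} - \frac{\brt{\mu,\phi}\brt{\mu,\psi}}{\brt{\mu,1}},
\end{equation}
a smooth rational expression in $\mu$ on $U$ (its denominator is nonzero there). Direct differentiation gives
\begin{equation}
d_\mu\mathfrak{q}(\nu)[\phi,\psi] = \brt{\nu,(\phi-[\phi]_\mu)(\psi-[\psi]_\mu)},
\end{equation}
and continuity (indeed local Lipschitzness) of $\mu \mapsto d_\mu\mathfrak{q}$ in operator norm follows from the continuous dependence of $[\,\cdot\,]_\mu$ on $\mu$. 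The manifest symmetry $\mathfrak{q}(\mu)[\phi,\psi] = \mathfrak{q}(\mu)[\psi,\phi]$ then yields self-adjointness of $T(\mu)$ with respect to $(\cdot,\cdot)_X$.

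\emph{Compactness.} The main obstacle is compactness of $T(\mu)$, which I plan to handle by smooth approximation. Since $H^{1,\frac{m}{m-1}}(M)$ is reflexive and $C^\infty(M)$ is dense in it, a Hahn-Banach argument shows that smooth functions (identified as functionals via integration against $dv_g$) are norm-dense in $\brr{H^{1,\frac{m}{m-1}}}^{*}$; I may therefore choose smooth $\mu_\delta \in C^\infty(M) \cap U$ with $\mu_\delta \to \mu$ in norm. Local Lipschitzness from the derivative bound gives $T(\mu_\delta) \to T(\mu)$ in $\mathcal{L}[X]$. For each bounded smooth $\mu_\delta$, the function $(\phi - [\phi]_{\mu_\delta})\mu_\delta$ lies in $L^2(M)$ with norm controlled by $\norm{\phi}_{H^1}$ and with vanishing integral, so elliptic regularity yields a bounded map $T(\mu_\delta)\colon X \to H^2(M)$; composing with the compact inclusion $H^2 \hookrightarrow H^1$ makes $T(\mu_\delta)$ compact on $X$. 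Closedness of the compact operators in $\mathcal{L}[X]$ then transfers compactness to the limit $T(\mu)$. The delicate ingredient here is the norm-density of smooth functions in $\brr{H^{1,\frac{m}{m-1}}}^{*}$, which crucially uses reflexivity of $H^{1,\frac{m}{m-1}}$ rather than mere strong convergence of a mollifier.
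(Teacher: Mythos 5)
Your proof is correct, and the well-definedness and differentiability parts mirror the paper's proof (the paper also notes that $\mathfrak{q}(\mu)$ depends rationally on the bilinear expressions $\langle\mu,\phi\psi\rangle$ and invokes the estimate $|\int\phi\psi\,d\mu| \lesssim \norm{\phi}_{H^1}\norm{\psi}_{H^1}\norm{\mu}_{*}$, quoting it from Girouard--Karpukhin--Lagac\'e rather than rederiving it from Sobolev and H\"older as you do). Where you take a genuinely different route is the compactness argument. The paper approximates $\mu$ by $L^\infty$ densities $\rho\,dv_g$ and observes that for such densities $T(\mu)$ factors (up to a rank-one correction coming from the $\mu$-means) through the compact embedding $i\colon X\hookrightarrow L^2$, essentially $T(\mu)=i^*M_\rho i$ plus a rank-one term, which is compact. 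You instead approximate by smooth densities, write $\Delta f = (\phi-[\phi]_{\mu_\delta})\rho_\delta$ with zero-mean right-hand side, and use elliptic regularity to gain two derivatives, composing with the compact inclusion $H^2\hookrightarrow H^1$. Both work; the paper's factorization is more economical, requiring only compactness of $X\hookrightarrow L^2$ and no elliptic estimates, while your version is perhaps more self-contained for a reader used to PDE regularity arguments and has the small advantage that the correction from the means is absorbed automatically rather than needing to be identified as a finite-rank perturbation. Your density step (reflexivity plus Hahn--Banach) is also heavier machinery than the paper uses, which simply records without proof that $L^\infty$ is dense in $\brr{H^{1,\frac{m}{m-1}}}^*$; both are standard. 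A cosmetic remark: in the displayed bilinear estimate you should also include $\norm{\psi\,d\phi}_{L^{m/(m-1)}}$ on the left, since $d(\phi\psi)=\phi\,d\psi+\psi\,d\phi$, though the missing term is controlled by the same H\"older argument by symmetry.
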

\begin{proof}
  Notice that the quadratic form
  \begin{equation}
    \int \brr{\phi - [\phi]_\mu}^2 d\mu = \int \phi^2 d\mu -
    \brr{\frac{1}{\brt{\mu, 1}}\int \phi d\mu}^2
  \end{equation}
  depends rationally on the expressions of the form
  $\int \phi\psi d\mu$, where $\phi,\psi \in H^1$. So, it is enough
  to prove the (strong) continuity of $\mu \mapsto \int \phi\psi d\mu$,
  which follows from the estimate, see~\cite[Lemma~4.9]{Girouard-Karpukhin-Lagace:2021:continuity-of-eigenval},
  \begin{equation}
    \abs{\int \phi\psi d\mu} \leq
    C \norm{\phi}_{H^1} \norm{\psi}_{H^1} \norm{\mu}_{\brr{H^{1,\frac{m}{m-1}}}^*}
  \end{equation}
  and the fact that $H^1$-norm is an equivalent
  norm on $X$ by the Poincaré inequality.

  The compactness of $T(\mu)$ follows from the facts that the
  images of $L^\infty$
  measures are dense in $\brr{H^{1,\frac{m}{m-1}}}^*$, and for
  $\mu = \rho \dv{g} \in L^\infty$, one has
  $T(\mu) = i^* M_\rho i$, where
  $i\colon X \hookrightarrow  L^2$ is
  compact.
\end{proof}
In what follows, the notation $\mathcal{M}\cap L^p$ refers to the set of nonnegative,
nonzero measures in $L^p$.
\begin{proposition}
  The following equalities hold:
  \begin{equation}
    \mathcal{V}_k(g)
    = \sup_{\mu \in \mathcal{M}\cap L^\infty} \overline{\lambda}_k(\mu)
    = \sup_{\mu \in \mathcal{M}\cap L^1} \overline{\lambda}_k(\mu).
  \end{equation}
\end{proposition}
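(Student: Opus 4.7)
The chain of inclusions $C^\infty_{>0}(M)\cdot dv_g \subset \mathcal{M}\cap L^\infty \subset \mathcal{M}\cap L^1$ already gives the two easy inequalities $\mathcal{V}_k(g) \leq \sup_{L^\infty\cap\mathcal{M}} \overline{\lambda}_k \leq \sup_{L^1\cap\mathcal{M}} \overline{\lambda}_k$, so only the two reverse inequalities need to be established, and I would treat them by two different approximation procedures.

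\emph{From $L^1$ to $L^\infty$ by pointwise truncation.}  Given any $\mu = f\,dv_g \in \mathcal{M}\cap L^1$, I would set $\mu_N := \min(f,N)\,dv_g \in \mathcal{M}\cap L^\infty$.  Since $\mu_N \leq \mu$ pointwise, the Rayleigh quotient in~\eqref{meas-eigenval} only increases when $\mu$ is replaced by $\mu_N$, so $\lambda_k(\mu_N) \geq \lambda_k(\mu)$ for every $k$.  Monotone convergence gives $\mu_N(M) \to \mu(M)$, hence $\liminf_N \overline{\lambda}_k(\mu_N) \geq \overline{\lambda}_k(\mu)$, proving $\sup_{L^\infty\cap\mathcal{M}} \overline{\lambda}_k \geq \overline{\lambda}_k(\mu)$ for any $\mu\in\mathcal{M}\cap L^1$.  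This step is routine.

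\emph{From $L^\infty$ to $C^\infty_{>0}$ by mollification.}  Given $\mu = f\,dv_g \in \mathcal{M}\cap L^\infty$, I would take $\mu_\epsilon := (f\ast\rho_\epsilon + \epsilon)\,dv_g$ using a standard local mollifier (or the heat semigroup $f\mapsto e^{\epsilon\Delta}f$).  Each $\mu_\epsilon$ is smooth and strictly positive, hence admissible in the definition of $\mathcal{V}_k(g)$, and $\mu_\epsilon \to \mu$ in $L^p$ for every $p<\infty$.  Since $m\geq 3$, Sobolev duality produces the continuous embedding $L^{m/2} \hookrightarrow (H^{1,m/(m-1)}(M))^*$ dual to $H^{1,m/(m-1)} \hookrightarrow L^{m/(m-2)}$, so choosing $p = m/2$ gives $\mu_\epsilon \to \mu$ in the Sobolev dual norm.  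Lemma~\ref{lem:sobolev-dual-meas} then yields operator-norm convergence $T(\mu_\epsilon) \to T(\mu)$ of the associated compact self-adjoint operators on $X$; since the top eigenvalues of such operators vary continuously in operator norm (min--max), one obtains $\lambda_k(\mu_\epsilon) \to \lambda_k(\mu)$.  Pairing the dual convergence against the constant function $1 \in H^{1,m/(m-1)}$ also gives $\mu_\epsilon(M) \to \mu(M)$, so $\overline{\lambda}_k(\mu_\epsilon) \to \overline{\lambda}_k(\mu)$, whence $\mathcal{V}_k(g) \geq \overline{\lambda}_k(\mu)$ for every $\mu\in\mathcal{M}\cap L^\infty$.

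The main obstacle is this second step: the general weak$^*$ semicontinuity of Proposition~\ref{prop:upper-cont-and-bound} only bounds $\lambda_k$ \emph{from above} along weak limits and is useless for this direction, so one needs genuine two-sided continuity of $\lambda_k$ along a sufficiently strong approximation.  Lemma~\ref{lem:sobolev-dual-meas} supplies exactly this, and the dimensional hypothesis $m\geq 3$ enters precisely through the Sobolev embedding that identifies the right dual topology in which smooth positive densities are dense in $L^\infty$.
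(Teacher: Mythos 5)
Your proof is correct and follows essentially the same route as the paper. The $L^1$-to-$L^\infty$ step via truncation $\min(f,N)$ is exactly what the paper does (the paper's $\rho_n = \max\{n,\rho\}$ is a typo for $\min$, as is clear from the surrounding inequality $\mu \geq \mu_n$), and the $L^\infty$-to-$C^\infty$ step via approximation in the $(H^{1,m/(m-1)})^*$-norm and Lemma~\ref{lem:sobolev-dual-meas} is likewise the paper's argument; you simply spell out the mollification and the Sobolev embedding $L^{m/2} \hookrightarrow (H^{1,m/(m-1)})^*$ that the paper leaves implicit in its appeal to density. The only point worth flagging is that eigenvalue continuity under operator-norm convergence of $T(\mu_\epsilon) \to T(\mu)$ gives $\lambda_k^{-1}(\mu_\epsilon) \to \lambda_k^{-1}(\mu)$, and to pass to $\lambda_k$ itself one should note $\lambda_k(\mu) > 0$ — but this holds automatically on a connected closed $M$ for nonzero $\mu$, and in any case only measures with $\lambda_k > 0$ matter for the supremum.
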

\begin{proof}
  To deduce the first equality, notice that
  by Lemma~\ref{lem:sobolev-dual-meas},
  the closure of $\set{\mathfrak{q}(\mu)}{\mu \in C^\infty}$
  in the sup-norm on $\mathcal{B}[X]$ coincides
  with the corresponding closure of
  \begin{equation}
    \overline{\set*{\mathfrak{q}(\mu)}
    {\mu \in \brr{H^{1,\frac{m}{m-1}}}^*}}^{\mathcal{B}[X]}.
  \end{equation}
  Then $L^\infty \subset \brr{H^{1,\frac{m}{m-1}}}^*$, and eigenvalues
  $\lambda_k(\mu)$ depend continuously with respect to the sup-norm on
  $\mathcal{B}[X]$.

  Now, assume $\mu = \rho dv_g \in L^1$. Take $\mu_n = \rho_n dv_g$ with
  $\rho_n = \min \brc{n, \rho} \in L^\infty$. Since $\mu \geq \mu_n$,
  the variational characterization~\eqref{meas-eigenval} yields
  $\lambda_k(\mu) \leq \lambda_k(\mu_n)$. At the same time,
  $\mu_n(M) \to \mu(M)$, and hence
  \begin{equation}
    \sup_{\mu \in \mathcal{M}\cap L^1} \overline{\lambda}_k(\mu)
    \leq \sup_{\mu \in \mathcal{M}\cap L^\infty} \overline{\lambda}_k(\mu).
  \end{equation}
  Therefore, equality must hold.
\end{proof}

\subsection{Nuclear operators on Hilbert spaces}\label{sec:nuc-operators}

Let $H$ be a Hilbert space. We identify $H^* \approx H$. Then
the space of (bounded) bilinear forms $\mathcal{B}[H]$ becomes isomorphic
to the space of (bounded) linear operators $\mathcal{L}[H]$.
For an element $\mathfrak{t} \in H \otimes H$ (of the algebraic tensor product), consider its projective norm
\begin{equation}
  \norm{\mathfrak{t}}_\pi = \inf \set*{\sum \norm{x_i}\norm{y_i}}{\mathfrak{t} = \sum x_i \otimes y_i}.
\end{equation}
The closure of $(H \otimes H, \norm{\cdot}_{\pi})$ is denoted by $H \hat{\otimes}_\pi H$.
Then $H \hat{\otimes}_\pi H$ (or more naturally, $H \hat{\otimes}_\pi H^*$)
is isomorphic to the space of nuclear (or trace class) operators $\mathcal{N}[H]$
so that
\begin{equation}
  x \otimes y \mapsto \brr{z \mapsto x\brt{y,z}}.
\end{equation}
The predual of $\mathcal{N}[H]$ is the space of compact operators $\mathcal{K}[H]$, i.e.
$\mathcal{K}[H]^* \approx \mathcal{N}[H]$, where the duality pairing is given as
\begin{equation}\label{eq:duality}
  (\mathfrak{q} , \mathfrak{t}) \mapsto \Tr \mathfrak{q}\mathfrak{t}, \quad \mathfrak{q} \in \mathcal{K}[H],\ \mathfrak{t} \in \mathcal{N}[H],
\end{equation}
and the dual is $\mathcal{L}[H] = \mathcal{N}[H]^*$.
Given the isomorphism $H \hat{\otimes}_\pi H \approx \mathcal{N}[H]$ and
$\mathfrak{t} \in \mathcal{N}[H]$, one has
\begin{equation}
  \norm{\mathfrak{t}}_\pi = \norm{\mathfrak{t}}_{\mathcal{N}[H]} := \Tr \sqrt{\mathfrak{t}^* \mathfrak{t}}.
\end{equation}
In particular, for a positive self-adjoint tensor $\mathfrak{t} = \sum x_i \otimes x_i$,
the norm is simply the trace,
\begin{equation}\label{eq:positive-tens-norm}
  \norm{\mathfrak{t}}_{\mathcal{N}[H]} = \Tr \mathfrak{t} = \sum \norm{x_i}^2.
\end{equation}

If $\mathfrak{a},\mathfrak{b} \colon H_1 \to H_2$ are two bounded linear operators between Hilbert spaces,
one naturally defines the operator $\mathfrak{a} \hat{\otimes}_\pi \mathfrak{b} \colon H_1 \hat{\otimes}_\pi H_1
\to H_2 \hat{\otimes}_\pi H_2$ acting by $x \otimes y \mapsto \mathfrak{a}(x) \otimes \mathfrak{b}(y)$.
Note that
\begin{equation}
  \norm{\mathfrak{a} \hat{\otimes}_\pi \mathfrak{b}} \leq \norm{\mathfrak{a}} \norm{\mathfrak{b}},
\end{equation}
and the operator $\mathfrak{a} \hat{\otimes}_\pi \mathfrak{b}$ transforms into $\mathfrak{t}
\mapsto \mathfrak{a} \mathfrak{t} \mathfrak{b}^*$ under the isomorphisms $H_i \hat{\otimes}_\pi H_i \approx \mathcal{N}[H_i]$,
with $\mathfrak{t} \in \mathcal{N}[H_1]$.

\begin{proposition}\label{prop:weak-star-continuity}
  Let $\mathfrak{a} \colon H_1 \to H_2$ be a continuous linear map of Hilbert spaces. Then
  the induced map $\mathfrak{a} \hat{\otimes}_\pi \mathfrak{a} \colon \mathcal{N}[H_1] \to \mathcal{N}[H_2]$
  is weakly$^*$ continuous.
\end{proposition}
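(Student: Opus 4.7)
The plan is to exhibit $\mathfrak{a} \hat{\otimes}_\pi \mathfrak{a}$ as the Banach-space adjoint of a bounded linear map between the preduals $\mathcal{K}[H_2]$ and $\mathcal{K}[H_1]$. Recall the general fact that a bounded linear map $T\colon E^* \to F^*$ between dual Banach spaces is weak$^*$ continuous if and only if $T = S^*$ for some bounded $S\colon F \to E$. So it suffices to construct such an $S$.

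First, I would define $S\colon \mathcal{K}[H_2] \to \mathcal{K}[H_1]$ by $S(\mathfrak{q}) := \mathfrak{a}^* \mathfrak{q} \mathfrak{a}$. This is well-defined because the compact operators form a two-sided ideal in $\mathcal{L}[H_i]$: composing a compact operator $\mathfrak{q}$ with the bounded maps $\mathfrak{a}$ and $\mathfrak{a}^*$ on either side yields a compact operator. Boundedness is immediate from the operator-norm estimate
\begin{equation}
\norm{S(\mathfrak{q})}_{\mathcal{L}[H_1]} \leq \norm{\mathfrak{a}^*}\norm{\mathfrak{q}}_{\mathcal{L}[H_2]}\norm{\mathfrak{a}} = \norm{\mathfrak{a}}^2 \norm{\mathfrak{q}}_{\mathcal{L}[H_2]},
\end{equation}
so $\norm{S} \leq \norm{\mathfrak{a}}^2$.

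Next, I would identify $(\mathfrak{a} \hat{\otimes}_\pi \mathfrak{a})$ with $S^*$. Using the description recalled in the excerpt, $\mathfrak{a} \hat{\otimes}_\pi \mathfrak{a}$ corresponds under the isomorphism $H_i \hat{\otimes}_\pi H_i \approx \mathcal{N}[H_i]$ to the map $\mathfrak{t} \mapsto \mathfrak{a}\mathfrak{t}\mathfrak{a}^*$. For any $\mathfrak{q} \in \mathcal{K}[H_2]$ and $\mathfrak{t} \in \mathcal{N}[H_1]$, the duality pairing~\eqref{eq:duality} together with the cyclicity of the trace gives
\begin{equation}
\brt{\mathfrak{q},\, (\mathfrak{a} \hat{\otimes}_\pi \mathfrak{a})(\mathfrak{t})} = \Tr\brr{\mathfrak{q}\, \mathfrak{a}\mathfrak{t}\mathfrak{a}^*} = \Tr\brr{\mathfrak{a}^*\mathfrak{q}\mathfrak{a}\, \mathfrak{t}} = \brt{S(\mathfrak{q}),\, \mathfrak{t}},
\end{equation}
where cyclicity is justified because $\mathfrak{t}$ is trace class and the other factors are bounded. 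This identity is exactly $\mathfrak{a} \hat{\otimes}_\pi \mathfrak{a} = S^*$, and weak$^*$ continuity follows.

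There is no real obstacle here — the argument is essentially bookkeeping with the predual identification $\mathcal{K}[H_i]^* \approx \mathcal{N}[H_i]$. The only point requiring a moment of care is checking that $S$ genuinely lands in $\mathcal{K}[H_1]$ (not merely in $\mathcal{L}[H_1]$), which is the ideal property of compact operators, and ensuring that the trace identity is legitimate, which holds since only one factor in each product need be trace class for the trace to be defined and cyclic.
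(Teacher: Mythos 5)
Your proof is correct and follows essentially the same route as the paper: exhibit $\mathfrak{a}\hat{\otimes}_\pi\mathfrak{a}$ as the adjoint of the predual map $\mathfrak{q}\mapsto\mathfrak{a}^*\mathfrak{q}\mathfrak{a}$ on $\mathcal{K}[H_2]$, and use the ideal property of compact operators. You merely spell out the trace-cyclicity computation that the paper leaves implicit.
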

\begin{proof}
  It is enough to show that the adjoint operator maps
  $\mathcal{K}[H_2]$ into $\mathcal{K}[H_1]$.
  The duality~\eqref{eq:duality} implies that the adjoint
  of $\mathfrak{t} \mapsto \mathfrak{a} \mathfrak{t} \mathfrak{a}^*$ is simply
  \begin{equation}
    \mathfrak{q} \mapsto \mathfrak{a}^* \mathfrak{q} \mathfrak{a}, \quad \mathfrak{q}\in\mathcal{K}[H_2],
  \end{equation}
  and $\mathfrak{a}^* \mathfrak{q} \mathfrak{a}$ is again compact.
\end{proof}
\begin{proposition}\label{prop:compact-maps}
  Let $\mathfrak{c} \colon H_1 \to H_2$ be a compact linear map of Hilbert spaces. Then
  the induced map $\mathfrak{c} \hat{\otimes}_\pi \mathfrak{c} \colon \mathcal{N}[H_1] \to \mathcal{N}[H_2]$
  is compact and maps weakly$^*$ convergent sequences to strongly
  convergent ones.
\end{proposition}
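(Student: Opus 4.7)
The plan is to prove the two claims in sequence, deriving the second from the first together with the weak$^*$ continuity already established in Proposition~\ref{prop:weak-star-continuity}.

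For the compactness of $\mathfrak{c} \hat{\otimes}_\pi \mathfrak{c}$, I would use that every compact operator between Hilbert spaces is a norm limit of finite-rank operators. Choose $\mathfrak{c}_n \colon H_1 \to H_2$ of finite rank with $\|\mathfrak{c}_n - \mathfrak{c}\| \to 0$. The image of $\mathfrak{c}_n$ is finite-dimensional, so for any $\mathfrak{t} \in \mathcal{N}[H_1]$ the operator $\mathfrak{c}_n \mathfrak{t} \mathfrak{c}_n^*$ lies in the finite-dimensional space $\mathcal{N}[\mathrm{im}(\mathfrak{c}_n)] \subset \mathcal{N}[H_2]$; hence $\mathfrak{c}_n \hat{\otimes}_\pi \mathfrak{c}_n$ has finite rank. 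Writing
\begin{equation}
\mathfrak{c} \hat{\otimes}_\pi \mathfrak{c} - \mathfrak{c}_n \hat{\otimes}_\pi \mathfrak{c}_n
= (\mathfrak{c}-\mathfrak{c}_n) \hat{\otimes}_\pi \mathfrak{c} + \mathfrak{c}_n \hat{\otimes}_\pi (\mathfrak{c}-\mathfrak{c}_n),
\end{equation}
the bound $\|\mathfrak{a} \hat{\otimes}_\pi \mathfrak{b}\| \leq \|\mathfrak{a}\|\|\mathfrak{b}\|$ stated before Proposition~\ref{prop:weak-star-continuity} gives $\|\mathfrak{c} \hat{\otimes}_\pi \mathfrak{c} - \mathfrak{c}_n \hat{\otimes}_\pi \mathfrak{c}_n\| \leq \|\mathfrak{c}-\mathfrak{c}_n\|(\|\mathfrak{c}\|+\|\mathfrak{c}_n\|) \to 0$. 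Thus $\mathfrak{c} \hat{\otimes}_\pi \mathfrak{c}$ is an operator-norm limit of finite-rank operators, hence compact.

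For the second statement, let $\mathfrak{t}_n \overset{w^*}{\to} \mathfrak{t}$ in $\mathcal{N}[H_1] = \mathcal{K}[H_1]^*$. By the uniform boundedness principle applied to the predual $\mathcal{K}[H_1]$, the sequence $\{\mathfrak{t}_n\}$ is norm-bounded. Compactness of $\mathfrak{c} \hat{\otimes}_\pi \mathfrak{c}$ then means $\{\mathfrak{c}\mathfrak{t}_n\mathfrak{c}^*\}$ is relatively norm-compact in $\mathcal{N}[H_2]$. To promote this to actual norm convergence, I would run a standard subsequence argument: any subsequence of $\{\mathfrak{c}\mathfrak{t}_n\mathfrak{c}^*\}$ has a further sub-subsequence converging in nuclear norm to some $\mathfrak{s} \in \mathcal{N}[H_2]$; but strong convergence implies weak$^*$ convergence, and by Proposition~\ref{prop:weak-star-continuity} we already know $\mathfrak{c}\mathfrak{t}_n\mathfrak{c}^* \overset{w^*}{\to} \mathfrak{c}\mathfrak{t}\mathfrak{c}^*$. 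By uniqueness of weak$^*$ limits, $\mathfrak{s} = \mathfrak{c}\mathfrak{t}\mathfrak{c}^*$, so the whole sequence converges in nuclear norm.

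There is no real obstacle here: the only mildly subtle point is ensuring that weak$^*$ sequential boundedness is automatic, which uses that $\mathcal{N}[H_1]$ is the dual of $\mathcal{K}[H_1]$, a fact recorded in the excerpt right before equation~\eqref{eq:duality}. Everything else is formal manipulation, and no separability assumption on $H_1$ is needed because the compactness of the operator $\mathfrak{c} \hat{\otimes}_\pi \mathfrak{c}$ already supplies the required sequential compactness on bounded sets.
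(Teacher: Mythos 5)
Your proof is correct and follows essentially the same route as the paper: the identical decomposition $\mathfrak{c} \hat{\otimes}_\pi \mathfrak{c} - \mathfrak{c}_n \hat{\otimes}_\pi \mathfrak{c}_n = (\mathfrak{c}-\mathfrak{c}_n) \hat{\otimes}_\pi \mathfrak{c} + \mathfrak{c}_n \hat{\otimes}_\pi (\mathfrak{c}-\mathfrak{c}_n)$ for compactness, and then precompactness plus uniqueness of the weak$^*$ limit (via Proposition~\ref{prop:weak-star-continuity}) for the second claim. You merely spell out a few steps the paper leaves implicit --- that $\mathfrak{c}_n\hat{\otimes}_\pi\mathfrak{c}_n$ is finite rank, and the uniform-boundedness-principle bound on $\{\mathfrak{t}_n\}$ --- which are correct and welcome clarifications but not a different argument.
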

\begin{proof}
  A compact map between Hilbert spaces is a norm-limit of finite
  rank operators $\mathfrak{c}_n \to \mathfrak{c}$. Then we have
  \begin{equation}
    \mathfrak{c} \hat{\otimes}_\pi \mathfrak{c} - \mathfrak{c}_n \hat{\otimes}_\pi \mathfrak{c}_n
     = (\mathfrak{c} - \mathfrak{c}_n) \hat{\otimes}_\pi \mathfrak{c} +
    \mathfrak{c}_n \hat{\otimes}_\pi (\mathfrak{c} - \mathfrak{c}_n),
  \end{equation}
  so $\norm{\mathfrak{c} \hat{\otimes}_\pi \mathfrak{c} - \mathfrak{c}_n \hat{\otimes}_\pi \mathfrak{c}_n}
  \leq (\norm{\mathfrak{c}} + \norm{\mathfrak{c}_n}) \norm{\mathfrak{c} - \mathfrak{c}_n} \to 0$.
  Thus, $\mathfrak{c} \hat{\otimes}_\pi \mathfrak{c}$ is also a norm-limit of
  finite rank operators, hence compact.

  Now, let $\mathfrak{t}_\epsilon \overset{w^*}{\to} \mathfrak{t} \in \mathcal{N}[H]$.
  Then the precompact sequence $\mathfrak{c} \mathfrak{t}_\epsilon \mathfrak{c}^*$ has
  $\mathfrak{c} \mathfrak{t} \mathfrak{c}^*$ as its only accumulation point, due to
  the weak$^*$ convergence implied by Proposition~\ref{prop:weak-star-continuity}.
  This means that $\mathfrak{c} \mathfrak{t}_\epsilon \mathfrak{c}^*$ converges
  strongly to $\mathfrak{c} \mathfrak{t} \mathfrak{c}^*$.
\end{proof}

\begin{lemma}\label{lem:tensor-decomp}
  Let $\mathfrak{t}_\epsilon \overset{w^*}{\to} \mathfrak{t} \in \mathcal{N}[H]$. Then
  there exists a collection of finite rank orthogonal projectors $\mathfrak{p}_\epsilon$ on $H$
  such that
  $\mathfrak{p}_\epsilon \mathfrak{t}_\epsilon \mathfrak{p}_\epsilon \to \mathfrak{t}$ in $\mathcal{N}[H]$
  and $ \mathfrak{q}_\epsilon \mathfrak{t}_\epsilon  \mathfrak{q}_\epsilon \overset{w^*}{\to} 0$,
  with $\mathfrak{q}_\epsilon := 1 - \mathfrak{p}_\epsilon$.
\end{lemma}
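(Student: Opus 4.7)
My plan is to construct the projectors $\mathfrak{p}_\epsilon$ via a diagonal argument applied to a fixed approximation scheme for $\mathfrak{t}$. Assuming $H$ is separable (as it is in the paper's application to $H^1(M)$), fix an orthonormal basis $\brc{e_i}_{i \geq 1}$ of $H$ and let $\pi_n$ denote the orthogonal projection onto $\mathrm{span}(e_1,\ldots,e_n)$, so that $\pi_n \to \id_H$ strongly and $\pi_n \leq \pi_{n+1}$.

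The central step is to establish that $\pi_n \mathfrak{t} \pi_n \to \mathfrak{t}$ in the nuclear norm. Given $\delta > 0$, I would pick a finite-rank tensor $\mathfrak{t}' = \sum_{i=1}^N x_i \otimes y_i$ with $\norm{\mathfrak{t} - \mathfrak{t}'}_{\mathcal{N}[H]} < \delta$. Using the contractivity $\norm{\pi_n \mathfrak{s} \pi_n}_{\mathcal{N}[H]} \leq \norm{\mathfrak{s}}_{\mathcal{N}[H]}$, I would split
\begin{equation}
    \norm{\pi_n \mathfrak{t} \pi_n - \mathfrak{t}}_{\mathcal{N}[H]} \leq 2\delta + \norm{\pi_n \mathfrak{t}' \pi_n - \mathfrak{t}'}_{\mathcal{N}[H]} \leq 2\delta + \sum_{i=1}^N \brr{\norm{\pi_n x_i - x_i}\norm{y_i} + \norm{x_i}\norm{\pi_n y_i - y_i}},
\end{equation}
and let $n\to\infty$ (so the sum tends to $0$) and then $\delta\to 0$. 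Proposition~\ref{prop:compact-maps} applied to the compact operator $\pi_n$ then yields $\pi_n \mathfrak{t}_\epsilon \pi_n \to \pi_n \mathfrak{t} \pi_n$ in $\mathcal{N}[H]$ as $\epsilon\to 0$ for each fixed $n$; a standard diagonal extraction gives $n(\epsilon)\to\infty$ such that $\mathfrak{p}_\epsilon := \pi_{n(\epsilon)}$ satisfies $\mathfrak{p}_\epsilon \mathfrak{t}_\epsilon \mathfrak{p}_\epsilon \to \mathfrak{t}$ in nuclear norm.

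For the second conclusion, observe that $\mathfrak{q}_\epsilon = \id - \pi_{n(\epsilon)} \to 0$ strongly since $n(\epsilon) \to \infty$. For any compact operator $\mathfrak{k} \in \mathcal{K}[H]$, the classical fact that composing a uniformly bounded strongly convergent sequence with a compact operator yields operator-norm convergence gives $\norm{\mathfrak{q}_\epsilon \mathfrak{k}} \to 0$, whence $\norm{\mathfrak{q}_\epsilon \mathfrak{k} \mathfrak{q}_\epsilon} \to 0$. Weak$^*$ convergence of $\mathfrak{t}_\epsilon$ implies uniform boundedness $\sup_\epsilon \norm{\mathfrak{t}_\epsilon}_{\mathcal{N}[H]} < \infty$ via the uniform boundedness principle, so trace duality~\eqref{eq:duality} gives
\begin{equation}
    \abs{\Tr \brr{\mathfrak{k} \mathfrak{q}_\epsilon \mathfrak{t}_\epsilon \mathfrak{q}_\epsilon}} = \abs{\Tr \brr{\mathfrak{q}_\epsilon \mathfrak{k} \mathfrak{q}_\epsilon \, \mathfrak{t}_\epsilon}} \leq \norm{\mathfrak{q}_\epsilon \mathfrak{k} \mathfrak{q}_\epsilon} \cdot \norm{\mathfrak{t}_\epsilon}_{\mathcal{N}[H]} \longrightarrow 0.
\end{equation}
Since $\mathfrak{k}$ was arbitrary, this is exactly $\mathfrak{q}_\epsilon \mathfrak{t}_\epsilon \mathfrak{q}_\epsilon \overset{w^*}{\to} 0$ in $\mathcal{N}[H]$.

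The main obstacle I anticipate is the nuclear-norm approximation $\pi_n \mathfrak{t} \pi_n \to \mathfrak{t}$: this rests on density of finite-rank tensors in the projective tensor topology, which is substantially stronger than convergence in operator norm or weakly$^*$. Once that is in place, the remainder is a routine combination of Proposition~\ref{prop:compact-maps}, a diagonal extraction, and the standard operator-norm estimate for compact operators composed with strongly vanishing projections.
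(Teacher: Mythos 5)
Your proposal is correct, and it follows the same broad architecture as the paper's proof — choose finite-rank projectors, establish strong convergence for each fixed $n$ via Proposition~\ref{prop:compact-maps}, and diagonalize — but the two proofs differ in two concrete ways that are worth noting.

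First, the choice of projectors. The paper takes $\mathfrak{p}_n$ to be the orthogonal projector onto $V_n = \mathrm{Span}\langle x_i, y_i \rangle_{i\leq n}$ for a fixed nuclear representation $\mathfrak{t}=\sum x_i\otimes y_i$. This has the advantage that $\mathfrak{p}_n$ exactly fixes the truncated sum $\sum_{i\leq n}x_i\otimes y_i$ while $\mathfrak{q}_n$ annihilates it, so the norm estimates $\norm{\mathfrak{p}_n\mathfrak{t}\mathfrak{p}_n-\mathfrak{t}}_\pi<2\delta$ and $\norm{\mathfrak{q}_n\mathfrak{t}\mathfrak{q}_n}_\pi<\delta$ drop out immediately with no appeal to a basis. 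You instead fix an orthonormal basis and use $\pi_n\to\id_H$ strongly, which forces you to assume $H$ is separable. That assumption is harmless in this paper's applications (and is arguably implicit anyway, since all the operators involved act on a common separable subspace), but the paper's route is formally more general and avoids the extra hypothesis.

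Second, the argument for $\mathfrak{q}_\epsilon\mathfrak{t}_\epsilon\mathfrak{q}_\epsilon\overset{w^*}{\to}0$. The paper derives $\mathfrak{q}_n\mathfrak{t}_\epsilon\mathfrak{q}_n\overset{w^*}{\to}\mathfrak{q}_n\mathfrak{t}\mathfrak{q}_n$ for each fixed $n$, notes $\mathfrak{q}_n\mathfrak{t}\mathfrak{q}_n\to0$ in norm, and invokes the diagonal $n(\epsilon)$ without spelling out how the weak$^*$ double limit is interchanged. Your argument is more direct and self-contained: because $\mathfrak{q}_\epsilon\to 0$ strongly (a feature of the basis choice), for a fixed compact $\mathfrak{k}$ one has $\norm{\mathfrak{q}_\epsilon\mathfrak{k}\mathfrak{q}_\epsilon}\to0$, and trace duality together with the uniform bound on $\norm{\mathfrak{t}_\epsilon}_{\mathcal{N}}$ (from the uniform boundedness principle) finishes it. This replaces a diagonal over a weak$^*$ convergence by a pointwise estimate that needs no diagonalization at all — a genuine simplification of that step. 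In short: the paper's proof is basis-free and works without separability, while yours buys a cleaner, more explicit handling of the weak$^*$ limit at the modest cost of fixing a basis.
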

\begin{proof}
  One can write $\mathfrak{t} = \sum x_i \otimes y_i$ with
  $\sum \norm{x_i}\norm{y_i} < \infty$. Then we decompose
  it,
  \begin{equation}
    \mathfrak{t} = \sum_{i \leq n} x_i \otimes y_i + \mathfrak{r},
  \end{equation}
  so that $\norm{\mathfrak{r}}_\pi < \delta$. Let $\mathfrak{p}_n$ be the orthogonal
  projector onto $V_n = Span \brc{x_i, y_i}_{i \leq n}$. On the one hand,
  \begin{equation}
    \norm{\mathfrak{p}_n \mathfrak{t} \mathfrak{p}_n - \mathfrak{t}}_{\pi} =
    \norm{\mathfrak{p}_n \mathfrak{r} \mathfrak{p}_n - \mathfrak{r}}_{\pi} < 2\delta
    \quad\text{and}\quad
    \norm{\mathfrak{q}_n \mathfrak{t} \mathfrak{q}_n}_{\pi} =
    \norm{\mathfrak{q}_n \mathfrak{r} \mathfrak{q}_n}_{\pi} < \delta
  \end{equation}
  implying $\mathfrak{p}_n \mathfrak{t} \mathfrak{p}_n \to \mathfrak{t}$
  and $\mathfrak{q}_n \mathfrak{t} \mathfrak{q}_n \to 0$ in $\mathcal{N}[H]$.
  On the other hand, the convergence
  $\mathfrak{t}_\epsilon \overset{w^*}{\to} \mathfrak{t}$ yields
  $\mathfrak{p}_n \mathfrak{t}_\epsilon \mathfrak{p}_n \to
  \mathfrak{p}_n \mathfrak{t} \mathfrak{p}_n$
  and $\mathfrak{q}_n \mathfrak{t}_\epsilon \mathfrak{q}_n \overset{w^*}{\to}
  \mathfrak{q}_n \mathfrak{t} \mathfrak{q}_n$ for a fixed $n$.
  Thus, one can set $\mathfrak{p}_\epsilon = \mathfrak{p}_n$ for an appropriate
  $n = n(\epsilon)$.
\end{proof}
A proof of the next proposition can be found, e.g., in the appendix of~\cite*{Arazy:1981:convergence--unitary-matrix-spaces}.
\begin{proposition}\label{prop:tensor-convergence}
  Let $\mathfrak{t}_\epsilon \overset{w^*}{\to} \mathfrak{t} \in \mathcal{N}[H]$
  and $\norm{\mathfrak{t}_\epsilon}_{\mathcal{N}[H]} \to \norm{\mathfrak{t}}_{\mathcal{N}[H]}$. Then
  $\mathfrak{t}_\epsilon \to \mathfrak{t}$ (strongly) in $\mathcal{N}[H]$.
\end{proposition}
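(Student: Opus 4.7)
The plan is to use Lemma~\ref{lem:tensor-decomp} to decompose $\mathfrak{t}_\epsilon$ into four blocks along a finite-rank orthogonal projector $\mathfrak{p}_\epsilon$ (with $\mathfrak{q}_\epsilon := 1 - \mathfrak{p}_\epsilon$), one of which already converges to $\mathfrak{t}$ in $\mathcal{N}[H]$, and show that the remaining three vanish in nuclear norm. The positive case — which is what is actually used in this paper, since the maximizing tensors have the form $\sum \phi^i_\epsilon \otimes \phi^i_\epsilon$ — contains the essential argument; the general case is reduced to it via a self-adjoint dilation / polar-decomposition trick, as in the appendix of~\cite{Arazy:1981:convergence--unitary-matrix-spaces}.

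\textbf{Positive case.} Assume $\mathfrak{t}_\epsilon, \mathfrak{t} \geq 0$. Because $\mathfrak{p}_\epsilon \mathfrak{t}_\epsilon \mathfrak{p}_\epsilon$ and $\mathfrak{q}_\epsilon \mathfrak{t}_\epsilon \mathfrak{q}_\epsilon$ are positive and supported on orthogonal subspaces, the orthogonal pinching identity
\begin{equation}
\norm{\mathfrak{p}_\epsilon \mathfrak{t}_\epsilon \mathfrak{p}_\epsilon}_{\mathcal{N}[H]} + \norm{\mathfrak{q}_\epsilon \mathfrak{t}_\epsilon \mathfrak{q}_\epsilon}_{\mathcal{N}[H]} = \Tr \mathfrak{t}_\epsilon = \norm{\mathfrak{t}_\epsilon}_{\mathcal{N}[H]}
\end{equation}
holds, and combined with Lemma~\ref{lem:tensor-decomp} (which gives $\norm{\mathfrak{p}_\epsilon \mathfrak{t}_\epsilon \mathfrak{p}_\epsilon}_{\mathcal{N}[H]} \to \norm{\mathfrak{t}}_{\mathcal{N}[H]}$) and the hypothesis $\norm{\mathfrak{t}_\epsilon}_{\mathcal{N}[H]} \to \norm{\mathfrak{t}}_{\mathcal{N}[H]}$, it forces $\norm{\mathfrak{q}_\epsilon \mathfrak{t}_\epsilon \mathfrak{q}_\epsilon}_{\mathcal{N}[H]} \to 0$. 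The two off-diagonal blocks are then controlled via the Schatten–Hölder inequality $\norm{AB}_{\mathcal{N}[H]} \leq \norm{A}_2 \norm{B}_2$ applied to the factorization $\mathfrak{t}_\epsilon = \mathfrak{t}_\epsilon^{1/2}\mathfrak{t}_\epsilon^{1/2}$:
\begin{equation}
\norm{\mathfrak{p}_\epsilon \mathfrak{t}_\epsilon \mathfrak{q}_\epsilon}_{\mathcal{N}[H]} \leq \norm{\mathfrak{p}_\epsilon \mathfrak{t}_\epsilon^{1/2}}_2\,\norm{\mathfrak{t}_\epsilon^{1/2} \mathfrak{q}_\epsilon}_2 \leq \sqrt{\norm{\mathfrak{t}_\epsilon}_{\mathcal{N}[H]}}\,\sqrt{\norm{\mathfrak{q}_\epsilon \mathfrak{t}_\epsilon \mathfrak{q}_\epsilon}_{\mathcal{N}[H]}} \to 0,
\end{equation}
using $\norm{\mathfrak{p}_\epsilon \mathfrak{t}_\epsilon^{1/2}}_2^2 = \Tr(\mathfrak{p}_\epsilon \mathfrak{t}_\epsilon \mathfrak{p}_\epsilon) \leq \Tr \mathfrak{t}_\epsilon$ and $\norm{\mathfrak{t}_\epsilon^{1/2} \mathfrak{q}_\epsilon}_2^2 = \Tr(\mathfrak{q}_\epsilon \mathfrak{t}_\epsilon \mathfrak{q}_\epsilon)$. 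The symmetric bound controls $\mathfrak{q}_\epsilon \mathfrak{t}_\epsilon \mathfrak{p}_\epsilon$. The four-term decomposition $\mathfrak{t}_\epsilon = \mathfrak{p}_\epsilon \mathfrak{t}_\epsilon \mathfrak{p}_\epsilon + \mathfrak{p}_\epsilon \mathfrak{t}_\epsilon \mathfrak{q}_\epsilon + \mathfrak{q}_\epsilon \mathfrak{t}_\epsilon \mathfrak{p}_\epsilon + \mathfrak{q}_\epsilon \mathfrak{t}_\epsilon \mathfrak{q}_\epsilon$ then delivers $\mathfrak{t}_\epsilon \to \mathfrak{t}$ in $\mathcal{N}[H]$.

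\textbf{Main obstacle.} The delicate step is dropping positivity, because weak$^*$ convergence of $\mathfrak{t}_\epsilon$ does not in general pass to the absolute values $|\mathfrak{t}_\epsilon|$, and the sharp pinching identity above is not directly available. The workaround I have in mind is to pass to the self-adjoint dilation $\bigl(\begin{smallmatrix} 0 & \mathfrak{t}_\epsilon \\ \mathfrak{t}_\epsilon^* & 0 \end{smallmatrix}\bigr)$ on $H\oplus H$ and then further split via its spectral projections onto the positive and negative parts. Those spectral projections do not converge in general, and handling this is the heart of the argument of~\cite{Arazy:1981:convergence--unitary-matrix-spaces}. For the applications of the proposition in this paper, however, only the positive case is needed.
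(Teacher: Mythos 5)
The paper itself does not prove this proposition; it simply cites the appendix of Arazy's paper and moves on. Your proposal therefore offers more than the paper does: a self-contained and correct proof in the positive case, which — as you observe — is the only case the paper ever invokes (the maximizing tensors are always of the form $\sum \phi^i_\epsilon \otimes \phi^i_\epsilon$).

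Your positive-case argument checks out. The pinching identity
$\Tr(\mathfrak{p}_\epsilon\mathfrak{t}_\epsilon\mathfrak{p}_\epsilon)+\Tr(\mathfrak{q}_\epsilon\mathfrak{t}_\epsilon\mathfrak{q}_\epsilon)=\Tr((\mathfrak{p}_\epsilon+\mathfrak{q}_\epsilon)\mathfrak{t}_\epsilon)=\Tr\mathfrak{t}_\epsilon$
holds by cyclicity and idempotence of the projections, and both corner blocks are positive, so their traces coincide with their nuclear norms. Lemma~\ref{lem:tensor-decomp} gives strong convergence of $\mathfrak{p}_\epsilon\mathfrak{t}_\epsilon\mathfrak{p}_\epsilon$ to $\mathfrak{t}$, hence convergence of its norm, and the hypothesis squeezes $\Tr(\mathfrak{q}_\epsilon\mathfrak{t}_\epsilon\mathfrak{q}_\epsilon)\to 0$. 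The Schatten--H\"older estimate on the off-diagonal blocks, with $\norm{\mathfrak{p}_\epsilon\mathfrak{t}_\epsilon^{1/2}}_2^2=\Tr(\mathfrak{p}_\epsilon\mathfrak{t}_\epsilon\mathfrak{p}_\epsilon)$ and $\norm{\mathfrak{t}_\epsilon^{1/2}\mathfrak{q}_\epsilon}_2^2=\Tr(\mathfrak{q}_\epsilon\mathfrak{t}_\epsilon\mathfrak{q}_\epsilon)$, is also correct. So the four-block decomposition yields the claim.

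You are also right to flag the obstruction in dropping positivity: weak$^*$ convergence does not propagate to absolute values or polar parts, and the naive dilation $\bigl(\begin{smallmatrix}0&\mathfrak{t}_\epsilon\\ \mathfrak{t}_\epsilon^*&0\end{smallmatrix}\bigr)$ does not resolve this because its spectral projections need not converge; this is precisely the technical content of Arazy's appendix. Since your writeup is explicit that the general case is deferred to that reference (as the paper itself does), there is no gap. If anything, for the purposes of this paper it would be cleanest to state the proposition only for positive self-adjoint tensors and use your argument, which removes the black-box citation entirely.
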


Let $U \subset M$ be an open set. Denote by
\begin{equation}\label{eq:nucler-top-H1}
  \m{\cdot}\colon L^2(U)\hat{\otimes}_\pi L^2(U) \to L^1(U)
\end{equation}
the linear map induced by the multiplication of functions, i.e.
$f \otimes g \mapsto fg$, and by
\begin{equation}
  \md{\cdot}\colon H^1(U)\hat{\otimes}_\pi H^1(U) \to L^1(U)
\end{equation}
the linear map induced by $f \otimes g \mapsto \brt{df,dg}$.
We keep the
same symbol $\m{\cdot}$ for the map $\m{\cdot}\colon H^1\hat{\otimes}_\pi H^1 \to L^1$.
Then equation~\eqref{eq:positive-tens-norm} reads as
\begin{equation}\label{eq:tens-norm-decomp}
  \norm{\mathfrak{t}}_{\tensor{H^1(U)}} = \int_U \m{\mathfrak{t}}dv_g + \int_U \md{\mathfrak{t}}dv_g
  \quad \text{when}\quad \mathfrak{t} = \sum \phi^i \otimes \phi^i.
\end{equation}
\begin{remark}\label{rem:weak-star-conv}
  Consider two open sets $V \subset U$ and the induced restriction map
  $\mathfrak{a}\colon H^1(U) \to H^1(V)$. If  $\mathfrak{t}_\epsilon \overset{w^*}{\to}
  \mathfrak{t} \in \mathcal{N}[H^1(U)]$, it follows from Proposition~\ref{prop:weak-star-continuity}
  that  $\mathfrak{t}_\epsilon|_V \overset{w^*}{\to}
  \mathfrak{t}|_V \in \mathcal{N}[H^1(V)]$, where  $\mathfrak{t}|_V := \mathfrak{a} \mathfrak{t} \mathfrak{a}^*$.
\end{remark}
As a corollary of Proposition~\ref{prop:compact-maps}, we obtain a generalization
of the weak$^*$-to-strong convergence under the compact embedding $H^1 \hookrightarrow L^2$.
\begin{corollary}
  \label{lem:l-inf-convergence}
  Let $\Omega \subset M$ be a bounded domain and $\mathfrak{t}_\epsilon \overset{w^*}{\to}
  \mathfrak{t} \in \mathcal{N}[H^1(\Omega)]$. Then
  $\mathfrak{t}_\epsilon \to \mathfrak{t}$ in $\mathcal{N}[L^2(\Omega)]$. In particular,
  $\m{\mathfrak{t}_\epsilon} \to \m{\mathfrak{t}}$ in $L^1(\Omega)$.
\end{corollary}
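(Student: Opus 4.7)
The plan is to apply Proposition \ref{prop:compact-maps} directly to the Rellich--Kondrachov inclusion $\iota \colon H^1(\Omega) \hookrightarrow L^2(\Omega)$, which is compact for any bounded domain $\Omega \subset M$ (with sufficient boundary regularity; in the applications $\Omega$ will typically be a coordinate ball, for which this is automatic). Following the functorial setup laid out just before the statement, $\iota$ induces a linear map
\begin{equation*}
  \iota \hat{\otimes}_\pi \iota \colon \mathcal{N}[H^1(\Omega)] \to \mathcal{N}[L^2(\Omega)],
  \qquad \mathfrak{t} \mapsto \iota \mathfrak{t} \iota^*,
\end{equation*}
which under the identifications $H \hat{\otimes}_\pi H \approx \mathcal{N}[H]$ simply records the tensor $\sum \phi^i \otimes \phi^i$ as an element of $L^2(\Omega)\hat{\otimes}_\pi L^2(\Omega)$ instead of $H^1(\Omega)\hat{\otimes}_\pi H^1(\Omega)$. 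Since $\iota$ is compact, Proposition \ref{prop:compact-maps} guarantees that $\iota \hat{\otimes}_\pi \iota$ sends weak$^*$ convergent sequences to norm-convergent ones. Applied to $\mathfrak{t}_\epsilon \overset{w^*}{\to} \mathfrak{t}$, this gives the first assertion $\mathfrak{t}_\epsilon \to \mathfrak{t}$ in $\mathcal{N}[L^2(\Omega)]$.

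For the ``in particular'' statement I would invoke the continuity of the multiplication map $\m{\cdot}$. The pointwise product $L^2(\Omega) \times L^2(\Omega) \to L^1(\Omega)$, $(f,g) \mapsto fg$, is bounded bilinear by Cauchy--Schwarz, so by the universal property of the projective tensor product it extends to a bounded linear map
\begin{equation*}
  \m{\cdot} \colon L^2(\Omega) \hat{\otimes}_\pi L^2(\Omega) \approx \mathcal{N}[L^2(\Omega)] \to L^1(\Omega),
  \qquad \norm{\m{\mathfrak{t}}}_{L^1} \leq \norm{\mathfrak{t}}_{\mathcal{N}[L^2(\Omega)]}.
\end{equation*}
The norm convergence $\mathfrak{t}_\epsilon \to \mathfrak{t}$ in $\mathcal{N}[L^2(\Omega)]$ established in the previous step then immediately yields $\m{\mathfrak{t}_\epsilon} \to \m{\mathfrak{t}}$ in $L^1(\Omega)$.

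I do not foresee any genuine obstacle: the corollary is essentially a functorial repackaging of Rellich--Kondrachov through Proposition \ref{prop:compact-maps}, together with the elementary boundedness of pointwise multiplication $L^2 \times L^2 \to L^1$. The only bookkeeping point to be careful about is that ``$\mathfrak{t}_\epsilon \to \mathfrak{t}$ in $\mathcal{N}[L^2(\Omega)]$'' is to be understood after implicitly applying $\iota \hat{\otimes}_\pi \iota$ on both sides, which is harmless because $\iota$ is injective and the nuclear norm on $\mathcal{N}[L^2(\Omega)]$ dominates the $L^1$-norm of the diagonal $\m{\cdot}$ we actually care about.
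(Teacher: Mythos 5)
Your proof is correct and is exactly the argument the paper intends: the corollary is stated immediately after Proposition~\ref{prop:compact-maps} with no further proof, the paper itself flagging it as ``a corollary of Proposition~\ref{prop:compact-maps}... under the compact embedding $H^1 \hookrightarrow L^2$,'' and you then finish by observing that $\m{\cdot}\colon L^2\hat{\otimes}_\pi L^2 \to L^1$ is bounded, as noted just above the corollary. (One minor quibble: in your final bookkeeping remark, the reason the implicit use of $\iota\hat{\otimes}_\pi\iota$ is harmless is simply that the statement ``$\mathfrak{t}_\epsilon \to \mathfrak{t}$ in $\mathcal{N}[L^2(\Omega)]$'' is naturally read through that map; injectivity of $\iota$ is not the relevant point, and indeed $\iota\hat{\otimes}_\pi\iota$ need not be injective.)
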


\subsection{Clarke subdifferential}\label{subsec:clarke-subdif}
Let $E$ be a Banach space, $U \subset E$ be an open subset,
and $f\colon U \to \mathbb{R}$ be a locally Lipschitz function.
One defines \emph{Clarke directional derivative} of $f$ at
$x \in U$ in the direction $v \in E$ as follows:
\begin{equation}
 f^{\circ}(x; v) = \limsup_{\tilde{x} \to x, \tau\downarrow 0}
 \frac{1}{\tau} \brs{f(\tilde{x} + \tau v) - f(\tilde{x})}.
\end{equation}
Since $f^{\circ}(x; v)$ is a sublinear and Lipschitz-continuous function of $v$,
it is a support functional of a convex closed subset of the dual $E^*$.
The Clarke subdifferential of $f$ at the point $x$ is defined to be
precisely this subset of $E^*$:
\begin{equation}
  \partial_C f(x) = \set*{\xi \in E^*}{\brt{\xi, v}
  \leq f^{\circ}(x; v) \ \forall v \in E}.
\end{equation}
The following properties of
$\partial_C f(x)$ can be found, e.g., in~\cite[Section~7.3]{Schirotzek:2007:nonsmooth-analysis}:
\begin{itemize}
  \item $\partial_C f(x) \neq \varnothing$;
  \item $\partial_C (\alpha f)(x) = \alpha\partial_C f(x)$ for $\alpha \in \R$;
  \item $\partial_C (f+g)(x) \subset \partial_C f(x) + \partial_C g(x)$;
  \item $\partial_C f(x)$ is a closed, convex, and bounded set, hence weakly$^*$ compact;
  \item $0 \in \partial_C f(x)$ if $x$ is a local maximum/minimum.
\end{itemize}
The Clarke subdifferential
generalizes the notion
of the derivative in the case of locally Lipschitz functions
and captures many useful properties of the derivative that
are used in optimization problems.
Different applications of the Clarke subdifferential to critical metrics
and eigenvalue optimization problems can be found in~\cite{Petrides-Tewodrose:2024:eigenvalue-via-clarke}.

\begin{proposition}[Chain rule]\label{prop:clarke-compose}
  Let $g \colon F \to E$ be a continuously Fréchet differentiable map of Banach spaces defined
  on a  neighborhood of $x\in F$. If
  $f\colon E \to \mathbb{R}$ is a Lipschitz function
  on a neighborhood of $g(x) \in E$, one has
  \begin{equation}
    \partial_C (g^*f)(x) \subset (d_x g)^*[\partial_C f(g(x))].
  \end{equation}
  Thus, the subdifferential of a pullback is contained in the pullback of the subdifferential.
\end{proposition}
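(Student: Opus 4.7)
The plan is to reduce the inclusion to a Hahn--Banach separation argument, using the key intermediate inequality
\begin{equation}
  (f\circ g)^\circ(x; v) \;\leq\; f^\circ(g(x);\, d_x g(v)) \qquad \forall\, v\in F.
\end{equation}
Once this inequality is established, the rest is soft. Indeed, the image $K := (d_x g)^*[\partial_C f(g(x))]$ is weakly$^*$ compact (the subdifferential $\partial_C f(g(x))$ is weakly$^*$ compact by the bulleted properties, and the adjoint of a bounded operator is weak$^*$-to-weak$^*$ continuous), hence convex and weakly$^*$ closed in $F^*$. If some $\xi\in \partial_C(g^*f)(x)$ did not lie in $K$, Hahn--Banach separation would furnish a $v\in F$ with
\begin{equation}
  \brt{\xi, v} > \sup_{\eta\in\partial_C f(g(x))} \brt{\eta, d_x g(v)} = f^\circ(g(x);\, d_x g(v)),
\end{equation}
where the last equality is the well-known identification of $f^\circ(y;\cdot)$ with the support functional of $\partial_C f(y)$. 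Combined with the intermediate inequality and the defining bound $\brt{\xi,v}\leq (f\circ g)^\circ(x;v)$, this is a contradiction.

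The main (and only nontrivial) step is therefore verifying the intermediate inequality. I would proceed pointwise: fix $v\in F$ and consider sequences $\tilde{x}_n\to x$ and $\tau_n\downarrow 0$ computing the $\limsup$ defining $(f\circ g)^\circ(x;v)$. Writing
\begin{equation}
  g(\tilde{x}_n + \tau_n v) = g(\tilde{x}_n) + \tau_n\, d_x g(v) + r_n,
\end{equation}
I would show $\|r_n\|/\tau_n \to 0$ using continuous Fréchet differentiability: by the mean value inequality for $g - (d_x g)\,\cdot$,
\begin{equation}
  \|r_n\| \leq \tau_n \sup_{t\in[0,1]} \|d_{\tilde{x}_n + t\tau_n v} g - d_x g\|\cdot \|v\|,
\end{equation}
and the supremum tends to $0$ by continuity of $dg$ at $x$. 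Setting $\tilde{y}_n := g(\tilde{x}_n) \to g(x)$ and invoking the local Lipschitz constant $L$ of $f$ near $g(x)$, I get
\begin{equation}
  \tfrac{1}{\tau_n}\brs{f(g(\tilde{x}_n+\tau_n v)) - f(g(\tilde{x}_n))} \leq \tfrac{1}{\tau_n}\brs{f(\tilde{y}_n + \tau_n d_x g(v)) - f(\tilde{y}_n)} + L\cdot \|r_n\|/\tau_n.
\end{equation}
Taking $\limsup$ of the left-hand side yields the desired inequality, since the right-hand side is bounded above by $f^\circ(g(x);\, d_x g(v)) + o(1)$.

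The hard part, to the extent there is one, is the interplay between the double limit $(\tilde{x}_n,\tau_n)$ in the Clarke derivative and the pointwise differentiability of $g$ at $x$: a careless estimate using only $g(\tilde{x}_n + \tau_n v)-g(\tilde{x}_n) \approx \tau_n\, d_{\tilde{x}_n}g(v)$ would introduce a direction $d_{\tilde{x}_n}g(v)$ depending on $n$, which does not immediately fit the definition of $f^\circ(g(x);\cdot)$. The continuity of $dg$ (i.e.\ the \emph{$C^1$} hypothesis, not merely Fréchet differentiability at $x$) is exactly what is needed to absorb this discrepancy into the $o(\tau_n)$ remainder, as carried out above. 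After that, the Hahn--Banach step is standard and the proof is complete.
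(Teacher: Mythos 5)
Your proof is correct, and the core step — the inequality $(f\circ g)^{\circ}(x;v)\leq f^{\circ}(g(x);d_xg(v))$ obtained by writing $g(\tilde{x}+\tau v)-g(\tilde{x})=\tau\,d_xg(v)+r$ and controlling $\|r\|/\tau$ via the mean-value bound $\sup_{\theta}\|d_{\tilde{x}+\theta v}g - d_xg\|$ together with the local Lipschitz constant of $f$ — is established exactly as in the paper. You also correctly identify why the $C^1$ hypothesis (not mere Fréchet differentiability at $x$) is what makes the double limit behave. The only divergence is in the concluding step: you invoke Hahn--Banach \emph{separation}, which requires you to first verify that $K=(d_xg)^*[\partial_Cf(g(x))]$ is convex and weakly$^*$ compact (hence weakly$^*$ closed), relying on the weak$^*$ compactness of the subdifferential and weak$^*$-continuity of the adjoint; the paper instead applies the Hahn--Banach \emph{extension} theorem directly, extending the functional $d_xg(v)\mapsto\langle\zeta,v\rangle$ (well-defined on $\operatorname{ran}d_xg$ precisely by the intermediate inequality) to all of $E$ dominated by the sublinear map $f^{\circ}(g(x);\cdot)$, thereby producing $\xi\in\partial_Cf(g(x))$ with $(d_xg)^*\xi=\zeta$ without ever needing the compactness of $K$. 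Both arguments are standard and essentially equivalent; the extension route is marginally leaner in its hypotheses, while the separation route you chose is perhaps more conceptually transparent as a ``the inclusion cannot fail'' contradiction.
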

\begin{proof}
  Since
  \begin{equation}
    \frac{1}{\tau}\norm{g(\tilde{x} + \tau v) - g(\tilde{x}) - d_{x} g(\tau v)}
    \leq \sup_{0\leq \theta \leq \tau} \norm{d_{\tilde{x} + \theta v} g(v) - d_{x} g(v)}
  \end{equation}
  and $d_{\tilde{x} + \theta v} g(v) \to d_x g(v)$ as $\tilde{x} \to x$, $\tau\downarrow 0$,
  one easily concludes that
  \begin{align}
    (g^*f)^{\circ}(x; v) &= \limsup_{\tilde{x} \to x, \tau\downarrow 0}
    \frac{1}{\tau} \brs{f(g(\tilde{x} + \tau v)) - f(g(\tilde{x}))}
    \\ &\leq \limsup_{\tilde{x} \to x, \tau\downarrow 0}
    \frac{1}{\tau} \brs{f(g(\tilde{x} + \tau v)) - f(g(\tilde{x})+ d_{x} g(\tau v))}
    \\ &+ \limsup_{\tilde{x} \to x, \tau\downarrow 0}
    \frac{1}{\tau} \brs{f(g(\tilde{x})+ \tau d_{x}g(v)) - f(g(\tilde{x}))}
    \\ & \leq f^{\circ}(g(x); d_x g(v)).
  \end{align}
  Then the linear map $d_x g(v) \mapsto \brt{\zeta,v}$,
  $\zeta \in \partial_C (g^*f)(x)$, is well defined and continuous
  since
  \begin{equation}
    \pm\brt{\zeta,v} \leq f^{\circ}(g(x); \pm d_x g(v)) \leq C \norm{d_x g(v)}.
  \end{equation}
  By the Hahn--Banach theorem, it can be extended to a map $\xi \in E^*$ such that
  $\brt{\xi,w} \leq f^{\circ}(g(x); w)$, $w\in E$. Thus, $\xi \in \partial_C f(g(x))$
  and $(d_x g)^*\xi = \zeta$.
\end{proof}
The following proposition is a simplified version of~\cite[Theorem~12.4.1]{Schirotzek:2007:nonsmooth-analysis}
sufficient for our purposes.
\begin{proposition}[Clarke’s Multiplier Rule]\label{prop:multip-rule}
  Let $f\colon U \to \mathbb{R}$ be a locally Lipschitz function defined
  on an open set $U$ of a Banach space $E$. Suppose that $U$ contains
  a closed convex set $S$, $x\in S$, and $f(x) = \min_{\tilde{x} \in S} f(\tilde{x})$.
  Then
  \begin{equation}
    \exists \xi \in \partial_C f(x)\colon
    \brt{\xi, \tilde{x}-x} \geq 0 \quad\forall \tilde{x} \in S.
  \end{equation}
\end{proposition}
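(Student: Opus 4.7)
The proof has two stages. First I will use convexity of $S$ together with the minimality of $f$ at $x$ to extract the one-sided directional estimate
\begin{equation}
 f^{\circ}(x; \tilde x - x) \geq 0 \quad \forall \tilde x \in S.
\end{equation}
Then I will promote this pointwise-in-$\tilde x$ information to a single subgradient $\xi \in \partial_C f(x)$ satisfying the desired inequality for \emph{all} $\tilde x \in S$ simultaneously, via a minimax / separation argument that exploits the weak$^*$ compactness of $\partial_C f(x)$.

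\textbf{Step 1: directional estimate.} Fix $\tilde x \in S$. Since $S$ is convex and $x, \tilde x \in S$, the segment $x + t(\tilde x - x)$ lies in $S$ for every $t \in [0,1]$. The minimality assumption gives $f(x + t(\tilde x - x)) \geq f(x)$, so the Clarke limsup
\begin{equation}
 f^{\circ}(x;\tilde x - x) = \limsup_{y \to x,\, \tau \downarrow 0} \frac{f(y + \tau(\tilde x - x)) - f(y)}{\tau}
\end{equation}
majorizes the one-sided derivative taken along $y = x$, which is $\geq 0$ by the difference quotient inequality. Hence $f^{\circ}(x;\tilde x - x) \geq 0$ for every $\tilde x \in S$.

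\textbf{Step 2: uniformization via minimax.} Set $K = \partial_C f(x)$. By the bullet-list properties already recorded in the excerpt, $K$ is a nonempty, convex, weakly$^*$ compact subset of $E^*$, and by Hahn--Banach the support function of $K$ computes the Clarke derivative: $f^{\circ}(x;v) = \max_{\xi \in K}\brt{\xi,v}$. Step~1 therefore reads
\begin{equation}
 \inf_{\tilde x \in S} \max_{\xi \in K} \brt{\xi, \tilde x - x} \geq 0.
\end{equation}
The function $(\xi, \tilde x) \mapsto \brt{\xi, \tilde x - x}$ is affine in each variable (in particular quasi-concave in $\xi$ and quasi-convex in $\tilde x$), weakly$^*$ continuous in $\xi$ for each fixed $\tilde x \in E$, and norm continuous in $\tilde x$ for each fixed $\xi \in E^*$. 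With $K$ weakly$^*$ compact and convex and $S$ convex, Sion's minimax theorem (applied with the weak$^*$ topology on $K$) exchanges the $\inf$ and $\max$, yielding
\begin{equation}
 \max_{\xi \in K} \inf_{\tilde x \in S} \brt{\xi, \tilde x - x} = \inf_{\tilde x \in S} \max_{\xi \in K} \brt{\xi, \tilde x - x} \geq 0.
\end{equation}
Since $\xi \mapsto \inf_{\tilde x \in S}\brt{\xi, \tilde x - x}$ is weakly$^*$ upper semicontinuous (an infimum of weakly$^*$ continuous affine functions) and $K$ is weakly$^*$ compact, the outer maximum is attained at some $\xi_* \in K = \partial_C f(x)$, giving $\brt{\xi_*, \tilde x - x} \geq 0$ for every $\tilde x \in S$, as required.

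\textbf{Main obstacle.} The only subtle point is justifying the minimax swap: $S$ need not be bounded, so a direct compactness argument on $S$ is unavailable, and one really must use the weak$^*$ compactness of $\partial_C f(x)$ to apply Sion's theorem. An equivalent route, which I would use as a fallback if a cleaner presentation were preferred, is to argue by contradiction: if $\partial_C f(x) \cap (-N_S(x)) = \varnothing$, then weak$^*$ compactness of $\partial_C f(x)$ together with weak$^*$ closedness of the normal cone $N_S(x)$ permits a weak$^*$ Hahn--Banach separation by a vector $v \in E$; cone-invariance of $N_S(x)$ forces $v \in T_S(x) = \overline{\operatorname{cone}(S-x)}$ via the bipolar theorem, and then $\sup_{\xi \in \partial_C f(x)} \brt{\xi, v} < 0$ contradicts Step~1 after passing to the limit along the approximating sequence $t_n(\tilde x_n - x) \to v$.
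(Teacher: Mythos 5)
The paper does not prove this proposition; it is stated as a "simplified version of \cite[Theorem~12.4.1]{Schirotzek:2007:nonsmooth-analysis}" and the reader is sent to that reference. So there is no author's proof to compare against, only the textbook.

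Your argument is correct and self-contained. Step~1 is fine: $f^\circ(x;\tilde x-x)\geq\limsup_{\tau\downarrow 0}\tau^{-1}[f(x+\tau(\tilde x-x))-f(x)]\geq 0$ because $x+\tau(\tilde x-x)\in S$ for $\tau\in[0,1]$ by convexity. Step~2 is also sound: $f^\circ(x;v)=\max_{\xi\in\partial_C f(x)}\brt{\xi,v}$ is exactly the support-function identity coming from $\partial_C f(x)$ being the set whose support functional is $v\mapsto f^\circ(x;v)$ (this is one of the bullet-point facts recorded in Section~\ref{subsec:clarke-subdif}); with $K=\partial_C f(x)$ weakly$^*$ compact convex, $S$ convex, and $(\xi,\tilde x)\mapsto\brt{\xi,\tilde x-x}$ bi-affine and separately continuous in the appropriate topologies, Sion's minimax theorem applies (only one of the two sets needs to be compact, and you correctly take the compact one to be $K$), giving $\sup_{\xi\in K}\inf_{\tilde x\in S}\brt{\xi,\tilde x-x}=\inf_{\tilde x\in S}\max_{\xi\in K}\brt{\xi,\tilde x-x}\geq 0$. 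The outer supremum is attained because $\xi\mapsto\inf_{\tilde x}\brt{\xi,\tilde x-x}$ is weakly$^*$ upper semicontinuous on the weakly$^*$ compact $K$. This produces the required $\xi_*$. The fallback separation argument (disjointness of $\partial_C f(x)$ and $-N_S(x)$, weak$^*$ Hahn--Banach, and the bipolar identity $N_S(x)^\circ=T_S(x)=\overline{\operatorname{cone}(S-x)}$, then positive homogeneity and Lipschitz continuity of $f^\circ(x;\cdot)$) is also valid and is closer in spirit to how the result is usually presented in nonsmooth-analysis texts. One minor stylistic remark: in Step~1 the phrase "the one-sided derivative taken along $y=x$" is slightly loose since that limit need not exist; what you are really using is the nonnegativity of every difference quotient along $y=x$, hence of the corresponding limsup, which the Clarke limsup dominates. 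This does not affect correctness.
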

Note that for unbounded self-adjoint operators, eigenvalues are indexed in
increasing order (from bottom to top). In contrast, for
bounded operators, we will index them in decreasing order
(from top to bottom), starting with $k=1$.
\begin{proposition}\label{prop:subdiff-calc}
  Let $H$ be a Hilbert space and
  $\mathcal{K}_{sa}[H]$ be the space of compact
  self-adjoint operators.
  If $T \in \mathcal{K}_{sa}[H]$ is a positive operator with
  $\lambda_k(T) > 0$, then the functional
  $\lambda_k\colon \mathcal{K}_{sa}[H] \to \R$ is Lipschitz on a neighborhood
  of $T$. Its Clarke subdifferential $\partial_C \lambda_k(T) \subset \mathcal{N}_{sa}[H]$
  is given by
  \begin{equation}\label{abst-clark-subdif}
    \partial_C \lambda_k(T) = co\
    \set*{\phi \otimes \phi }{
      \phi \in E_k(T),\ \norm{\phi} = 1
    },
  \end{equation}
  where $\text{co}\,K$ denotes the convex hull of the set $K$ and
  $E_k(T)$ is the eigenspace associated with $\lambda_k(T)$.
\end{proposition}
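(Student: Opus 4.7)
The plan is to verify Lipschitz continuity of $\lambda_k$ near $T$, compute its Clarke directional derivative $\lambda_k^\circ(T;V)$ explicitly, and then recover the subdifferential via support function duality.

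Lipschitz regularity is immediate from Weyl's inequality $|\lambda_k(T')-\lambda_k(T'')|\leq\|T'-T''\|$. Since $\lambda_k(T)>0$ and $T$ is compact, $\lambda_k(T)$ is an isolated eigenvalue of finite multiplicity $d := \dim E_k(T)$. Fix $\delta>0$ so that $\lambda_k(T)$ is the only spectral value of $T$ in the interval $I := [\lambda_k(T)-2\delta,\lambda_k(T)+2\delta]$. For $T'$ close to $T$ in operator norm, the spectral projector $P_{T'}$ of $T'$ onto $I$ has rank $d$ and converges in operator norm to the orthogonal projector $P_T$ onto $E_k(T)$.

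The heart of the proof is the identity
\[ \lambda_k^\circ(T; V) = \max_{\phi\in E_k(T),\ \|\phi\|=1}\langle V\phi,\phi\rangle =: f(V). \]
For the lower bound $\geq f(V)$, given a unit vector $\phi\in E_k(T)$, construct $W\in\mathcal{K}_{sa}[H]$ supported on $E_k(T)$, diagonal in an ONB of $E_k(T)$ containing $\phi$, with $W\phi=0$ and the remaining eigenvalues chosen so that $\phi$'s eigenvalue $0$ lands at position $k-k_0+1$ in the cluster after sorting (here $k_0$ denotes the smallest index with $\lambda_{k_0}(T)=\lambda_k(T)$). Then $\lambda_k(T+sW)=\lambda_k(T)$ with eigenvector $\phi$ for all small $s>0$, and first-order perturbation in $\tau$ at the fixed operator $T+sW$ yields $\lambda_k(T+sW+\tau V) = \lambda_k(T)+\tau\langle V\phi,\phi\rangle + o_s(\tau)$; letting $\tau\downarrow 0$ first, then $s\downarrow 0$ in the lim-sup defining $\lambda_k^\circ(T;V)$ gives $\lambda_k^\circ(T;V)\geq\langle V\phi,\phi\rangle$, and optimizing over $\phi$ yields the lower bound. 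For the reverse inequality $\leq f(V)$: for $\tilde T$ within $\delta/2$ of $T$ and $\tau$ small, block-diagonalization relative to $P_{\tilde T}$ shows that the $d$ cluster eigenvalues of $\tilde T+\tau V$ equal the eigenvalues of the $d\times d$ matrix $P_{\tilde T}(\tilde T+\tau V)P_{\tilde T}$ on $\mathrm{Ran}\,P_{\tilde T}$ up to $O(\tau^2/\delta)$. Weyl's inequality on this matrix then yields
\[ \lambda_k(\tilde T+\tau V)-\lambda_k(\tilde T) \leq \tau\,\mu_1\brr{P_{\tilde T}VP_{\tilde T}} + O(\tau^2/\delta), \]
where $\mu_1$ denotes the largest eigenvalue. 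Passing to the lim-sup as $\tilde T\to T$, $\tau\downarrow 0$, and using $P_{\tilde T}\to P_T$ in norm, one obtains $\lambda_k^\circ(T;V)\leq\mu_1(P_TVP_T)=f(V)$.

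The function $f$ is sublinear, positively homogeneous, and continuous on $\mathcal{K}_{sa}[H]$; under the trace duality~\eqref{eq:duality} it coincides with the support function of the weakly$^*$ compact convex set $K := \mathrm{co}\brc{\phi\otimes\phi : \phi\in E_k(T),\ \|\phi\|=1}\subset\mathcal{N}[H]$, since $\sup_{\mathfrak{t}\in K}\Tr(V\mathfrak{t}) = \sup_\phi\langle V\phi,\phi\rangle = f(V)$. By the defining characterization of $\partial_C\lambda_k(T)$ as the weakly$^*$ closed convex set with support function $V\mapsto\lambda_k^\circ(T;V)$, Hahn-Banach duality identifies $\partial_C\lambda_k(T)=K$. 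The main obstacle is the lower bound in the identity for $\lambda_k^\circ(T;V)$: the one-sided directional derivative of $\lambda_k$ at $T$ can be strictly smaller than $f(V)$ when $k$ lies in the interior of a multiplicity block, and the full Clarke derivative is recovered only by exploiting the freedom in the lim-sup to simultaneously vary the base point $\tilde T$ so that the prescribed eigenvector lands in spectral position $k$ within the cluster.
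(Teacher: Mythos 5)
Your proof is correct, and it takes a genuinely different route from the paper's. The paper reduces to the finite-dimensional case by constructing a smooth retraction $r\colon B_\epsilon(T)\to B_\epsilon(T|_E)$ and an affine inclusion $\iota$ with $r\circ\iota=\id$, then pulls back the known finite-dimensional formula (cited from Lewis) through the chain rule of Proposition~\ref{prop:clarke-compose}, checking that $(d_Tr)^*$ and $(d_{T_E}\iota)^*$ act as the identity on the extreme points $\phi\otimes\phi$. You instead compute $\lambda_k^\circ(T;V)$ from scratch: the lower bound $\lambda_k^\circ(T;V)\geq\langle V\phi,\phi\rangle$ is obtained by perturbing along a cleverly chosen $W$ supported on $E_k(T)$ so that $\phi$ occupies spectral slot $k$ in $T+sW$ (this is exactly the mechanism that turns a one-sided $\mu_j(P_TVP_T)$ into the Clarke maximum $\mu_1(P_TVP_T)$, as you observe at the end), and the upper bound comes from a Kato/Schur block estimate plus Weyl's monotonicity; the subdifferential is then identified by support-function duality, using that $E_k(T)$ is finite-dimensional so that $K=\operatorname{co}\brc{\phi\otimes\phi}$ is compact, hence weakly$^*$ closed. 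Your argument is more self-contained — it does not rely on an external finite-dimensional computation — and it makes the role of the base-point variation in the Clarke $\limsup$ explicit. The paper's reduction is more modular: the same inclusion/retraction scheme transfers other spectral first-variation formulas (e.g. Proposition~\ref{prop:approx-subdiff-calc} for the $A$-subdifferential) with no additional perturbative work, which is why the author set it up that way. One small point worth stating explicitly in a final write-up: the Schur reduction needs $\tau$ small relative to $\delta$ uniformly over $\tilde T$ in a fixed ball around $T$ so that the $O(\tau^2/\delta)$ error is uniform, and in the lower bound the remainder is only $o_s(\tau)$ (not uniform in $s$), which is why the iterated limit $\tau\downarrow 0$ then $s\downarrow 0$ — rather than a joint limit — is required.
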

\begin{proof}
  For a direct argument establishing the inclusion
  $\partial_C \lambda_k(T) \subset$ in~\eqref{abst-clark-subdif},
  see also \cite[Lemma~2.5]{vinokurov:2025:sym-eigen-val}. We present
  here an indirect approach, reducing the computations to the finite-dimensional
  case.
  For a finite-dimensional $H$, a proof can be found, e.g.,
  in~\cite[Corollary 10]{Lewis:1999:eigenval-subdiff}.

  Now, let the dimension of $H$ be arbitrary, $E := E_k(T)$, and
  decompose $H = E \oplus E^{\bot}$. This decomposition yields a
  natural linear inclusion
  $\iota_0\colon \mathcal{K}_{sa}[E] \hookrightarrow \mathcal{K}_{sa}[H]$,
  where $\iota_0(S)$ acts by zero on $E^{\bot}$. Using this, define an affine map
  $\iota\colon \mathcal{K}_{sa}[E] \hookrightarrow \mathcal{K}_{sa}[H]$
  by
  \begin{equation}
    \iota(S) = \iota_0(S - T|_E) + T
  \end{equation}
  so that $\iota(T|_E) = T$. Since $\iota$ is affine, it is obviously $C^1$
  (in fact, smooth). Note that $T|_E = \lambda \id$, where
  $\lambda = \lambda_k(T)$.

  Let $B_\epsilon(T) \subset \mathcal{K}_{sa}[H]$ and
  $B_\epsilon(T|_E) \subset \mathcal{K}_{sa}[E]$
  denote the $\epsilon$-balls around $T$ and $T|_E$. For sufficiently small
  $\epsilon$, we construct
  a smooth (even $C^1$ suffices) left inverse $r \colon B_\epsilon(T) \to B_\epsilon(T|_E)$
  to $\iota$, i.e.
  $r \circ \iota = \id$. Indeed, if $S \in B_\epsilon(T)$ and
  $P_S$ denotes the spectral projector onto the $(\dim E)$-dimensional
  space corresponding to the eigenvalues of $S$ that are
  close to $\lambda_k(T)$, then one can apply the Gram-Schmidt process
  to construct a smooth family of orthogonal transformations
  $U_S\colon E \to im\, P_S$ such that $U_T = \id$.
  We then set
  \begin{equation}
    r(S) = (U_S^* S U_S)|_E.
  \end{equation}
  See \cite[Lemma~2.5]{vinokurov:2025:sym-eigen-val}
  or \cite{Kato:1995:perturbation-theory} for more details.
  Since $P_{\iota(S)} = P_T$ for $S \in B_\epsilon(T|_E)$
  when $\epsilon$ is small enough, the Gram-Schmidt process
  yields $U_{\iota(S)} = \id$, and thus $r \circ \iota = \id$, as claimed.

  Let $s$ be the position of $\lambda_k(T)$
  when restricted to the subspace $E$, i.e. $k = k_{min} - 1 + s$,
  where $k_{min} = \min \set{k' \in \mathbb{N}}{\lambda_{k'} = \lambda_k}$.
  Then $\lambda_s(S_1) = \lambda_k(\iota(S_1))$,
  $\lambda_k(S_2) = \lambda_s(r(S_2))$, and Proposition~\ref{prop:clarke-compose} yields the inclusions
  \begin{equation}
   \partial_C \lambda_k(T) \subset
    (d_T r)^*[\partial_C \lambda_{s}(T|_E)],
  \end{equation}
  \begin{equation}\label{3-subdif-incl}
    \partial_C \lambda_{s}(T|_E) \subset (d_{T|_E}\iota)^* [\partial_C \lambda_k(T)] \subset
    \partial_C \lambda_{s}(T|_E).
  \end{equation}
  We already know the subdifferential characterization~\eqref{abst-clark-subdif}
  is valid for self-adjoint operators on $E$.
  Let $\phi \otimes \phi$, with $\phi \in E$, and consider a perturbation
  $S = T + \tau T_1$. Then
  \begin{equation}
    \brt{(d_T r)^*[\phi \otimes \phi], T_1} = \frac{d}{d\tau}\biggr|_{\tau = 0}
    \brt{SU_S \phi, U_S \phi} = \brt{T_1 \phi, \phi}
  \end{equation}
  since $\frac{d}{d\tau} U_S \phi \bot \phi$, and $T$ is diagonal on $E$.
  Therefore, the pullback $(d_T r)^*$ acts trivially:
  $\phi \otimes \phi \mapsto \phi \otimes \phi$. Similarly, one can check that
  $(d_{T|_E}\iota)^*$ acts trivially on $\phi \otimes \phi$ as well.
  Hence, both inclusions in~\eqref{3-subdif-incl} are equalities, completing the proof.
\end{proof}
Let $E_k(\mu) \subset H^1(M)$ be
the space of eigenfunctions corresponding
to $\lambda_k(\mu)$.
\begin{corollary}\label{cor:subdiff-calc-meas}
  Let $\mu \in \mathcal{M} \cap L^\infty$. Then
  the functional $\lambda_k^{-1} \colon \mathcal{M} \cap L^\infty \to \R$ has a locally
  Lipschitz extension on a neighborhood of
  $\mu$ in $L^\infty$.
  Its Clarke subdifferential can be described by the inclusion
  \begin{equation}
    \partial_C \lambda_k^{-1}(\mu) \subset co\
    \set*{\phi^2 }{
      \phi \in E_k(\mu),\ \int \abs{d\phi}^2 dv_g = 1
    },
  \end{equation}
  where $\text{co}\,K$ denotes the convex hull of the set $K$.
\end{corollary}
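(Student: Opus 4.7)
The plan is to realize $\lambda_k^{-1}$ as the composition $\mu \mapsto T(\mu) \mapsto \lambda_k(T(\mu))$, where $T(\mu) \in \mathcal{K}_{sa}[X]$ is the compact self-adjoint operator induced by the quadratic form $\mathfrak{q}(\mu)$ and the second factor is the top-indexed $k$-th eigenvalue functional. Via the continuous embedding $L^\infty \hookrightarrow \brr{H^{1,\frac{m}{m-1}}}^*$, Lemma~\ref{lem:sobolev-dual-meas} provides continuous Fréchet differentiability of $\mu \mapsto \mathfrak{q}(\mu)$ into $\mathcal{B}[X] \cong \mathcal{L}[X]$; since the derivative $d_\mu \mathfrak{q}(\nu)$ is a bilinear form that factors through the compact embedding $X \hookrightarrow L^2$, the map in fact takes values in the smaller space $\mathcal{K}_{sa}[X]$. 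Hence $\mu \mapsto T(\mu)$ is $C^1$ as a map into compact self-adjoint operators, and composing with the local Lipschitz behavior of $\lambda_k$ near $T(\mu)$ (ensured by $\lambda_k(T(\mu)) = \lambda_k^{-1}(\mu) > 0$ and Proposition~\ref{prop:subdiff-calc}) delivers the claimed locally Lipschitz extension on a neighborhood of $\mu$ in $L^\infty$.

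For the subdifferential, I apply the Clarke chain rule (Proposition~\ref{prop:clarke-compose}) to obtain
\begin{equation}
\partial_C \lambda_k^{-1}(\mu) \subset (d_\mu T)^* \brs{\partial_C \lambda_k(T(\mu))},
\end{equation}
and substitute Proposition~\ref{prop:subdiff-calc}, which identifies the right-hand set with the convex hull of rank-one tensors $\tilde\phi \otimes \tilde\phi$ for unit vectors $\tilde\phi \in E_k(T(\mu)) \subset X$. Unpacking the pullback via the trace pairing~\eqref{eq:duality}, for any direction $\nu \in L^\infty$ one computes
\begin{equation}
\brt{(d_\mu T)^*\brs{\tilde\phi \otimes \tilde\phi},\, \nu} = d_\mu \mathfrak{q}(\nu)\brs{\tilde\phi, \tilde\phi} = \int \brr{\tilde\phi - [\tilde\phi]_\mu}^2 d\nu,
\end{equation}
so the pulled-back functional is represented by $\brr{\tilde\phi - [\tilde\phi]_\mu}^2 \in L^1$.

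To match the stated form, I use the correspondence from the proof of Proposition~\ref{prop:eigen-upper-bound}: setting $\phi := \tilde\phi - [\tilde\phi]_\mu$ identifies eigenvectors of $T(\mu)$ on $X$ with eigenfunctions of $\Delta \phi = \lambda_k(\mu) \phi \mu$, and the normalization $\norm{\tilde\phi}_X^2 = \int \abs{d\tilde\phi}^2 dv_g = \int \abs{d\phi}^2 dv_g$ transfers directly. Passing to convex hulls yields the claimed inclusion. The only nontrivial technical point is verifying that $d_\mu T$ actually lands in $\mathcal{K}_{sa}[X]$ rather than only in $\mathcal{L}[X]$ — this is needed so that the chain rule can be invoked against $\lambda_k$ on a space where it is locally Lipschitz — and this is precisely where the compact factorization through $L^2$ is used. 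The rest is formal.
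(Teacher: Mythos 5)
Your proof is correct and follows essentially the same route as the paper: compose $\mu \mapsto T(\mu)$ (via Lemma~\ref{lem:sobolev-dual-meas}) with $\lambda_k$ on $\mathcal{K}_{sa}[X]$, apply the Clarke chain rule of Proposition~\ref{prop:clarke-compose} together with Proposition~\ref{prop:subdiff-calc}, and then substitute $\phi = \tilde\phi - [\tilde\phi]_\mu$. The only difference is that you make explicit two points the paper leaves implicit — the computation of the pullback $(d_\mu T)^*$ via the trace pairing, and the check that $d_\mu T$ lands in $\mathcal{K}_{sa}[X]$ (which also follows automatically from the fact that $\mathcal{K}_{sa}[X]$ is a closed subspace of $\mathcal{L}[X]$ containing the image of $T$) — but the underlying argument is identical.
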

\begin{proof}
  Since $\mu(M) \neq 0$, Lemma~\ref{lem:sobolev-dual-meas}
  gives a continuous extension of $\lambda_k^{-1}$. By the same lemma,
  we also know the Fréchet derivative of $\mathfrak{q}(\mu)$. Then one applies
  Propositions~\ref{prop:clarke-compose} and~\ref{prop:subdiff-calc}
  to deduce that
  \begin{equation}
    \partial_C \lambda_k^{-1}(\mu) \subset co\
    \set*{(\tilde{\phi}-[\tilde{\phi}]_\mu)^2 }{
      \tilde{\phi} \in E_k(T(\mu)) \subset X,\ \int \abs{d\tilde{\phi}}^2 dv_g = 1
    }.
  \end{equation}
  It remains to substitute $\phi = \tilde{\phi}-[\tilde{\phi}]_\mu \in E_k(\mu)$.
\end{proof}

\subsection{Approximate subdifferential}
The description of the Clarke subdifferential in Proposition~\ref{prop:subdiff-calc}
has the drawback that it does not explicitly reflect the position $k$
in the eigenvalue sequence when
$\lambda_k = \lambda$. To obtain a more refined characterization of
the first variation of $\lambda_k$, one can instead consider the \emph{approximate
subdifferential}.

Let $E$ be a locally convex topological vector space, $f\colon E \to [-\infty, \infty]$ be
a function on $E$, and $S \subset E$. We define
\begin{gather}
  \dom f = \set*{x \in E}{ \abs{f(x)} < \infty}, \\
  f_S(x) = \begin{dcases}
    f(x),                           & \quad x \in S      \\
    \infty,                           & \quad x \not \in S
  \end{dcases}.
\end{gather}
For $x\in \dom f$, the Dini-Hadamard lower directional derivative is defined as
\begin{equation}
  d^-_x f(v) = \liminf_{\tilde{v} \to v, \tau\downarrow 0}
  \frac{1}{\tau} \brs{f(x + \tau \tilde{v}) - f(x)},
\end{equation}
and Dini-Hadamard subdifferential is
\begin{equation}
  \partial^-f(x) = \set*{\xi \in E^*}{\brt{\xi, v}
  \leq d^-_xf(v) \ \forall v \in E}.
\end{equation}
Set $\partial^-f(x) = \varnothing$ if $x \not \in \dom f$.
\begin{definition}
  Let $\mathcal{F}$ denote the collection of all finite-dimensional
  subspaces of $E$. The \emph{approximate subdifferential}
  (also called the \emph{A-subdifferential}) of $f$ at $x$ is defined as
  \begin{equation}
    \partial_A f(x) = \bigcap_{L\in\mathcal{F}}
    \limsup_{\tilde{x}\to_f x} \partial^-f_{\tilde{x}+L}(\tilde{x}),
  \end{equation}
  where $\limsup_{\tilde{x}\to_f x} S_{\tilde{x}}$ denotes Painlevé-Kuratowski
  upper limit, i.e. the set of cluster points, of the directed
  sets $\cup_{\tilde{x}\in U(f,x,\delta)} S_{\tilde{x}}$, with
  $U(f,x,\delta):= \set*{\tilde{x} \in x + U}{\abs{f(\tilde{x})-f(x)}<\delta}$,
  in the weak$^*$ topology of $E^*$, so
  \begin{equation}
    \limsup_{\tilde{x}\to_f x} S_{\tilde{x}} =
    \bigcap_{\substack{\delta > 0 \\ U \text{is 0-nbh}}}
    \overline{\bigcup_{\tilde{x}\in U(f,x,\delta)} S_{\tilde{x}}}^*.
  \end{equation}
\end{definition}
For a systematic treatment, see, e.g., \cite{Ioffe:1984:approx-subdiff-fin-dim, Ioffe:1986:approx-subdiff-arbitrary}.
Unlike the Clarke subdifferential $\partial_C$, the approximate subdifferential $\partial_A$
is generally not convex. However, for Lipschitz functions $f$, one has the relation
(see \cite[Proposition~3.3]{Ioffe:1986:approx-subdiff-arbitrary})
\begin{equation}
  \partial_Cf(x) = \overline{co\ \partial_Af(x)}^*.
\end{equation}

If $x \in \dom f$ is a local minimizer of $f$, then
$0 \in \partial^-f_{x + L}(x)$ for any $L\in\mathcal{F}$, and hence
\begin{equation}
  0 \in \partial_A f(x).
\end{equation}

There are many other refinements of the Clarke subdifferential.
The reason we chose the approximate subdifferential is that it has
a very natural chain rule. For a more general statement, see
\cite[Theorem~4.3]{Ioffe:1986:approx-subdiff-arbitrary}.
\begin{proposition}[A-chain rule]
  Under the conditions of the Proposition~\ref{prop:clarke-compose},
  one has
  \begin{equation}
    \partial_{A} (g^*f)(x) \subset (d_x g)^*[\partial_{A} f(g(x))].
  \end{equation}
\end{proposition}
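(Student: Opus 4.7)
My plan is to mirror the structure of the Clarke chain rule proof (Proposition \ref{prop:clarke-compose}) but carry out the argument at the level of the Dini--Hadamard subdifferentials $\partial^-$ on finite-dimensional restrictions, and then pass to the approximate limit. The first step is to unpack what $\zeta\in\partial_A(g^*f)(x)$ means: for every finite-dimensional $L\subset F$, every $0$-neighborhood $U\subset F$, and every $\delta>0$, there exist $\tilde{x}\in x+U$ with $\abs{(g^*f)(\tilde{x})-(g^*f)(x)}<\delta$ and $\zeta_{L,U,\delta}\in\partial^-(g^*f)_{\tilde{x}+L}(\tilde{x})$ with $\zeta_{L,U,\delta}$ as weak${}^*$ close to $\zeta$ as desired. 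The goal is to produce $\xi\in\partial_Af(g(x))$ with $(d_xg)^*\xi=\zeta$.

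The core of the argument is a pointwise chain rule for $\partial^-$ that I would establish as follows. Fix $\tilde{x}$ near $x$ and $L\in\mathcal{F}(F)$, set $M:=d_{\tilde{x}}g(L)\in\mathcal{F}(E)$, and use the $C^1$-differentiability $g(\tilde{x}+\tau\tilde{v})=g(\tilde{x})+\tau d_{\tilde{x}}g(\tilde{v})+o(\tau)$ together with the local Lipschitz continuity of $f$ to absorb the remainder term into $o(1)$ inside the liminf. Because $d_{\tilde{x}}g|_L\colon L\to M$ is a surjective linear map between finite-dimensional spaces (hence open), reparametrizing $\tilde{w}=d_{\tilde{x}}g(\tilde{v})$ shows
\begin{equation}
 d^-_{\tilde{x}}(g^*f)_{\tilde{x}+L}(v) = d^-_{g(\tilde{x})} f_{g(\tilde{x})+M}\brr{d_{\tilde{x}}g(v)} \qquad\text{for } v\in L.
\end{equation}
Given $\zeta\in\partial^-(g^*f)_{\tilde{x}+L}(\tilde{x})$, define a linear functional $\eta$ on $M$ by $\eta(d_{\tilde{x}}g(v)):=\brt{\zeta,v}$; well-definedness on $\ker(d_{\tilde{x}}g|_L)$ follows because $d^-_{g(\tilde{x})}f_{g(\tilde{x})+M}(0)=0$ and the subdifferential inequality forces $\brt{\zeta,v}=0$ there. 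By Hahn--Banach I would extend $\eta$ to a functional $\xi\in E^*$ bounded by the local Lipschitz constant of $f$, and then $\xi\in\partial^-f_{g(\tilde{x})+M}(g(\tilde{x}))$ with $(d_{\tilde{x}}g)^*\xi=\zeta$ on $L$. Since $\zeta$ acts only through $L$ as an element of $\partial^-(g^*f)_{\tilde{x}+L}(\tilde{x})$, this recovers $\zeta$ modulo the annihilator $L^\circ$, which is harmless after taking limits with a suitably chosen $L$ exhausting $F$.

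To finish I would pass to the approximate limit. Fix any $L'\in\mathcal{F}(E)$; by continuity of $dg$, for $\tilde{x}$ close enough to $x$ one has $d_{\tilde{x}}g(L)$ close to $d_xg(L)$, so by choosing $L\subset F$ whose image $d_xg(L)$ contains $L'\cap \mathrm{im}\,d_xg$ and invoking the norm bound $\norm{\xi}\le \mathrm{Lip}(f)$, Banach--Alaoglu provides weak${}^*$ cluster points $\xi$ of the $\xi$'s produced above. Applying $(d_xg)^*$ commutes with the weak${}^*$ limit because $d_xg$ is norm-continuous, so $(d_xg)^*\xi=\zeta$ by continuity of $d_{\tilde{x}}g\to d_xg$ and the construction. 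A diagonal argument over an increasing exhaustion $L_1\subset L_2\subset\cdots$ of $F$ and the corresponding $L'$'s in $E$, together with Banach--Alaoglu compactness of bounded sets in $E^*$ in the weak${}^*$ topology, produces the required $\xi\in\partial_Af(g(x))$.

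The main obstacle I anticipate is precisely this matching of finite-dimensional test subspaces: the test spaces $L$ used in $\partial_A(g^*f)(x)$ and the test spaces $L'$ used in $\partial_Af(g(x))$ correspond only asymptotically through $d_xg$, while the Dini subdifferentials at moving basepoints $\tilde{x}$ use the pushforwards $d_{\tilde{x}}g(L)$. Handling this mismatch cleanly, while controlling the $L^\circ$-indeterminacy in the Hahn--Banach extension and ensuring the constructed $\xi$ satisfies the cluster-point requirement uniformly over the chosen test subspaces, is the delicate part and is where the flexibility of the A-subdifferential (as opposed to the Clarke version) is genuinely used.
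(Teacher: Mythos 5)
The paper itself does not prove this proposition; it states it and refers to Ioffe \cite{Ioffe:1986:approx-subdiff-arbitrary}, Theorem~4.3. So there is no in-paper proof to compare against, and your attempt must be judged on its own terms.

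Your pointwise Dini chain rule and the Hahn--Banach construction of $\xi$ on $M = d_{\tilde{x}}g(L)$ are fine, and the identity $d^-_{\tilde{x}}(g^*f)_{\tilde{x}+L}(v) = d^-_{g(\tilde{x})}f_{g(\tilde{x})+M}(d_{\tilde{x}}g(v))$ does hold because $d_{\tilde{x}}g|_L$ is an open map onto $M$ and $f$ is locally Lipschitz. The gap is in the passage to the conclusion $\xi\in\partial_A f(g(x))$, and it is genuine, not just a technicality you can wave away as ``harmless.'' Membership in $\partial_A f(g(x))$ requires $\xi$ to be a weak$^*$ cluster point of $\partial^- f_{\tilde{y}+L'}(\tilde{y})$ for \emph{every} $L'\in\mathcal{F}(E)$, including $L'$ with directions transverse to $\operatorname{im} d_x g$. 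Your construction only yields $\xi_n\in\partial^- f_{g(\tilde{x}_n)+M_n}(g(\tilde{x}_n))$ with $M_n\subset\operatorname{im} d_{\tilde{x}_n}g$; outside $M_n$ the Hahn--Banach extension is controlled only by the Lipschitz bound $\norm{\xi_n}\le K$, which is strictly weaker than the Dini inequality $\brt{\xi_n,v}\le d^-_{g(\tilde{x}_n)}f(v)$ one would need in those directions. Since $d^-_y f(v)$ can be as negative as $-K\abs{v}$, there is no reason a cluster point of the $\xi_n$ lands in $\bigcap_{L'}\limsup_{\tilde{y}}\partial^- f_{\tilde{y}+L'}(\tilde{y})$ rather than merely in $\partial_C f(g(x))$. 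In effect, your argument establishes the Clarke-type inclusion (where a single sublinear majorant $f^\circ$ governs all of $E$ and Hahn--Banach is exactly the right tool), but not the finer A-subdifferential inclusion, which is smaller on both sides of the asserted containment.

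There is also a secondary issue you gloss over: even when $L'\subset\operatorname{im} d_x g$, the subspace $M_n=d_{\tilde{x}_n}g(L)$ is only \emph{close} to $L'$, not equal, and $\partial^-f_{\tilde{y}+M}(\tilde{y})$ does not vary continuously in $M$ in any obvious sense; the nesting $\partial^-f_{\tilde{y}+M}\subset\partial^-f_{\tilde{y}+L'}$ that would rescue you requires $L'\subset M$, not proximity. Closing both gaps is precisely the content of Ioffe's proof, which uses additional machinery (fuzzy sum rules, coderivative estimates, and a careful choice of perturbed base points $\tilde{y}$ not restricted to the image $g(F)$) rather than a single Hahn--Banach extension. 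As written, your proposal identifies the obstacle correctly in its final paragraph but does not overcome it, so the proof is incomplete at the key step.
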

For an eigenvalue $\lambda_k(T)$ of the operator $T$, let us denote by
\begin{equation}
  k_{max} = \max \set{k' \in \mathbb{N}}{\lambda_{k'} = \lambda_k}.
\end{equation}
A refined first variation formula looks as follows.
\begin{proposition}\label{prop:approx-subdiff-calc}
  Let $\lambda_k(T)$ as in Proposition~\ref{prop:subdiff-calc}.
  Then the $A$-subdifferential $\partial_A \lambda_k(T) \subset \mathcal{N}[H]$
  is given by
  \begin{equation}\label{abst-approx-subdif}
    \partial_A \lambda_k(T) =
    \set*{\mathfrak{t} \in \partial_C \lambda_k(T)}{
      \rk \mathfrak{t} \leq k_{max} - k + 1
    }.
  \end{equation}
\end{proposition}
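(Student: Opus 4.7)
The plan is to mirror the reduction used in the proof of Proposition~\ref{prop:subdiff-calc}: the affine inclusion $\iota\colon \mathcal{K}_{sa}[E] \hookrightarrow \mathcal{K}_{sa}[H]$ together with its local left inverse $r\colon B_\epsilon(T) \to B_\epsilon(T|_E)$, combined now with the A-chain rule rather than the Clarke chain rule, will give two-sided inclusions between $\partial_A \lambda_k(T)$ and $\partial_A \lambda_s(T|_E)$, where $s = k - k_{min} + 1$ and $d := \dim E = k_{max} - k_{min} + 1$. Because the pullbacks $(d_T r)^*$ and $(d_{T|_E}\iota)^*$ act trivially on tensors supported on $E$ and preserve rank, this will reduce the problem to computing $\partial_A \lambda_s(\lambda \id_E)$ inside the finite-dimensional space $\mathcal{K}_{sa}[E]$.

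In this finite-dimensional setting, the A-subdifferential equals the Painlev\'e--Kuratowski upper limit of the Fr\'echet subdifferentials $\partial^- \lambda_s(\tilde{S})$ as $\tilde{S}\to \lambda\id_E$ with $\lambda_s(\tilde{S})\to\lambda$. First-order eigenvalue perturbation theory gives $d^-_{\tilde{S}}\lambda_s(T_1) = \alpha_{j+1}(T_1|_F)$, where $F$ is the $\lambda_s(\tilde{S})$-eigenspace of multiplicity $m$, the numbers $\alpha_1 \geq \cdots \geq \alpha_m$ are the eigenvalues of the compression $T_1|_F$, and $j+1$ is the position of $\lambda_s(\tilde{S})$ inside its cluster. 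I plan to test the defining inequality $\langle \mathfrak{t}, T_1\rangle \leq \alpha_{j+1}(T_1|_F)$ against $T_1 = \pm v\otimes v$ and $T_1 = \pm \id_E$ to show that $\partial^- \lambda_s(\tilde{S})$ is nonempty only when $\lambda_s(\tilde{S})$ sits at the top of its cluster ($j = 0$), and that in this case it equals $\mathrm{co}\{v\otimes v : v\in F,\ \|v\|=1\}$, i.e.\ the positive trace-one operators whose range lies in $F$. The delicate point is ruling out $1\leq j\leq m-1$, where $\alpha_{j+1}(T_1|_F)$ is neither convex nor concave in $T_1$; playing $T_1 = v\otimes v$ against $T_1 = -v\otimes v$ should force the contradictory inequalities $\langle \mathfrak{t} v, v\rangle \leq 0$ and $\langle \mathfrak{t} v, v\rangle \geq 0$ to coexist with the normalization $\mathrm{tr}(\mathfrak{t}|_F) = 1$.

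For the inclusion $\subset$, the key observation is that when $\lambda_s(\tilde{S})$ occupies positions $s, s+1, \ldots, s+m-1$ at the top of a cluster, the bound $s + m - 1 \leq d$ forces $m \leq d - s + 1 = k_{max} - k + 1$; lower semi-continuity of rank then transfers this bound to every limit point $\mathfrak{t}\in \partial_A \lambda_s(\lambda \id_E)$. For the opposite inclusion $\supset$, given $\mathfrak{t}\in \partial_C\lambda_k(T)$ of rank $r\leq k_{max}-k+1$, I will decompose $\mathfrak{t} = \sum_{i=1}^r c_i\, \phi_i\otimes\phi_i$ with $\{\phi_i\}$ orthonormal in $E$ and construct explicit perturbations $\tilde{S}_n = \lambda\id_E + \tfrac{1}{n} A$ whose spectrum has precisely $r$ equal eigenvalues in positions $s,\ldots, s+r-1$, with eigenspace $F := \mathrm{span}\{\phi_i\}$, flanked by $s-1$ strictly larger and $d-s-r+1$ strictly smaller eigenvalues. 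Then the second-paragraph computation places $\mathfrak{t}$ inside $\partial^-\lambda_s(\tilde{S}_n)$, and $\tilde{S}_n \to \lambda\id_E$ yields $\mathfrak{t}\in \partial_A$.
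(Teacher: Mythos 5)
Your proposal is correct and shares the paper's key structural idea: the inclusion $\iota\colon \mathcal{K}_{sa}[E] \hookrightarrow \mathcal{K}_{sa}[H]$ and its local $C^1$ retraction $r$, combined with the A-chain rule, reduce the statement to computing $\partial_A\lambda_s(\lambda\id_E)$ on the finite-dimensional space $\mathcal{K}_{sa}[E]$. Where you depart from the paper is the treatment of that finite-dimensional step. The paper simply cites Lewis's Corollary~10, and then also has to invoke Mordukhovich--Shao to reconcile Lewis's definition of the approximate subdifferential with Ioffe's (the one used here). You instead compute the finite-dimensional A-subdifferential directly from the definition, using first-order eigenvalue perturbation, namely $d^{-}_{\tilde S}\lambda_s(T_1)=\alpha_{j+1}\bigl(P_F T_1 P_F|_F\bigr)$ where $j+1$ is the position of $\lambda_s(\tilde S)$ inside its cluster. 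A pleasant by-product of your route is that the definition-equivalence issue disappears: for $\subset$ one uses $\partial_A f \subseteq \limsup \partial^{-}f$ (take $L=E$ in the defining intersection), and for $\supset$ one uses $\limsup \partial^{-}f \subseteq \limsup \partial^{-}f_{\,\cdot\,+L}$ for every $L$, so your explicit perturbations land in $\partial_A$ directly. The rank bound in the $\subset$ direction is indeed forced by $s+m-1\le d$ when $\lambda_s$ sits at the top of a size-$m$ cluster, together with lower semi-continuity of rank under limits. One small remark: the case $1\le j\le m-1$ you flag as delicate is in fact routine once one notes that the subgradient must be supported on the cluster eigenspace $F$ (since $\lambda_s$ near $\tilde S$ depends on $T_1$ only through $P_F T_1 P_F$ to first order), after which $T_1=\pm v\otimes v$ forces $P_F\mathfrak{t}P_F=0$ for $1\le j\le m-2$ and $T_1=\pm\id_E$ then gives a contradiction with $\operatorname{tr}\mathfrak{t}=1$; the case $j=m-1$ is killed by comparing $T_1=v\otimes v$ against $T_1=-v\otimes v$. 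In summary, same reduction, more self-contained finite-dimensional computation.
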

\begin{proof}
  The case of a finite-dimensional $H$ was proved
  in~\cite[Corollary 10]{Lewis:1999:eigenval-subdiff}. Note
  that \cite{Lewis:1999:eigenval-subdiff} uses a slightly
  different definition
  of an approximate subdifferential, but both of them coincide
  for Lipschitz-functions in a finite dimensional vector space,
  see~\cite[Theorem~9.2]{Mordukhovich-Shao:1996:subdiff-comparison}.

  Since we have exactly the same chain rule for the $A$-subdifferential,
  the rest of the proof is identical to Proposition~\ref{prop:subdiff-calc}.
\end{proof}
\begin{corollary}\label{cor:sph-dim-restric}
  Let $\mu \in L^\infty(M)$, $\mu \geq c > 0$,
  such that $\overline{\lambda}_k(\mu) = \mathcal{V}_k(g)$.
  Then there exists a number $n \leq k_{max} - k$ and a map
  \begin{equation}
    \Phi \colon M \to \Sph^n
  \end{equation}
  given by $k$-th eigenfunctions of $\mu$, i.e.
  $\Delta \phi_i = \lambda_k \phi_i \mu$.
\end{corollary}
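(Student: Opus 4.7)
The idea is to apply the variational principle for the approximate subdifferential at the maximizer $\mu$, and then transfer the rank bound of Proposition~\ref{prop:approx-subdiff-calc} to the measure side via the Fréchet-differentiable map $\mu\mapsto\mathfrak{q}(\mu)$ of Lemma~\ref{lem:sobolev-dual-meas}. Since $\mu\ge c>0$, the sign constraint $\mu\ge 0$ is inactive, and by the scale-invariance of $\overline{\lambda}_k$ we may normalize to $\mu(M)=1$. Then $\mu$ is a local minimizer of the locally Lipschitz functional $\lambda_k^{-1}\colon L^\infty(M)\to\R$ on the affine hyperplane $S=\set{\tilde\mu\in L^\infty(M)}{\tilde\mu(M)=1}$, and all tangent directions $h\in L^\infty$ with $\int h\,dv_g=0$ are freely available in a neighborhood of $\mu$.

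Invoking the A-subdifferential Lagrange multiplier rule — the normal space to $S$ at $\mu$ is $\R\cdot\1_M$ — there exists $\xi\in\partial_A\lambda_k^{-1}(\mu)$ of the form $\xi=C\cdot\1_M$ for some constant $C$. Reproducing the argument of Corollary~\ref{cor:subdiff-calc-meas}, with the A-chain rule and Proposition~\ref{prop:approx-subdiff-calc} in place of their Clarke counterparts, yields a representation
\[
  \xi=\sum_{i=1}^{r}c_i\phi_i^2,\quad c_i>0,\ \phi_i\in E_k(\mu)\text{ linearly independent},\ \int\abs{d\phi_i}^2 dv_g=1,
\]
with the crucial rank bound $r\le k_{max}-k+1$ inherited from Proposition~\ref{prop:approx-subdiff-calc}. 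The identity $\xi\equiv C$ forces $C>0$ (otherwise all $\phi_i$ would vanish, contradicting $\int\abs{d\phi_i}^2 dv_g=1$) and $\sum_{i=1}^{r}\brr{\sqrt{c_i/C}\,\phi_i}^2\equiv 1$. Hence $\Phi=(\sqrt{c_i/C}\,\phi_i)_{i=1}^{r}$ is a well-defined map $M\to\Sph^{n}$ with $n:=r-1\le k_{max}-k$, whose components solve $\Delta\phi_i=\lambda_k(\mu)\phi_i\mu$ by construction.

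\textbf{Main obstacle.} The delicate step is verifying that the rank bound in Proposition~\ref{prop:approx-subdiff-calc} is genuinely inherited on the measure side — i.e.\ that the A-chain rule composed with the affine retraction used in the proof of Proposition~\ref{prop:subdiff-calc} and with the Fréchet derivative of $\mu\mapsto T(\mu)\in\mathcal{K}_{sa}[X]$ does not inflate the tensor rank. This reduces to checking that the transposes $(d_{T|_E}\iota)^*$ and $(d_T r)^*$ act by the identity on rank-one tensors $\phi\otimes\phi$ with $\phi\in E_k(\mu)$ — already established in the proof of Proposition~\ref{prop:subdiff-calc} — together with the analogous and straightforward computation for the pull-back along $\mu\mapsto T(\mu)$.
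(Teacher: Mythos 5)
Your proof is correct and takes essentially the same route as the paper: both hinge on Lemma~\ref{lem:sobolev-dual-meas}, the A-chain rule, the rank bound of Proposition~\ref{prop:approx-subdiff-calc}, and the final observation that some convex combination $\sum c_i\phi_i^2$ of squared $k$-th eigenfunctions is constant. The one place you diverge is how the normalization constraint $\tilde\mu(M)=1$ enters. The paper dispenses with it by precomposing with the rescaling map $\tilde\mu\mapsto\tilde\mu/\tilde\mu(M)$ (whose Fréchet derivative at $\mu$ is $\nu\mapsto\nu-\nu(M)\mu$), so that $\mu$ is an \emph{unconstrained} local minimizer of $\overline{\lambda}_k^{-1}$ and only the A-chain rule is needed. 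You instead keep the constraint, restrict to the hyperplane $S$, and invoke an A-subdifferential Lagrange multiplier rule to get $\xi=C\cdot\1\in\partial_A\lambda_k^{-1}(\mu)$; this works, but the paper establishes only the A-chain rule as a proposition and merely references (in a remark, citing Ioffe) the multiplier rule for the A-subdifferential, precisely because its chain-rule-plus-retraction route avoids needing it. Both routes yield $\sum c_i\phi_i^2\equiv\lambda_k^{-1}$ with at most $k_{max}-k+1$ rank-one summands, hence a map into $\Sph^n$ with $n\le k_{max}-k$, consistent with the paper's final rescaling $\phi_i\mapsto\sqrt{\lambda_k}\,\phi_i$.
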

\begin{proof}
  Let us rescale $\mu(M) = 1$. Then
  Lemma~\ref{lem:sobolev-dual-meas} tells us that
  $T(\tilde{\mu}) \in \mathcal{K}_{sa}[X]$ and $\lambda_k(T(\tilde{\mu})) = \lambda_k^{-1}(\tilde{\mu})$
  for $\tilde{\mu} \in L^\infty$ close enough to $\mu$.
  Since $\mu$ is a maximizer of $\overline{\lambda}_k$,
  we see that $0 \in \partial_A \bar{\lambda}_k^{-1}(\mu)$.
  To compute the $A$-subdifferential, we apply A-chain rule,
  to the composition $\tilde{\mu} \mapsto \frac{\tilde{\mu}}{\tilde{\mu}(M)} \mapsto \lambda_k^{-1}(\frac{\tilde{\mu}}{\tilde{\mu}(M)}) = \bar{\lambda}_k^{-1}(\tilde{\mu})$.
  Then
  \begin{equation}
    d_\mu \brr{\frac{\tilde{\mu}}{\tilde{\mu}(M)}}(\nu) = \nu - \nu(M)\mu,
  \end{equation}
  and Proposition~\ref{prop:approx-subdiff-calc} together with
  Corollary~\ref{cor:subdiff-calc-meas} yield
  \begin{equation}
    0 \in \partial_A \bar{\lambda}_k^{-1}(\mu) \subset
    \brc{-\lambda_k^{-1}} + \set*{\sum_{i = 0}^{k_{max} - k} \phi^2_i }{
      \phi_i \in E_k(\mu),\ \sum_{i = 0}^{k_{max} - k}\int \abs{d\phi_i}^2 dv_g = 1
    }
  \end{equation}
  because $\rk \mathfrak{t} \leq k_{max} - k + 1$ means that
  $\mathfrak{t}$ admits a decomposition
  $\mathfrak{t} = \sum_{i = 0}^{k_{max} - k} \phi_i \otimes \phi_i$.
  By rescaling $\phi_i := \sqrt{\lambda_k}\phi_i$, one constructs
  a map into the unit sphere.
\end{proof}
\begin{remark}
  When $m \geq 3$, the assumptions $\mu \in L^\infty$, $\mu \geq c > 0$ can be relaxed to
  $\mu \in \mathcal{M} \cap \brr{H^{1, \frac{m}{m-1}}}^*$. In particular, $\mu$ is allowed
  to vanish on subsets of $M$. However, the argument in this setting becomes more involved.
  One must employ $A$-subdifferential calculus together with $A$-normal
  cones (see \cite{Ioffe:1986:approx-subdiff-arbitrary}), and
  apply the corresponding multiplier rule, analogous to Proposition~\ref{prop:multip-rule}.
\end{remark}

\section{Optimization of eigenvalues on \texorpdfstring{$\Sph^{m}$}{Lg}}
\begin{lemma}\label{lem:top-grassmannian-degree}
  Let $\gamma$ be the tautological bundle (of rank $k$) over $\Gr_k(\R^m)$, and
  $w_{k}(\gamma)$ be its $k$-th Stiefel-Whitney class. Then
  \begin{equation}
  w_{k}(\gamma)^{m-k} = 1
  \in \mathbb{Z}_2 \approx H^{top}(\Gr_{k}(\R^{m}); \mathbb{Z}_2).
  \end{equation}
\end{lemma}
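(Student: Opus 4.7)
The plan is to interpret $w_k(\gamma)^{m-k}$ as the mod-$2$ Euler number of a rank-$k(m-k)$ bundle over the $k(m-k)$-dimensional closed manifold $\Gr_k(\R^m)$ and evaluate it by a generic-section count. Since over $\mathbb{Z}_2$ the top Stiefel--Whitney class of a real rank-$r$ bundle coincides with its Euler class, and since $w(\gamma^*)=w(\gamma)$ mod $2$, the Whitney sum formula gives
\begin{equation}
  w_{k(m-k)}\bigl((\gamma^*)^{\oplus(m-k)}\bigr)
  = w_k(\gamma^*)^{m-k}
  = w_k(\gamma)^{m-k}.
\end{equation}
It therefore suffices to exhibit a section of $(\gamma^*)^{\oplus(m-k)}$ with a single transverse zero; pairing with the $\mathbb{Z}_2$-fundamental class will then yield the desired identity in top cohomology.

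The section will be tautological: pick $m-k$ linear functionals $\ell_1,\dots,\ell_{m-k}\in(\R^m)^*$ in general position, so that $W:=\bigcap_i\ker\ell_i$ has dimension exactly $k$, and set $s_i(V):=\ell_i|_V\in V^*=\gamma^*_V$. The joint vanishing locus $\set{V\in\Gr_k(\R^m)}{V\subset W}$ reduces to the single point $V=W$ by a dimension count, so only one candidate zero needs to be analyzed.

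The step I expect to require the most care is transversality at $V_0=W$. Under the standard identification $T_{V_0}\Gr_k(\R^m)\cong\mathrm{Hom}(V_0,\R^m/V_0)$, the differential of $(s_i)_i$ at $V_0$ sends $A$ to the tuple $\bigl(v\mapsto\ell_i(A(v))\bigr)_{i=1}^{m-k}$, where each $\ell_i$ descends to $(\R^m/V_0)^*$ because $\ell_i$ vanishes on $V_0$. The general-position hypothesis on the $\ell_i$ means precisely that they form a basis of $(\R^m/V_0)^*$, so the induced map $A\mapsto(\ell_i\circ A)_i$ from $\mathrm{Hom}(V_0,\R^m/V_0)$ to $\bigoplus_{i=1}^{m-k} V_0^*$ is an isomorphism of spaces of the same dimension $k(m-k)$. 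Hence the section is transverse at its unique zero.

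The conclusion then follows by combining the three observations: the mod-$2$ zero count of the section equals $1$, this count equals the mod-$2$ Euler number of $(\gamma^*)^{\oplus(m-k)}$, and the latter evaluates $w_k(\gamma)^{m-k}$ on $[\Gr_k(\R^m)]$. Everything apart from the transversality check is standard algebraic-topology bookkeeping (Whitney sum, identification of $w_k$ with the mod-$2$ Euler class, and Poincar\'e duality with $\mathbb{Z}_2$-coefficients on a closed, possibly non-orientable manifold).
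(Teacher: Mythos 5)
Your proposal is correct, and it takes a genuinely different route from the paper. The paper pulls the problem up to the complete flag manifold $\Fl(\R^m)$ via the projection $p\colon \Fl(\R^m) \to \Gr_k(\R^m)$, uses the explicit presentation of $H^*(\Fl(\R^m);\mathbb{Z}_2)$ as a polynomial ring modulo the relation $\prod_i(1+e_i)=1$, and evaluates the resulting top-degree monomial by the criterion that a monomial is nonzero iff its exponents are a permutation of $(m-1,m-2,\dots,0)$. Your argument stays on $\Gr_k(\R^m)$ and is purely geometric: identify the class with the mod-$2$ Euler number of $(\gamma^*)^{\oplus(m-k)}$, produce a tautological section with a single zero, and verify transversality there by a direct linear-algebra computation. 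All the steps check out: $w_k(\gamma)=w_k(\gamma^*)$ over $\mathbb{Z}_2$; the degree-$k(m-k)$ part of $w(\gamma^*)^{m-k}$ is indeed $w_k(\gamma^*)^{m-k}$ since any contributing monomial $w_{i_1}\cdots w_{i_{m-k}}$ with $\sum i_j = k(m-k)$ and $i_j\le k$ forces $i_j=k$ for all $j$; the zero locus $\{V : V\subset W\}$ with $\dim V=\dim W=k$ is exactly $\{W\}$; and under the graph parametrization $V_t=\{v+tA(v)\}$ one gets $s_i(V_t)\cong t\,(\ell_i\circ A)$, so the differential $A\mapsto(\ell_i\circ A)_i$ is an isomorphism $\mathrm{Hom}(V_0,\R^m/V_0)\to (V_0^*)^{\oplus(m-k)}$ precisely because the $\ell_i$ descend to a basis of $(\R^m/V_0)^*$. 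Your approach is more self-contained (no appeal to the structure of $H^*(\Fl;\mathbb{Z}_2)$ or the monomial-evaluation criterion), at the cost of needing the differential-topology dictionary between top Stiefel--Whitney classes and mod-$2$ zero counts of generic sections; the paper's approach is a clean ring computation once the facts about flag manifolds are granted.
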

\begin{proof}
  Consider the manifold of complete flags in $\R^m$ denoted by
  $\Fl(\R^m)$. There is a continuous bundle projection $p\colon\Fl(\R^m)
  \to \Gr_k(\R^m)$ by taking the initial $k$-th flag. Then
  see \cite{Stong:1982:products-in-grassmannians} or
  \cite[Facts~(a)--(d)]{Korbas-Novotny:2009:sw-classes-grassmann}:
  \begin{enumerate}
    \item \begin{equation}
      H^*(\Fl(\R^m);\mathbb{Z}_2) \approx
      \quot{\mathbb{Z}_2[e_1,\cdots, e_m]}{\brr{\prod_{i=1}^{m}(1+e_i) = 1}}.
    \end{equation}
    \item $p^*\colon H^*(\Gr_k(\R^m);\mathbb{Z}_2)\to H^*(\Fl(\R^m);\mathbb{Z}_2)$
    is injective and
    \begin{equation}
      p^*[w(\gamma)] = \prod_{i=1}^{k}(1+e_i).
    \end{equation}
    \item The value of $u \in \mathbb{Z}_2 \approx H^{top}(\Gr_k(\R^m);\mathbb{Z}_2)$
    is the same as the value of $p^*(u)\cdot (\prod_{i=1}^k e_{i}^{k-i})
    (\prod_{i=k+1}^m e_{i}^{m-i}) \in \mathbb{Z}_2 \approx H^{top}(\Fl(\R^m);\mathbb{Z}_2)$.
    \item The nonzero monomials in $H^{top}(\Fl(\R^m);\mathbb{Z}_2)$ are precisely
    those of the form
    \begin{equation}
      e^{m-1}_{\sigma(1)}\cdots e^{m-i}_{\sigma(i)}\cdots e^{0}_{\sigma(m)},
    \end{equation}
    i.e. those with no repeated exponents.
  \end{enumerate}
  Therefore, $p^*[w_{k}(\gamma)^{m-k}] = \prod_{i=1}^k e_i^{m-k}$, and
  it remains to compute
  \begin{equation}
    p^*[w_{k}(\gamma)^{m-k}] \cdot \brr{\prod_{i=1}^k e_{i}^{k-i}}
    \brr{\prod_{i=k+1}^m e_{i}^{m-i}}
    = \prod_{i=1}^m e_{i}^{m-i} \neq 0.
  \end{equation}
\end{proof}

\subsection{Proof of Theorem~\ref{thm:2d-eigen-sphere-upper-bound}}
  We suppose that $\lambda_{k+2}(\mu) \neq 0$ and rescale $\mu$ so that
  $\lambda_{k+2}(\mu) = 1$. We will use the variational
  characterization from Proposition~\ref{prop:eigen-upper-bound}.
  Then there exists $\brc{\phi_i}_{i=1}^{k+1} \subset H^1$ such that
  for every $\psi \in H^1$, satisfying
  \begin{equation}\label{eq:test-func-cond}
    \int \psi d\mu = \int \psi \phi_1 d\mu = \cdots
    = \int \psi \phi_{k+1} d\mu = 0,
  \end{equation}
  one has
  \begin{equation}
    \int \psi^2 d\mu \leq \int \abs{d\psi}^2
  \end{equation}
  with the equality if and only if $\psi$ is an eigenfunction corresponding
  to $\lambda_{k+2}(\mu) = 1$.

  We are going to construct such functions $\psi$ as the coordinate
  functions of a sphere-valued map.
  Consider a family of conformal mappings $T_p\colon \Sph^m \to \Sph^m$,
  \begin{equation}
    T_p(x) = \frac{1 - \abs{p}^2}{\abs{x+p}^2}\brr{x+p} +p,
  \end{equation}
  parametrized by points in the (open) unit ball $p \in \mathbb{B}^{m+1}$. In fact,
  $T_p$ is simply a dilation in the stereographic projection coordinates on the
  plane orthogonal to $p$.
  Consider also a $(m-k)$-dimensional plane $\pi \in \Gr_{m-k}(\R^{m+1})$ and define
  $\SHM{\pi}$ to be the map $\SHM{k}\colon
  \Sph^m \to \pi \cap \Sph^m \approx \mathbb{S}^{m-1-k}$. We would like
  to choose the parameters $\pi$ and $p$ so that the coordinate functions of the map
  \begin{equation}
    \Psi_{\pi, p} := T_p \circ \SHM{\pi} \colon \Sph^m \to \pi \cap \Sph^m
  \end{equation}
  meet the orthogonality conditions~\eqref{eq:test-func-cond}.

  Since $\mu \in \mathcal{B}[H^1]$, the measure $\mu$ vanishes on sets of zero capacity. Hence, one can think of
  the push-forwarded measure
  $(\SHM{\pi})_* \mu$ as a continuous measure on $\Sph^m$ supported on $\pi \cap \Sph^m$.
  The mapping
  \begin{equation}
     \Gr_{m-k}(\R^{m+1}) \ni \pi \mapsto (\SHM{\pi})_* \mu \in \mathcal{M}_c(\Sph^m)
  \end{equation}
  is clearly weakly$^*$ continuous. Thus, \cite[Corollary~5]{Laugesen:2021:center-mass}
  yields a unique point $p(\pi)$ that depends continuously on $\pi$
  such that the map $\Psi_{\pi, p(\pi)}$ is orthogonal to constant functions. Since
  \begin{equation}
    \supp [(\SHM{\pi})_* \mu] \subset \pi \cap \Sph^m \approx \Sph^{m-1-k},
  \end{equation}
  one
  could argue similarly replacing $T_p$ by its $(m-1-k)$-dimensional version. The uniqueness
  of $p(\pi)$ then implies $p(\pi) \in \pi \cap \mathbb{B}^{m+1}$,
  so $T_{p(\pi)}$ preserves $\pi \cap \Sph^m$, and therefore
  $im\,\Psi_{\pi, p(\pi)} \subset \pi$.

  To make $\Psi_{\pi, p(\pi)}$ orthogonal to the functions $\brc{\phi_i}$,
  we recall the definition of the tautological bundle $\gamma$,
  \begin{equation}
    \gamma = \set*{(\pi, v) \in \Gr_{m-k}(\R^{m+1})\times \R^{m+1}}{v\in\pi},
  \end{equation}
  and define a continuous section $s\colon\Gr_{m-k}(\R^{m+1}) \to \oplus_{i=1}^{k+1}\gamma =: (k+1)\gamma$  by the formula
  \begin{equation}
    s(\pi) = \bigoplus_{i=1}^{k+1} \int \Psi_{\pi, p(\pi)}(x) \phi_i(x) d\mu(x).
  \end{equation}
  Note that the continuity of $s$ is immediate once we know that $p(\pi)$ is continuous.
  We then claim that $(k+1)\gamma$ does not admit a nonvanishing
  continuous section. The presence of a nonvanishing
  section would imply that the
  top Stiefel-Whitney class
  of $(k+1)\gamma$ is zero, i.e.
  $w_{(m-k)(k+1)}[(k+1)\gamma] = 0$  since
  \begin{equation}
    \operatorname{rk} (k+1)\gamma = (m-k)(k+1) = \dim \Gr_{m-k}(\R^{m+1}).
  \end{equation}
  On the other hand,
  \begin{equation}
    w[(k+1)\gamma] = [1 + w_1(\gamma)+ \cdots + w_{m-k}(\gamma)]^{k+1}
    \in H^{*}(\Gr_{m-k}(\R^{m+1}); \mathbb{Z}_2),
  \end{equation}
  so
  \begin{equation}
    H^{top}(\Gr_{m-k}(\R^{m+1}); \mathbb{Z}_2) \ni w_{(m-k)(k+1)}[(k+1)\gamma] = w_{m-k}(\gamma)^{k+1} \neq 0
  \end{equation}
  by Lemma~\ref{lem:top-grassmannian-degree}, and the section $s$ must vanish at some point,
  say $\pi_0$.

  Thus, we have found parameters $\pi_0$ and $p_0 = p(\pi_0)$ such that
  the coordinate functions of $\Psi = \Psi_{\pi_0, p_0}$ satisfy~\eqref{eq:test-func-cond}.
  Using the coordinates~\eqref{sec:calc-on-sphere} and the calculation~\eqref{eq:equator-energy},
  we obtain
  \begin{equation}\label{ineq:l2-test-func}
      \overline{\lambda}_{k+2}(\mu) = 1 \cdot \mu(\Sph^m) =
      \sum_i \int (\Psi^i)^2 d\mu \leq \int \abs{d\Psi}^2
      = \frac{E[\SHM{k}]}{n \sigma_n}\int_{\Sph^{n}} \abs{dT_{p_0}}^2,
  \end{equation}
  where $\Sph^n := \pi_0 \cap \Sph^m$.
  Since $T_{p_0}|_{\Sph^n}$ is conformal,
  the Hölder inequality implies
  \begin{equation}
    \frac{1}{n \sigma_n}\int_{\Sph^{n}} \abs{dT_{p_0}}^2 \leq
    \frac{1}{n} \brr{\frac{1}{\sigma_{n}}\int_{\Sph^{n}} \abs{dT_{p_0}}^{n}}^{\frac{2}{n}}
    =\frac{1}{n} \brr{\frac{1}{\sigma_{n}}\int_{\Sph^{n}} \abs{d(\id_{\Sph^n})}^{n}}^{\frac{2}{n}}
    = 1
  \end{equation}
  Thus, we have obtained the desired upper bound.

  The equality
  in~\eqref{ineq:l2-test-func} implies that $\Psi^i$ are eigenfunctions, i.e.
  $\Delta \Psi = \mu \Psi$, and we obtain $\mu = \abs{d\Psi}^2_{g_{\Sph^m}} dv_{g_{\Sph^m}}$
  (cf. also the argument after~\eqref{eigen.map}).

  It remains to identify the equality cases in the Hölder inequality. If $n>2$,
  equality implies that $\abs{dT_{p_0}|_{\Sph^{n}}}^2$ is constant.
  Since $T_{p_0}$ is a dilation in the appropriate stereographic projection
  coordinates, it happens only if $p_0 = 0$ and $T_0 = \id_{\Sph^n}$. So,
  $\Psi = \SHM{\pi_0} =: \SHM{k}$.

  If $n=2$, the Hölder inequality used above is an identity, reflecting the conformal invariance of the Dirichlet energy in dimension two.
  In this case, $\Psi = T_p \circ \SHM{k}$ is harmonic for any $p \in \mathbb{B}^3$. Therefore,
  when $n=2$, the equality cases are precisely the generalized equator maps up to postcomposition by conformal automorphisms of $\Sph^2$.

  Finally, suppose that $k > m -  7$.
  If $\mathcal{V}_{k+2}(g_{\Sph^m}) = E[\SHM{k}]$, we have seen that all the maximizers are singular and of the form
  $\mu = \abs{d(T_p \circ\SHM{k})}^2_{g_{\Sph^m}} dv_{g_{\Sph^m}}$.
  On the one hand, the singularities of $T_p \circ\SHM{k}$
  have Hausdorff dimension $k$, which is a contradiction with
  Theorem~\ref{thm:main}.

\subsection{Proof of Theorem~\ref{thm:2d-eigen-sphere-index}}
  Thanks to the upper bounds from Theorem~\ref{thm:2d-eigen-sphere-upper-bound},
  we only need to calculate the index of $\SHM{k}$.
  Recall that $\ind \SHM{k} = \ind (\mathfrak{q}_{\SHM{k}}|_{C^\infty(\Sph^m)})$, where
  $\mathfrak{q}_{\SHM{k}}[\phi] = \int \abs{d\phi}^2 - \int \phi^2 \abs{d\SHM{k}}^2$
  and
  \begin{equation}
    \ind (\mathfrak{q}_{\SHM{k}}|_{\mathcal{D}}) =
    \sup \set*{\dim V}{V \subset \mathcal{D},\ \mathfrak{q}_{\SHM{k}}[\phi] < 0
    \ \forall \phi \in V\setminus \brc{0}}.
  \end{equation}

  Note that the singular set of $\SHM{k}$ is $\brc{0}\times\Sph^k$, where $k \leq m-2$,
  and hence has capacity zero. Let $\phi \in C^\infty$ vanish on $\brc{0}\times\Sph^k$.
  Since $\abs{d\SHM{k}}^2 dv_{g_{\Sph^m}} \sim (1-t)^{\frac{n+1}{2}-2}
  dt dv_{\theta} dv_{\omega}$ near the singularities, the Hardy inequality
  implies that
  \begin{equation}
    \int_{t \geq 1 -\epsilon} \phi^2 \abs{d\SHM{k}}^2 dv_{g_{\Sph^m}}
    \leq C\int_{t \geq 1 -\epsilon} \abs{\partial_t \phi}^2(1-t)^{\frac{n+1}{2}}dt dv_{\theta} dv_{\omega}
    \leq C\int \abs{d\phi}^2dv_{g_{\Sph^m}},
  \end{equation}
  i.e.
  $\mathfrak{q}_{\SHM{k}}$ is continuous on $H^1(\Sph^m)$. Hence,
  \begin{equation}\label{eq:dif-index-domain}
    \ind (\mathfrak{q}_{\SHM{k}}|_{\mathcal{D}_1})
    =\ind (\mathfrak{q}_{\SHM{k}}|_{\mathcal{D}_2})
    \quad\text{if}\quad
    \overline{\mathcal{D}_1}^{H^1} = \overline{\mathcal{D}_2}^{H^1}.
  \end{equation}
  In particular, one can take $\mathcal{D}_1 = C^\infty(\Sph^m)$ and
  $\mathcal{D}_2 = C^\infty_0(\Sph^n\times (0,1)\times \Sph^k)$, where
  \begin{equation}
    \Sph^n\times (0,1)\times \Sph^k \approx \Sph^m\setminus (\brc{0}\times\Sph^k \cup
    \Sph^n\times\brc{0}),
  \end{equation}
  provided that $n = m - 1 - k \leq m-2$, i.e. $k \geq 1$.

  \paragraph{Case $\mathbf{k\geq 1}$.} Let $I = (-1,1)$. We adjust the coordinates~\eqref{sec:calc-on-sphere}
  to be more symmetrical: $t \in I$ and
  \begin{equation}\label{sym-coord}
    (\theta, t, \omega) \mapsto \brr{\theta\sqrt{\tfrac{1-t}{2}}, \omega\sqrt{\tfrac{1+t}{2}}}.
  \end{equation}
  By using spherical harmonics
  $\Delta_{\theta}X_j = \rho_{j} X_j$ and
  $\Delta_{\omega}Y_i = \nu_{i} Y_i$
  on $\Sph^{n}$ and $\Sph^k$, respectively, one easily
  derives that the following embedding is dense in
  $H^1$-norm (or even in $C^\infty_0$),
  \begin{equation}
    \mathcal{D} :=
    Span \brc{C^\infty_0(I)X_j Y_i}_{i,j \geq 0}
    \hookrightarrow C^\infty_0(\Sph^{n}\times I\times\Sph^{k}).
  \end{equation}
  Thus, $\frac{1}{4}\mathfrak{q}_{\SHM{k}}|_{\mathcal{D}} = \bigoplus \mathfrak{q}_{i,j}$,
  where $\brt{t}_{x}^{y} := (1-t)^{x}(1+t)^{y}$ and
  \begin{equation}
    \mathfrak{q}_{i,j}[\phi] = \int_{-1}^{1} \dot{\phi}^2
    \brt{t}_{\frac{n+1}{2}}^{\frac{k+1}{2}} dt
    + \frac{1}{2}\int_{-1}^{1} \phi^2 \brr{\frac{\nu_{i}}{1+t} + \frac{\rho_{j} - n}{1-t}}
    \brt{t}_{\frac{n-1}{2}}^{\frac{k-1}{2}} dt,
  \end{equation}
  and by linear algebra $\ind (\mathfrak{q}_{\SHM{k}}|_{\mathcal{D}}) =
  \sum_{i,j} \ind (\mathfrak{q}_{i,j}|_{C^\infty_0(I)}) = \sum_{i}\ind (\mathfrak{q}_{i,0}|_{C^\infty_0(I)})$
  since $\rho_{j} \geq n$ and $\mathfrak{q}_{i,j}$ are nonnegative definite when $j>0$.

  By replacing $\phi = \psi\brt{t}_{-\frac{\alpha}{2}}^{\frac{\beta}{2}}$, where $\phi, \psi \in C^\infty_0(I)$,
  and integrating by parts, we see that
  \begin{equation}
    \mathfrak{q}_{i,0}[\phi] =: \mathfrak{q}_{i}[\psi] =
   \int_{-1}^{1} \dot{\psi}^2 \brt{t}_{\frac{n+1}{2}-\alpha}^{\frac{k+1}{2}+\beta} dt
    +L \int_{-1}^{1} \psi^2 \brt{t}_{\frac{n-1}{2}-\alpha}^{\frac{k-1}{2}+\beta} dt
  \end{equation}
  provided $\alpha,\beta$ satisfy the quadratic equations
  \begin{equation}\label{eq:jacobi-root}
    \begin{cases*}
      \alpha^2 - (n-1)\alpha + n = 0,
      \\ \beta^2 + (k-1)\beta - \nu_i = 0,
    \end{cases*}
  \end{equation}
  and $L = \frac{1}{4}\brs{(\nu_i-n) + \beta (n+1)- \alpha(k+1) - 2\alpha\beta}$.

  Note that the first equation is solvable exactly iff $n \geq 6 \Leftrightarrow
  k \leq m - 7$, and $\beta = \ell$ or $\beta = -(k-1+\ell)$ since
  \begin{equation}
    \nu_i \in \set{\ell(k-1+\ell)}{\ell = 0, 1,\cdots}.
  \end{equation}

  The quadratic form $\mathfrak{q}_{i}$ is
  the form generated by a shifted version of the Jacobi differential equation. The original equation looks as follows
  \begin{equation}\label{eq:Jaconi-diff}
    -\brr{\brt{t}_{a+1}^{b+1} u'}' = \lambda \brt{t}_{a}^{b} u,
  \end{equation}
  where $a = \frac{n-1}{2} - \alpha_{\pm} = \mp\frac{1}{2}\sqrt{n^2 - 6n + 1}$ and
  $b = \frac{k-1}{2} + \beta = \pm (\frac{k-1}{2} + \ell)$. Thus,
  \begin{equation}
    \ind (\mathfrak{q}_{i}|_{C^\infty_0(I)}) = \#\set*{\lambda \in Spec\,(S_F)}{\lambda + L < 0},
  \end{equation}
  where $S_F$ is the Friedrichs extension of the minimal operator
  $S_{min}$ associated with~\eqref{eq:Jaconi-diff}, see, e.g.,
  \cite[Sections~10.3,~10.5]{Zettl:2005:sturm-liouville}. If we choose
  $\beta = \ell$ and $\alpha = \alpha_{-}$ to be
  the least root of~\eqref{eq:jacobi-root},
  we will have $a,b > -1$, in which case the eigenfunctions of $S_F$ are
  the Jacobi polynomials, see, e.g., \cite[Section~14.16]{Zettl:2005:sturm-liouville} or \cite[Section~23]{Everitt:2005:sl-diff-eq}.
  The Jacobi polynomials form a complete orthogonal system in $L^2((-1,1), \brt{t}_{a}^{b} dt)$,
  so
  \begin{equation}
    Spec\,(S_F) = \set*{s(s + a + b + 1)}{s = 0,1,2,\cdots}
  \end{equation}
  is simple and discrete. Therefore,
  $L = \frac{1}{4}\brs{\ell(m-1+\ell) - n-\alpha\brr{k+1+2\ell}}$
  and
  \begin{equation}
    \begin{split}
    \ind (\mathfrak{q}_{i}|_{C^\infty_0(I)}) &= \#
    \set*{s \in \mathbb{N}}{ s^2 + (\tfrac{m-1}{2}-\alpha +\ell)s + L < 0}
    \\                    &= \#\set*{s \in \mathbb{N}}{ s < \tfrac{\alpha - \ell}{2}},
    \end{split}
  \end{equation}
  since $\tfrac{\alpha - \ell}{2}$ is the greatest root of the quadratic equation
  on  $s$. Note that $\alpha = \alpha_{-} \in (1, 2]$, so
  $\ind (\mathfrak{q}_{i}|_{C^\infty_0(I)}) = 1$ if $\ell \in \brc{0, 1}$
  and zero otherwise. At the same time, the eigenvalue $\nu_i = 0$
  ($\ell = 0$) has multiplicity $1$, and $\nu_i = k$ ($\ell = 1$)
   has multiplicity $k+1$, so
   \begin{equation}
    \ind \SHM{k} = k+2.
   \end{equation}

  \paragraph{Case $\mathbf{k = 0}$.}

  Now, one should use the coordinates $\Sph^{m-1}\times (-1,1)
  \to \Sph^m$, $(\theta, t) \mapsto (\theta \sqrt{1-t^2}, t)$ instead
  of~\eqref{sym-coord}. Arguing as in the previous case, in place of
  $\mathfrak{q}_{i,0}$, we have
  \begin{equation}
    \mathfrak{q}_{0}[\phi] = \int_{-1}^{1} \dot{\phi}^2 \brt{t}_{\frac{m}{2}}^{\frac{m}{2}} dt
    - (m - 1) \int_{-1}^{1} \phi^2 \brt{t}_{\frac{m}{2}-2}^{\frac{m}{2}-2} dt.
  \end{equation}
  By replacing $\phi = \psi\brt{t}_{-\frac{\alpha}{2}}^{-\frac{\alpha}{2}}$,
  we see that
  \begin{equation}
    \mathfrak{q}_{0}[\phi] =: \mathfrak{q}[\psi] =
   \int_{-1}^{1} \dot{\psi}^2 \brt{t}_{\frac{m}{2}-\alpha}^{\frac{m}{2}-\alpha} dt
    - \brr{\alpha + m - 1} \int_{-1}^{1} \psi^2 \brt{t}_{\frac{m}{2}-1-\alpha}^{\frac{m}{2}-1-\alpha} dt
  \end{equation}
  provided $\alpha$ satisfies the same equation~\eqref{eq:jacobi-root}, with $n = m-1$.
  Again, the quadratic form $\mathfrak{q}$ is
  generated by a shifted Jacobi differential equation, with
  $a = b = \frac{m}{2}-1 -\alpha_{\pm}$. Thus,
  \begin{equation}
    \ind (\mathfrak{q}|_{C^\infty_0(I)}) = \#\set*{\lambda \in Spec\,(S_F)}{\lambda < \alpha + m - 1}.
  \end{equation}
  Choosing $\alpha = \alpha_{-}$ to be the least root of~\eqref{eq:jacobi-root} gives
  \begin{equation}
    \begin{split}
    \ind (\mathfrak{q}|_{C^\infty_0(I)}) &= \#\set*{s \in \mathbb{N}}{ s^2 + (m - 1 - 2\alpha)s - \brr{\alpha + m - 1} < 0}
    \\                    &= \#\set*{s \in \mathbb{N}}{ s < \alpha \in (1, 2]}
    \\                    &= 2.
    \end{split}
  \end{equation}

\section{Proof of the existence and regularity}\label{sec:exis-and-reg}

\subsection{Constructing a maximizing sequence}
Denote by $\mathcal{P} = \set{\mu \in \mathcal{M}}{\mu(M) = 1}$
the space of probability measures on $M$.
\begin{proposition}\label{prop:max-sequence}
  Let $(M,g)$ be a closed connected Riemannian manifold and
  $\lambda := \mathcal{V}_k(g) =  \sup_{\mu\in L^\infty \cap \mathcal{P}} \lambda_k(\mu)$. Then there
  exist sequences $\mu_\epsilon \in L^\infty \cap \mathcal{P}$, with eigenvalues
  $\lambda_\epsilon:= \lambda_k(\mu_\epsilon) \to \lambda$
  as $\epsilon \to 0$, and a
  sequence of tensors $\mathfrak{t}_\epsilon\in
  H^1\otimes H^1$
  such that the following conditions are satisfied:
  \begin{enumerate}
    \item $\mathfrak{t}_\epsilon = \sum \phi^i_\epsilon \otimes \phi^i_\epsilon$ and
    $\Delta \phi^i_\epsilon = \lambda_\epsilon \phi^i_\epsilon
    \mu_\epsilon$ weakly;
    \item $0 \leq \m{\mathfrak{t}_\epsilon}\leq 1$ and
    $\norm{\mathfrak{t}_\epsilon}_{\tensor{H^1}}$ are uniformly bounded;
    \item\label{it:almost-sphere} $\epsilon \norm{\mu_\epsilon}_{L^\infty} \leq 1 $ and
    $v_g(\set{x \in M}{\m{\mathfrak{t}_\epsilon}(x) < 1})\leq \epsilon$.
  \end{enumerate}
\end{proposition}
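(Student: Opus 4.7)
The plan is to realize $\mu_\epsilon$ as a constrained maximizer of $\lambda_k$ on a weak-$*$ compact subset of $L^\infty\cap\mathcal{P}$, and then to extract $\mathfrak{t}_\epsilon$ from the first-order optimality via the Clarke subdifferential calculus developed in Section~\ref{subsec:clarke-subdif}. The existence and the bulk of the volume estimate will be straightforward; the delicate point will be upgrading the first-order inequality to the pointwise bound $\m{\mathfrak{t}_\epsilon} \leq 1$, which is where a subharmonicity / maximum principle argument enters.

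I consider the convex set
\begin{equation}
  S_\epsilon := \set*{\mu \in L^\infty \cap \mathcal{P}}{0 \leq \mu \leq 1/\epsilon},
\end{equation}
which is weak-$*$ compact by Banach-Alaoglu. Upper semi-continuity of $\lambda_k$ (Proposition~\ref{prop:upper-cont-and-bound}) ensures that $\lambda_\epsilon := \sup_{\mu \in S_\epsilon} \lambda_k(\mu)$ is attained at some $\mu_\epsilon$; since $\brc{S_\epsilon}$ exhausts $L^\infty \cap \mathcal{P}$ as $\epsilon \to 0$, one has $\lambda_\epsilon \to \mathcal{V}_k(g)$. The constraints $\mu_\epsilon \leq 1/\epsilon$ and $\int \mu_\epsilon\, dv_g = 1$ immediately give $\epsilon \norm{\mu_\epsilon}_{L^\infty} \leq 1$ and $v_g(\brc{\mu_\epsilon = 1/\epsilon}) \leq \epsilon$.

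Applying Clarke's multiplier rule (Proposition~\ref{prop:multip-rule}) to $\lambda_k^{-1}$ at the minimum $\mu_\epsilon$ on $S_\epsilon$, and using the subdifferential description from Corollary~\ref{cor:subdiff-calc-meas}, I produce $\xi = \sum a_i \phi_i^2$ with $a_i > 0$, $\sum a_i = 1$, and eigenfunctions $\phi_i \in E_k(\mu_\epsilon)$ normalized by $\int \abs{d\phi_i}^2 dv_g = 1$, such that $\mu_\epsilon$ minimizes $\tilde\mu \mapsto \int \xi\, \tilde\mu\, dv_g$ over $S_\epsilon$. A bang-bang / complementary slackness analysis of this linear programming condition produces a Lagrange constant $c$ with
\begin{equation}
  \xi = c \text{ on } \brc{0 < \mu_\epsilon < 1/\epsilon}, \quad \xi \leq c \text{ on } \brc{\mu_\epsilon = 1/\epsilon}, \quad \xi \geq c \text{ on } \brc{\mu_\epsilon = 0}.
\end{equation}
From $\int \xi\, d\mu_\epsilon = \sum a_i \int \phi_i^2 d\mu_\epsilon = 1/\lambda_\epsilon$ and $\xi \leq c$ on $\supp \mu_\epsilon$, one gets $c \geq 1/\lambda_\epsilon \geq 1/\mathcal{V}_k(g) > 0$. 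Setting $\mathfrak{t}_\epsilon := c^{-1} \sum a_i \phi_i \otimes \phi_i$, the identity $\m{\mathfrak{t}_\epsilon} = \xi/c$ yields $\m{\mathfrak{t}_\epsilon} = 1$ on the interior set and $\m{\mathfrak{t}_\epsilon} \leq 1$ on $\brc{\mu_\epsilon = 1/\epsilon}$, while the nuclear-norm bound follows from~\eqref{eq:tens-norm-decomp} together with $\int \xi\, dv_g \leq c\, v_g(M)$ and the lower bound on $c$.

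The main obstacle is to establish $\m{\mathfrak{t}_\epsilon} \leq 1$ on the potentially large set $\brc{\mu_\epsilon = 0}$, where the first-order analysis only yields $\xi \geq c$. The key observation is that on this set the eigenfunction equation reads $\Delta \phi_i = \lambda_\epsilon \phi_i \mu_\epsilon = 0$, so a product-rule computation gives the distributional identity $\Delta \xi = 2\lambda_\epsilon \xi\, d\mu_\epsilon - 2\sum a_i \abs{d\phi_i}^2 dv_g$ on $M$. Since $\xi \leq c$ on $\brc{\mu_\epsilon > 0}$, the nonnegative function $(\xi - c)_+ \in H^1(M)$ is supported in $\brc{\mu_\epsilon = 0}$, which annihilates the $d\mu_\epsilon$-term when used as a test function:
\begin{equation}
  \int \abs{d(\xi - c)_+}^2 dv_g = -2 \int (\xi - c)_+ \sum a_i \abs{d\phi_i}^2 dv_g \leq 0.
\end{equation}
Hence $(\xi - c)_+$ is locally constant; since $M$ is connected and $(\xi - c)_+ \equiv 0$ on the nonempty set $\brc{\mu_\epsilon > 0}$, one concludes $(\xi - c)_+ \equiv 0$, i.e., $\xi \leq c$ everywhere. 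This gives $\m{\mathfrak{t}_\epsilon} \leq 1$ on $M$ and the inclusion $\brc{\m{\mathfrak{t}_\epsilon} < 1} \subset \brc{\mu_\epsilon = 1/\epsilon}$, completing condition~(3).
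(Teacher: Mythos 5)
Your proposal matches the paper's at the structural level --- restrict to $B_{1/\epsilon}\cap\mathcal{P}$, extract $\mu_\epsilon$ by weak$^*$ compactness and upper semicontinuity, apply Clarke's multiplier rule (Proposition~\ref{prop:multip-rule}) with Corollary~\ref{cor:subdiff-calc-meas} to get the first-order condition that $\mu_\epsilon$ minimizes $\tilde\mu\mapsto\int\m{\mathfrak{t}_\epsilon}d\tilde\mu$ over the feasible set --- but the crucial saturation argument is implemented differently, and the difference is genuinely interesting. The paper argues by contradiction: it fixes $S=\|\m{\mathfrak{t}_\epsilon}\|_{L^\infty}$, assumes the density fails to saturate on $\{\m{\mathfrak{t}_\epsilon}<S-2\delta\}$, then uses the \emph{strong} maximum principle for the subharmonic function $\m{\mathfrak{t}_\epsilon}$ on the open set $\{\m{\mathfrak{t}_\epsilon}>S-\delta\}$ together with connectedness to locate a set of positive $\mu_\epsilon$-mass near $\sup\m{\mathfrak{t}_\epsilon}$, and a mass transfer then violates the first-order inequality; the renormalization constant is $S$. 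You instead introduce the LP/bathtub multiplier $c$, obtain $\xi\le c$ on $\supp\mu_\epsilon$ and $\xi\ge c$ on $\{\mu_\epsilon=0\}$, and upgrade to $\xi\le c$ globally via the test function $(\xi-c)_+$: since $(\xi-c)_+\rho_\epsilon=0$ a.e., the $\rho_\epsilon$-term in $\Delta\xi=2\lambda_\epsilon\xi\rho_\epsilon-2\sum a_i|d\phi_i|^2$ drops out, giving $\int|d(\xi-c)_+|^2\le 0$, hence $(\xi-c)_+\equiv 0$ by connectedness of $M$. Your normalization by $c$ agrees with the paper's by $S$ a posteriori, since $\{\xi<c\}\subset\{\mu_\epsilon=1/\epsilon\}$ has measure $\le\epsilon<v_g(M)$, forcing $\sup\xi=c$. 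What your route buys is a purely weak/energy argument, bypassing the $\delta$-approximation and the pointwise strong maximum principle.

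There are two small points you should make explicit. First, you assert that ``a bang-bang / complementary slackness analysis'' produces the constant $c$, but this deserves a proof, since it is effectively the heart of the first-order analysis: one should compare $\alpha:=\operatorname{ess\,inf}_{\{\mu_\epsilon<1/\epsilon\}}\xi$ with $\beta:=\operatorname{ess\,sup}_{\{\mu_\epsilon>0\}}\xi$ and show $\beta\le\alpha$ by the same mass-transfer perturbation the paper uses, after which any $c\in[\beta,\alpha]$ works. Second, you should note that $\xi=\sum a_i\phi_i^2$ is a \emph{finite} sum (because $E_k(\mu_\epsilon)$ is finite-dimensional for $\mu_\epsilon\in L^\infty$) and that each $\phi_i\in H^{2,p}$ by elliptic regularity, so $\xi\in H^{1,p}$ for all $p<\infty$ and $(\xi-c)_+\in H^1(M)$ is an admissible test function; the paper records exactly this regularity for the same reason.
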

\begin{proof}
    Consider the ball
  $B_{1/\epsilon}=B_{1/\epsilon}(0) \subset L^\infty$ of
  radius $1/\epsilon$ centred at zero.
  If
  \begin{equation}
    \lambda_\epsilon := \sup \set{\lambda_k(\mu)}
    {\mu \in B_{1/\epsilon}\cap \mathcal{P}},
  \end{equation}
  it is clear that $\lambda_\epsilon \to \lambda$ as $\epsilon \to 0$.
  At the same time, $B_{1/\epsilon}\cap \mathcal{P}$ is a bounded
  weakly$^*$ closed subset of $L^\infty$, hence weakly$^*$ compact.
  Proposition~\ref{prop:upper-cont-and-bound} implies that
  the functional $\mu \to \lambda_k(\mu)$ is bounded on $L^\infty \cap\mathcal{P}$
  and upper semi-continuous under the weak$^*$ convergence.
  So, one easily finds an element $\mu_\epsilon \in B_{1/\epsilon}\cap \mathcal{P}$ achieving
  $\lambda_\epsilon = \lambda_k(\mu_\epsilon)$.

  The next step is to apply Corollary~\ref{cor:subdiff-calc-meas} together
  with Proposition~\ref{prop:multip-rule}
  to obtain $\mathfrak{t}_\epsilon \in \partial_C \lambda_k^{-1}(\mu_\epsilon)$
  such that
  \begin{equation}
    \mathfrak{t}_\epsilon := \sum_i \phi^i_\epsilon \otimes \phi^i_\epsilon,
  \end{equation}
  \begin{equation}\label{ineq:extremal-condition}
    \int \m{\mathfrak{t}_\epsilon} d\mu_\epsilon
    \leq \int \m{\mathfrak{t}_\epsilon} d\mu
  \end{equation}
  for any $\mu \in B_{1/\epsilon}\cap \mathcal{P}$,
  and
  \begin{equation}\label{ineq:sup-norm-lower-bound}
    1 = \int\md{\mathfrak{t}_\epsilon} dv_g =
  \lambda_\epsilon \int \m{\mathfrak{t}_\epsilon} d\mu_\epsilon
  \leq \lambda_\epsilon \norm{\m{\mathfrak{t}_\epsilon}}_{L^\infty}.
  \end{equation}

  Since $\mu_\epsilon = \rho_\epsilon \dv{g}$,
  $\mu = \rho \dv{g}$ for some $\rho_\epsilon, \rho \in L^\infty$,
  the elliptic regularity implies $\phi_\epsilon^i \in H^{2,p}(M)$
  for any $p < \infty$, so all $\brc{\phi_\epsilon^i}$ and
  $\m{\mathfrak{t}_\epsilon}$ are continuous.
  Then we claim that
  inequality~\eqref{ineq:extremal-condition} implies
  \begin{equation}\label{eq:const-density}
    \rho_\epsilon \equiv 1/\epsilon
    \text{ a.e. on }
    \set{x\in M}{\m{\mathfrak{t}_\epsilon}(x) < S},
  \end{equation}
  where $S:= \norm{\m{\mathfrak{t}_\epsilon}}_{L^\infty} \geq \lambda^{-1}$
  by~\eqref{ineq:sup-norm-lower-bound}. If we did so,
  one could check that the renormalized tensors
  $\tilde{\mathfrak{t}}_\epsilon := \mathfrak{t}_\epsilon/S$
  satisfy the conditions of the proposition.

  If~\eqref{eq:const-density} does not hold, then
  for sufficiently small $\delta > 0$, one has
  $(1/\epsilon-\rho_\epsilon)\chi_A \not\equiv 0$, where
  $\chi_A$ is the characteristic function of the set
  \begin{equation}
    A:= \set{x\in M}{\m{\mathfrak{t}_\epsilon}(x) < S - 2\delta}.
  \end{equation}
  Since $\m{\mathfrak{t}_\epsilon}$ is continuous, the set
  \begin{equation}
    C := \set{x \in M}{S - \delta < \m{\mathfrak{t}_\epsilon}(x)}
  \end{equation}
  is open (and nonempty by definition), and one also has $\rho_\epsilon \chi_C \not \equiv 0$,
  or else we would apply the maximum
  principle to the subharmonic function
  $\m{\mathfrak{t}_\epsilon}$ on $C$ and conclude that $\m{\mathfrak{t}_\epsilon} \equiv S$
  on $M$, which is connected.
  That allows us
  to take
  \begin{equation}
    \rho = \rho_\epsilon + t\brr{
      \frac{(1/\epsilon-\rho_\epsilon)\chi_A}
        {\norm{(1/\epsilon-\rho_\epsilon)\chi_A}}_{L^1}
      - \frac{\rho_\epsilon \chi_C}
        {\norm{\rho_\epsilon \chi_C}}_{L^1}
    } \in B_{1/\epsilon}\cap \mathcal{P}
  \end{equation}
  for small enough $t$.
  After plugging in $\mu = \rho \dv{g}$ into
  inequality~\eqref{ineq:extremal-condition}, one obtains
  \begin{equation}
    S -\delta \leq S - 2\delta,
  \end{equation}
  which is a contradiction.
\end{proof}

\subsection{Convergence and regularity near stable points}
Consider a sequence of tensors $\mathfrak{t}_\epsilon$
generated by Proposition~\ref{prop:max-sequence}. It is bounded
in $H^1\hat{\otimes}_\pi H^1 \approx \mathcal{N}[H^1]$, so
passing to a subsequence, we have a tensor
$\mathfrak{t} \in \tensor{H^1}$ and a Radon measure
$\mu$ such that $\mu_{\epsilon} \oset{w^*}{\to} \mu$
in $\mathcal{M}$ and $\mathfrak{t}_{\epsilon} \oset{w^*}{\to}\mathfrak{t}$
in $\tensor{H^1}$. By Corollary~\ref{lem:l-inf-convergence},
property~\ref{it:almost-sphere} of the proposition then implies
\begin{equation}\label{eq:limit-map-to-sphere}
  \m{\mathfrak{t}} \equiv 1 \quad\text{ on } M
\end{equation}
\begin{lemma}\label{lem:strong-conv}
  Let
  ${\mu_\epsilon \overset{w^*}{\to} \mu}$ in
  $\mathcal{M}(\Omega)$, and let all the points of $\Omega$ be stable
  for $\brc{\mu_\epsilon}$.
  Let also
  $\mathfrak{t}_\epsilon = \sum_i \phi^i_\epsilon \otimes \phi^i_\epsilon \oset{w^*}{\to} \mathfrak{t}$ in
  $\tensor{H^1(\Omega)}$
  such that
  \begin{enumerate}
    \item $\Delta\phi^i_\epsilon = \phi^i_\epsilon
    \mu_\epsilon$ weakly;
    \item \label{it:lower-than}$\begin{aligned}
      \limsup_{\epsilon\to0}\int
      \brr{\m{\mathfrak{t}_\epsilon} - \m{\mathfrak{t}}}\omega^2 d\mu_\epsilon \leq 0
      \quad\forall \omega \in C^\infty_{0}(\Omega)
    \end{aligned}$
  \end{enumerate}
  Then up to a subsequence,
  \begin{equation}
    \mathfrak{t}_\epsilon \to \mathfrak{t} \text{ (strongly) in } \tensor{H^1(\Omega')}
    \quad\text{and}\quad
    \mu_\epsilon \overset{w^*}{\to} \mu \text{ in }
    \mathcal{B}[H^1_{0}(\Omega')]
  \end{equation}
  for any $\Omega' \Subset \Omega$, i.e. $\int \m{\tau} d\mu_\epsilon \to \int \m{\tau} d\mu$ for any fixed
  $\tau \in \tensor{H^1_{0}(\Omega')}$.
\end{lemma}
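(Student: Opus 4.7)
The plan is to upgrade the weak$^*$ convergence $\mathfrak{t}_\epsilon \overset{w^*}{\to} \mathfrak{t}$ to strong nuclear-norm convergence on each $\Omega' \Subset \Omega$ via Proposition~\ref{prop:tensor-convergence}, by matching an upper bound on $\int \md{\mathfrak{t}_\epsilon}\omega^2 dv_g$ (arising from the eigenvalue equation and hypothesis~\ref{it:lower-than}) with a lower bound from weak$^*$ lower semicontinuity of the nuclear norm. Two preliminary facts I would record at the outset: (i)~since every point of $\Omega$ is stable for $\brc{\mu_\epsilon}$, Remark~\ref{rem:quadr-form-bound} combined with a partition of unity subordinate to a finite cover of $\overline{\Omega'}$ by stable neighbourhoods yields a constant $C = C(\Omega')$ with $\int \phi^2 d\mu_\epsilon \leq C\norm{\phi}_{H^1}^2$ for all $\phi \in H^1_{0}(\Omega')$, so $\brc{\mu_\epsilon}$ is uniformly bounded in $\mathcal{B}[H^1_0(\Omega')]$; and (ii)~Corollary~\ref{lem:l-inf-convergence} gives $\m{\mathfrak{t}_\epsilon} \to \m{\mathfrak{t}}$ in $L^1(\Omega')$.

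The core identity is obtained by testing the weak equation $\Delta\phi^i_\epsilon = \phi^i_\epsilon \mu_\epsilon$ against $\phi^i_\epsilon \omega^2$ with $\omega \in C^\infty_0(\Omega)$ and summing over $i$:
\begin{equation}
  \int \md{\mathfrak{t}_\epsilon}\omega^2 dv_g
  + 2\sum_i \int \omega \phi^i_\epsilon \brt{d\phi^i_\epsilon, d\omega} dv_g
  = \int \m{\mathfrak{t}_\epsilon}\omega^2 d\mu_\epsilon.
\end{equation}
The cross-term is controlled by $\brr{\int \m{\mathfrak{t}_\epsilon}\abs{d\omega}^2 dv_g}^{1/2}\brr{\int \md{\mathfrak{t}_\epsilon}\omega^2 dv_g}^{1/2}$ via Cauchy--Schwarz (pointwise and over $i$), whose first factor converges by (ii). Hypothesis~\ref{it:lower-than} then gives $\limsup \int \m{\mathfrak{t}_\epsilon}\omega^2 d\mu_\epsilon \leq \limsup \int \m{\mathfrak{t}}\omega^2 d\mu_\epsilon$, and I would identify the latter with $\int \m{\mathfrak{t}}\omega^2 d\mu$ by writing $\m{\mathfrak{t}} = \sum (\phi^i)^2$, approximating each $\phi^i \in H^1$ by smooth compactly supported functions and using the uniform $H^1_0$-bound from (i) to absorb the approximation error, while exchanging series and limit by dominated convergence using $\sum \norm{\phi^i}_{H^1}^2 < \infty$. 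The matching lower bound comes from applying Proposition~\ref{prop:weak-star-continuity} to the bounded operator $A_\omega \colon H^1(\Omega) \to L^2(\Omega; T^*M)$, $A_\omega \phi = \omega d\phi$: one obtains $(A_\omega \hat{\otimes}_\pi A_\omega)(\mathfrak{t}_\epsilon) \overset{w^*}{\to} (A_\omega \hat{\otimes}_\pi A_\omega)(\mathfrak{t})$ in $\mathcal{N}[L^2]$, whose (positive) nuclear norms are exactly $\int \md{\cdot}\omega^2 dv_g$ by~\eqref{eq:positive-tens-norm}, and weak$^*$ lower semicontinuity of the nuclear norm yields $\int \md{\mathfrak{t}}\omega^2 dv_g \leq \liminf \int \md{\mathfrak{t}_\epsilon}\omega^2 dv_g$. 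Combining the two bounds with a cut-off $\omega$ equal to $1$ on $\Omega'$ (so that the gradient-error is supported outside $\Omega'$) and invoking~\eqref{eq:tens-norm-decomp} together with the $L^1$-convergence of $\m{\mathfrak{t}_\epsilon}$, I would conclude $\norm{\mathfrak{t}_\epsilon|_{\Omega'}}_{\mathcal{N}[H^1(\Omega')]} \to \norm{\mathfrak{t}|_{\Omega'}}_{\mathcal{N}[H^1(\Omega')]}$, and Proposition~\ref{prop:tensor-convergence} promotes the weak$^*$ convergence to strong convergence.

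For the second conclusion, I fix $\tau \in \mathcal{N}[H^1_0(\Omega')]$, reduce to its positive part $\tau = \sum \psi^i \otimes \psi^i$ with $\sum \norm{\psi^i}_{H^1}^2 < \infty$, approximate each $\psi^i$ by $\psi^i_n \in C^\infty_0(\Omega')$ in $H^1_0$, and use the weak$^*$ convergence $\mu_\epsilon \to \mu$ against the continuous compactly supported test functions $(\psi^i_n)^2$; the bilinear-form bound from (i) controls the approximation error uniformly in $\epsilon$, and the series is exchanged with the limit by dominated convergence. The main obstacle, I expect, is the passage $\int \m{\mathfrak{t}}\omega^2 d\mu_\epsilon \to \int \m{\mathfrak{t}}\omega^2 d\mu$ indicated above: the limit object $\m{\mathfrak{t}}$ is only an $L^1$ function, so it cannot be tested directly against the Radon measures $\mu_\epsilon$, and one must exploit the stability-derived $H^1_0$-bound summand-by-summand in the decomposition $\m{\mathfrak{t}} = \sum (\phi^i)^2$, carefully tracking a double approximation to transfer the weak$^*$ convergence in $\mathcal{M}(\Omega)$ to convergence against this $L^1$ test object.
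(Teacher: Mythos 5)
Your plan diverges from the paper's proof at a crucial point, and I believe the divergence introduces a genuine gap. Your core identity, obtained by testing $\Delta\phi^i_\epsilon = \phi^i_\epsilon\mu_\epsilon$ against $\phi^i_\epsilon\omega^2$ and summing in $i$, reads
\begin{equation}
  \int \md{\mathfrak{t}_\epsilon}\omega^2\,dv_g
  + 2\sum_i \int \omega\,\phi^i_\epsilon\brt{d\phi^i_\epsilon, d\omega}\,dv_g
  = \int \m{\mathfrak{t}_\epsilon}\omega^2\,d\mu_\epsilon,
\end{equation}
and the cross-term does \emph{not} vanish as $\epsilon\to 0$, since $\phi^i_\epsilon$ itself is of order one. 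The Cauchy--Schwarz bound you propose returns a factor $\brr{\int\md{\mathfrak{t}_\epsilon}\omega^2}^{1/2}$, which is exactly the quantity being estimated, so it cannot be absorbed sharply and the resulting inequality is not tight enough to match the lower bound from weak$^*$ semicontinuity. Even if one replaces the Cauchy--Schwarz step by the correct observation that the cross-term is $\Tr(K\mathfrak{t}_\epsilon)$ for a compact operator $K$ on $H^1(\Omega)$ (so it converges to $\Tr(K\mathfrak{t})$ by weak$^*$ continuity), the argument still does not close: it yields $\limsup\int\md{\mathfrak{t}_\epsilon}\omega^2 \leq \int\m{\mathfrak{t}}\omega^2\,d\mu - 2\Tr(K\mathfrak{t})$, and to identify the right-hand side with $\int\md{\mathfrak{t}}\omega^2$ one would need the limiting eigenvalue equation $\Delta\phi^i = \phi^i\mu$ — but that is the content of Lemma~\ref{lem:regularity-in-good-pt}, which logically depends on the present lemma and cannot be invoked here.

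The missing device is Lemma~\ref{lem:tensor-decomp}. The paper uses the weak$^*$ convergence $\mathfrak{t}_\epsilon \overset{w^*}{\to}\mathfrak{t}$ to produce finite-rank orthogonal projectors $\mathfrak{p}_\epsilon$ (with complement $\mathfrak{q}_\epsilon = 1 - \mathfrak{p}_\epsilon$) satisfying $\mathfrak{p}_\epsilon\mathfrak{t}_\epsilon\mathfrak{p}_\epsilon \to \mathfrak{t}$ \emph{strongly} in $\tensor{H^1}$ and $\mathfrak{q}_\epsilon\mathfrak{t}_\epsilon\mathfrak{q}_\epsilon \overset{w^*}{\to} 0$, hence $\m{\mathfrak{q}_\epsilon\mathfrak{t}_\epsilon\mathfrak{q}_\epsilon}\to 0$ in $L^1$ by Corollary~\ref{lem:l-inf-convergence}. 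The equation is then tested against $\mathfrak{q}_\epsilon(\phi^i_\epsilon)\omega^2$ rather than $\phi^i_\epsilon\omega^2$; the point is that $\mathfrak{q}_\epsilon(\phi^i_\epsilon)$ vanishes in $L^2$, so every cross-term involving $\mathfrak{q}_\epsilon(\phi^i_\epsilon)\,d\omega$ is bounded by $\sqrt{\m{\mathfrak{q}_\epsilon\mathfrak{t}_\epsilon\mathfrak{q}_\epsilon}}$ and goes to zero. Combining this with the stability inequality~\eqref{ineq:quadr-form-bound} applied to $\mathfrak{q}_\epsilon(\phi^i_\epsilon)\omega$, and with the algebraic identity $\mathfrak{q}_\epsilon\mathfrak{t}_\epsilon + \mathfrak{t}_\epsilon\mathfrak{q}_\epsilon - \mathfrak{q}_\epsilon\mathfrak{t}_\epsilon\mathfrak{q}_\epsilon = \mathfrak{t}_\epsilon - \mathfrak{p}_\epsilon\mathfrak{t}_\epsilon\mathfrak{p}_\epsilon$, one obtains $\limsup\int\brr{\md{\mathfrak{t}_\epsilon}-\md{\mathfrak{p}_\epsilon\mathfrak{t}_\epsilon\mathfrak{p}_\epsilon}}\omega^2 \leq \limsup\int\brr{\m{\mathfrak{t}_\epsilon}-\m{\mathfrak{p}_\epsilon\mathfrak{t}_\epsilon\mathfrak{p}_\epsilon}}\omega^2\,d\mu_\epsilon \leq 0$, where the last step uses hypothesis~\ref{it:lower-than} together with the strong convergence of $\mathfrak{p}_\epsilon\mathfrak{t}_\epsilon\mathfrak{p}_\epsilon$. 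Since $\int\md{\mathfrak{p}_\epsilon\mathfrak{t}_\epsilon\mathfrak{p}_\epsilon}\omega^2\to\int\md{\mathfrak{t}}\omega^2$, this yields the sharp upper bound needed for Proposition~\ref{prop:tensor-convergence}. Without a decomposition of this kind, your scheme cannot produce the required norm convergence. (Your treatment of the second conclusion — weak$^*$ convergence of $\mu_\epsilon$ in $\mathcal{B}[H^1_0(\Omega')]$ via approximation by smooth compactly supported functions — is essentially sound, though the paper argues more simply by weak$^*$ precompactness of $\brc{\mu_\epsilon}$ in $\mathcal{B}[H^1_0(U)]$ and uniqueness of the accumulation point.)
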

\begin{proof}
  Since the norms are local, it is enough to show that there exists a
  neighborhood $W \subset \Omega$ of a point
  $p$, such that the stated
  convergences take place.

  Let us choose a neighborhood $U \subset \Omega$ and a subsequence $\brc{\mu_\epsilon}$
  from the definition of a stable point. Then
  Remark~\ref{rem:quadr-form-bound} implies
  that $\brc{\mu_\epsilon}$ is bounded in
  $\mathcal{B}[H^1_{0}(U)]$, hence precompact in the weak$^*$
  topology and $\mathcal{B}[H^1_{0}(U)] = \tensor{H^1_0(U)}^*$. By looking at the integrals
  $\int \phi^2 d\mu_\epsilon$ of smooth functions
  $\phi \in C^\infty_{0}(U)$, one sees that $\mu$ is the only
  accumulation point of $\brc{\mu_\epsilon}
  \subset\mathcal{B}[H^1_{0}]$.

  Now, $\mathfrak{t}_{\epsilon}$ converges weakly$^*$ to $\mathfrak{t}$ in $\tensor{H^{1}}$,
  and hence strongly in $\tensor{L^2}$ by Corollary~\ref{lem:l-inf-convergence}.
  Let us make use of
  Lemma~\ref{lem:tensor-decomp}. We have $\mathfrak{p}_\epsilon + \mathfrak{q}_\epsilon = 1$,
  \begin{equation}
    \mathfrak{p}_\epsilon\mathfrak{t}_\epsilon \mathfrak{p}_\epsilon = \sum_i \mathfrak{p}_\epsilon(\phi^i_\epsilon) \otimes \mathfrak{p}_\epsilon(\phi^i_\epsilon)
    \to \mathfrak{t} \text{ in } \tensor{H^{1}}
  \end{equation}
  and
  \begin{equation}
    \mathfrak{q}_\epsilon\mathfrak{t}_\epsilon \mathfrak{q}_\epsilon =
    \sum_i \mathfrak{q}_\epsilon (\phi^i_\epsilon) \otimes \mathfrak{q}_\epsilon(\phi^i_\epsilon)
    \overset{w^*}{\to} 0 \quad\text{in } \tensor{H^{1}}.
  \end{equation}
  This again implies
  \begin{equation}\label{eq:q-to-zero}
    \m{\mathfrak{q}_\epsilon\mathfrak{t}_\epsilon \mathfrak{q}_\epsilon} \to 0 \quad\text{in  } L^1.
  \end{equation}

  Let us choose a neighborhood $W \Subset U$ of $p$ and a function
  $\omega \in C^\infty_0(U)$ such that $\omega|_W \equiv 1$.
  For weakly$^*$ converging sequences (see Remark~\ref{rem:weak-star-conv}), one has
  \begin{equation}
    \norm{\mathfrak{t}}_{\tensor{H^1(W)}}
    \leq \liminf_{\epsilon\to0} \norm{\mathfrak{t}_\epsilon}_{\tensor{H^1(W)}}.
  \end{equation}
  It remains to show that
  \begin{equation}
    \limsup_{\epsilon\to0} \int \brr{\md{\mathfrak{t}_\epsilon}
    -\md{\mathfrak{p}_\epsilon\mathfrak{t}_\epsilon\mathfrak{p}_\epsilon}}\omega^2dv_g
    \leq 0.
  \end{equation}
  This would imply $\norm{\mathfrak{t}}_{\tensor{H^1(W)}} = \lim_{\epsilon\to0} \norm{\mathfrak{t}_\epsilon}_{\tensor{H^1(W)}}$, and therefore,
  $\mathfrak{t}_\epsilon \to \mathfrak{t}$ in $\tensor{H^1(W)}$ by Proposition~\ref{prop:tensor-convergence}.

  The strong convergence~\eqref{eq:q-to-zero} leads to
  \begin{gather}
    \sum_i\int \abs{d\omega}^2 \abs{\mathfrak{q}_\epsilon(\phi^i_\epsilon)}^2 dv_g
    = \int \abs{d\omega}^2 \m{\mathfrak{q}_\epsilon\mathfrak{t}_\epsilon \mathfrak{q}_\epsilon} dv_g \to 0
    \\ \shortintertext{and by the Cauchy--Schwarz inequality,}
    \abs{\sum_i\int \brt{\omega d\mathfrak{q}_\epsilon(\phi^i_\epsilon),
    \mathfrak{q}_\epsilon(\phi^i_\epsilon) d\omega} dv_g}
    =\abs{\int \sum_i\brt{\omega d\mathfrak{q}_\epsilon(\phi^i_\epsilon),
    \mathfrak{q}_\epsilon(\phi^i_\epsilon) d\omega} dv_g}
    \\\leq \int \abs{\omega}\sqrt{\md{\mathfrak{q}_\epsilon \mathfrak{t}_\epsilon \mathfrak{q}_\epsilon}}
    \sqrt{\m{\mathfrak{q}_\epsilon \mathfrak{t}_\epsilon \mathfrak{q}_\epsilon}}\abs{d\omega} dv_g \to 0,
  \end{gather}
  since $\norm{\md{\mathfrak{t}_\epsilon}}_{L^1}$ is bounded and
  $\norm{\m{\mathfrak{q}_\epsilon \mathfrak{t}_\epsilon \mathfrak{q}_\epsilon}}_{L^1} \to 0$.
  Therefore by~\eqref{ineq:quadr-form-bound},
  \begin{equation}\label{ineq:from-quadr-form-bound}
    \begin{gathered}
      \int\m{\mathfrak{q}_\epsilon \mathfrak{t}_\epsilon \mathfrak{q}_\epsilon} \omega^2d\mu_\epsilon
      =\sum_i\int \abs{\mathfrak{q}_\epsilon (\phi^i_\epsilon)\omega}^2d\mu_\epsilon
      \leq \sum_i\int \abs{\omega d\mathfrak{q}_\epsilon (\phi^i_\epsilon) +
      \mathfrak{q}_\epsilon (\phi^i_\epsilon)d\omega}^2 dv_g
      \\\leq \sum_i\int \abs{d\mathfrak{q}_\epsilon (\phi^i_\epsilon)}^2 \omega^2dv_g +o(1)
      = \int\md{\mathfrak{q}_\epsilon\mathfrak{t}_\epsilon\mathfrak{q}_\epsilon} \omega^2dv_g +o(1)
    \end{gathered}
  \end{equation}
  as $\epsilon \to 0$. In a similar way,
  \begin{gather}
    \sum_i\int \brt{d\phi_\epsilon^i, \mathfrak{q}_\epsilon(\phi^i_\epsilon)d\omega^2}dv_g
    =\int \sum_i\brt{d\phi_\epsilon^i, d\omega^2}\mathfrak{q}_\epsilon(\phi^i_\epsilon) dv_g
    \\\leq \int \abs{d\omega^2} \sqrt{\md{\mathfrak{t}_\epsilon}}\sqrt{\m{\mathfrak{q}_\epsilon \mathfrak{t}_\epsilon \mathfrak{q}_\epsilon}}dv_g
    \to  0.
  \end{gather}
  So, testing the equation $\Delta \phi^i_\epsilon = \phi^i_\epsilon \mu_\epsilon$
  against $\mathfrak{q}_\epsilon (\phi^i_\epsilon)\omega^2$, one obtains
  \begin{equation}\label{ineq:from-eigen-func-eq}
    \begin{gathered}
      \int \brr{\md{\mathfrak{q}_\epsilon\mathfrak{t}_\epsilon} + \md{\mathfrak{t}_\epsilon \mathfrak{q}_\epsilon}}\omega^2dv_g
      =2\sum_i\int \brt{d\phi_\epsilon^i, d\mathfrak{q}_\epsilon (\phi^i_\epsilon)}\omega^2dv_g
      \\= 2\sum_i\int\brt{\phi_\epsilon^i, \mathfrak{q}_\epsilon (\phi^i_\epsilon)}\omega^2d\mu_\epsilon+o(1)
      = \int \brr{\m{\mathfrak{q}_\epsilon\mathfrak{t}_\epsilon }+\m{\mathfrak{t}_\epsilon \mathfrak{q}_\epsilon}}\omega^2d\mu_\epsilon+o(1).
    \end{gathered}
  \end{equation}
  Now, one can
  sum~\eqref{ineq:from-eigen-func-eq} with~\eqref{ineq:from-quadr-form-bound}
  by using the equality
  \begin{equation}
    \mathfrak{q}_\epsilon\mathfrak{t}_\epsilon + \mathfrak{t}_\epsilon \mathfrak{q}_\epsilon
    = 2\mathfrak{q}_\epsilon\mathfrak{t}_\epsilon\mathfrak{q}_\epsilon +
    \mathfrak{p}_\epsilon\mathfrak{t}_\epsilon\mathfrak{q}_\epsilon
    +\mathfrak{q}_\epsilon\mathfrak{t}_\epsilon\mathfrak{p}_\epsilon
  \end{equation}
  to obtain
  \begin{equation}
    \int \brr{\md{\mathfrak{t}_\epsilon} - \md{\mathfrak{p}_\epsilon\mathfrak{t}_\epsilon \mathfrak{p}_\epsilon}}\omega^2dv_g
    \leq \int \brr{\m{\mathfrak{t}_\epsilon} - \m{\mathfrak{p}_\epsilon\mathfrak{t}_\epsilon \mathfrak{p}_\epsilon}}\omega^2d\mu_\epsilon
    +o(1) \leq o(1),
  \end{equation}
  which completes the proof.
\end{proof}

\begin{lemma}\label{lem:regularity-in-good-pt}
  Let
  $\mu_\epsilon \overset{w^*}{\to} \mu \in \mathcal{M}(\Omega)$. Let
  $\mathfrak{t}_\epsilon \oset{w^*}{\to} \mathfrak{t}$ in
  $\tensor{H^1(\Omega)}$ with the properties~(1)--(2)
  of Lemma~\ref{lem:strong-conv} and additionally suppose that
  \begin{itemize}
    \item $[\mathfrak{t}] \equiv 1$ on $\Omega$;
    \item $\Omega$ contains only a discrete set of unstable points denoted by $D$.
  \end{itemize}
  Then there exists
  a weakly harmonic map $\Phi\colon \Omega \to \Sph^\infty$, i.e. $\Phi \in H^1 \cap L^\infty$ and
  $\Delta \Phi = \abs{d\Phi}^2\Phi$.
  Moreover, one has the following:
  \begin{itemize}
    \item $\mu^c = \abs{d\Phi}^2 dv_g$, where $\mu^c$ denotes the continuous part of $\mu$;
    \item $\Phi$ is locally stable on $\Omega$.
  \end{itemize}
\end{lemma}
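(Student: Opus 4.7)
The plan is to define $\Phi$ from a spectral decomposition of the limit tensor $\mathfrak{t}$, apply Lemma~\ref{lem:strong-conv} on $\Omega \setminus D$ to upgrade the weak$^*$ convergences to strong ones, and then pass to the limit in the eigenfunction equations to derive simultaneously the harmonic-map equation and the identity $\mu^c = \abs{d\Phi}^2 \dv{g}$. Write $\mathfrak{t} = \sum_i \phi^i \otimes \phi^i$ with $\brc{\phi^i} \subset H^1(\Omega)$ obtained from the spectral decomposition, and set $\Phi := (\phi^i)_i$. The hypothesis $\m{\mathfrak{t}} = \sum_i (\phi^i)^2 \equiv 1$ gives $\Phi \in L^\infty(\Omega, \Sph^\infty)$, while $\abs{d\Phi}^2 = \md{\mathfrak{t}} \in L^1(\Omega)$ gives $\Phi \in H^1$.

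Every point of $\Omega \setminus D$ is stable for $\brc{\mu_\epsilon}$, and condition~(2) of Lemma~\ref{lem:strong-conv} is automatic from $\m{\mathfrak{t}_\epsilon} \leq 1 = \m{\mathfrak{t}}$. Hence, up to a subsequence, $\mathfrak{t}_\epsilon \to \mathfrak{t}$ strongly in $\tensor{H^1(\Omega')}$ and $\mu_\epsilon \overset{w^*}{\to} \mu$ in $\mathcal{B}[H^1_0(\Omega')]$ for every $\Omega' \Subset \Omega \setminus D$. Testing the eigenfunction equation against $\eta^2 \phi^i_\epsilon$ with $\eta \in C^\infty_0(\Omega \setminus D)$ and summing in $i$ yields
\begin{equation*}
  \int \eta^2 \md{\mathfrak{t}_\epsilon} \dv{g} + \tfrac{1}{2}\int \brt{d\m{\mathfrak{t}_\epsilon}, d\eta^2} \dv{g} = \int \eta^2 \m{\mathfrak{t}_\epsilon} d\mu_\epsilon.
\end{equation*}
The first term tends to $\int \eta^2 \abs{d\Phi}^2 \dv{g}$ by the $L^1$-convergence $\md{\mathfrak{t}_\epsilon} \to \abs{d\Phi}^2$; the second, after integration by parts, equals $\tfrac{1}{2}\int \m{\mathfrak{t}_\epsilon} \Delta \eta^2 \dv{g}$ and tends to $\tfrac{1}{2}\int \Delta \eta^2 \dv{g} = 0$ since $\m{\mathfrak{t}_\epsilon} \to 1$ in $L^1$. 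For the right-hand side, rewrite it as $\int \m{\eta \mathfrak{t}_\epsilon \eta} d\mu_\epsilon$; since $\eta \mathfrak{t}_\epsilon \eta \to \eta \mathfrak{t} \eta$ strongly in $\tensor{H^1_0(\Omega')}$ and $\brc{\mu_\epsilon}$ is weak$^*$ convergent and uniformly bounded in the dual $\mathcal{B}[H^1_0(\Omega')]$, the pairing converges to $\int \eta^2 d\mu$. Equating limits gives $\mu = \abs{d\Phi}^2 \dv{g}$ on $\Omega \setminus D$; since $D$ has Lebesgue measure zero, $\mu^c = \abs{d\Phi}^2 \dv{g}$ on all of $\Omega$.

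For the harmonic-map equation, pass to the limit in $\int \brt{d\phi^i_\epsilon, d\psi} \dv{g} = \int \phi^i_\epsilon \psi d\mu_\epsilon$ for fixed $\psi \in C^\infty_0(\Omega \setminus D)$, using the strong $H^1$-convergence of each component (consequence of choosing a suitable spectral basis) together with the weak$^*$ convergence of $\mu_\epsilon$ in $\mathcal{B}[H^1_0]$. This yields $\Delta \phi^i = \phi^i \abs{d\Phi}^2$ on $\Omega \setminus D$. Since $m \geq 3$, the discrete set $D$ has $H^1$-capacity zero and $C^\infty_0(\Omega \setminus D)$ is dense in $H^1_0(\Omega)$; combined with $\phi^i \in L^\infty$ and $\abs{d\Phi}^2 \in L^1$, the equation extends to all of $\Omega$. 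Local stability at stable $p \in \Omega \setminus D$ is obtained by passing $\int \phi^2 d\mu_\epsilon \leq \int \abs{d\phi}^2 \dv{g}$ (valid on $\Omega_p$ where $\ind(\mu_\epsilon, \Omega_p) = 0$) to the limit and using $\mu^c = \abs{d\Phi}^2 \dv{g}$; at $p \in D$, Lemma~\ref{lem:unstable-ring} gives $r_\epsilon \to 0$ with $\ind(\mu_\epsilon, \Omega_p \setminus \overline{B_{r_\epsilon}}) = 0$, so the inequality holds on $\Omega_p \setminus \brc{p}$ in the limit and extends across $p$ by capacity zero.

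The main obstacle is the convergence $\int \eta^2 \m{\mathfrak{t}_\epsilon} d\mu_\epsilon \to \int \eta^2 d\mu$, which must couple the two a priori separate weak$^*$ convergences of $\mathfrak{t}_\epsilon$ and $\mu_\epsilon$ through the duality between $\tensor{H^1_0}$ and $\mathcal{B}[H^1_0]$; this is where upgrading tensor convergence from weak$^*$ to the strong nuclear norm via Lemma~\ref{lem:strong-conv} becomes indispensable, and the hypothesis $\m{\mathfrak{t}} \equiv 1$ plays a crucial role in reducing the problem to a pairing of the fixed tensor $\eta \mathfrak{t} \eta$ against $\mu_\epsilon$.
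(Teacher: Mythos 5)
Your derivation of $\mu^c = |d\Phi|^2 dv_g$ is correct and in fact slightly cleaner than the paper's: by testing $\Delta\phi^i_\epsilon = \phi^i_\epsilon\mu_\epsilon$ against $\eta^2\phi^i_\epsilon$ and summing, you pass to the limit using strong convergence $\mathfrak{t}_\epsilon\to\mathfrak{t}$ in $\mathcal{N}[H^1(\Omega')]$ directly, whereas the paper establishes $\Delta\Phi = \Phi\mu^c$ first and only then derives $\mu^c = |d\Phi|^2 dv_g$ from $\Phi\cdot d\Phi = 0$. Your local stability argument also matches the paper.

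However, there is a genuine gap in your derivation of the harmonic-map equation. You claim to pass to the limit in $\int\langle d\phi^i_\epsilon, d\psi\rangle = \int\phi^i_\epsilon\psi\,d\mu_\epsilon$ ``using the strong $H^1$-convergence of each component (consequence of choosing a suitable spectral basis).'' This is unjustified: strong convergence $\mathfrak{t}_\epsilon\to\mathfrak{t}$ in the nuclear norm does \emph{not} yield componentwise strong $H^1$-convergence $\phi^i_\epsilon\to\phi^i$. The spectral decomposition of each $\mathfrak{t}_\epsilon$ carries an orthogonal ambiguity within eigenspaces, eigenvalue gaps need not persist as $\epsilon\to 0$, and — more fundamentally — the $\phi^i_\epsilon$ are fixed by the eigenfunction equation $\Delta\phi^i_\epsilon = \lambda_\epsilon\phi^i_\epsilon\mu_\epsilon$, so you are not free to reselect them to diagonalize $\mathfrak{t}_\epsilon$ along a basis matching $\mathfrak{t}$. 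Nothing in your argument pins down $\phi^i_\epsilon$ against the limiting $\phi^i$.

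The paper sidesteps exactly this issue by testing against a \emph{fixed} rank-one operator $\mathfrak{w}_i = \norm{\phi^i}_{H^1}^{-2}\,\omega\otimes\phi^i$ built from the \emph{limit} eigenvector $\phi^i$: the left-hand side $\int\mathfrak{D}[\mathfrak{w}_i\mathfrak{t}_\epsilon]\,dv_g$ is controlled by weak$^*$ convergence (since $\mathfrak{w}_i$ is compact), and the right-hand side $\int[\mathfrak{w}_i\mathfrak{t}_\epsilon\chi]\,d\mu_\epsilon$ splits into $\int[\mathfrak{w}_i(\mathfrak{t}_\epsilon-\mathfrak{t})\chi]\,d\mu_\epsilon + \int[\mathfrak{w}_i\mathfrak{t}\chi]\,d\mu_\epsilon$, where the first piece vanishes via strong nuclear convergence on $\Omega'\Subset\Omega\setminus D$ and the second passes via weak$^*$ convergence of $\mu_\epsilon$ as bilinear forms. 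This gives $\Delta\phi^i = \phi^i\mu^c$ directly, with no componentwise convergence needed. To repair your proof you should replace the last paragraph's reliance on ``strong $H^1$-convergence of each component'' with this rank-one-operator contraction, or give a separate argument establishing $\Delta\Phi = \Phi\mu^c$ that does not require identifying $\phi^i_\epsilon$ with $\phi^i$.
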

\begin{proof}
  Let us decompose $\mu$ into its continuous and atomic parts
  \begin{equation}\label{eq:meas-decomp}
    \mu=\mu^c+\sum_{p \in A(\mu)}w_p\delta_p,
  \end{equation}
  where $w_p > 0$ and $A(\mu)$ is the set of atoms of $\mu$.
  Inequality \eqref{ineq:quadr-form-bound} implies that
  $A(\mu) \subset D$.

  The operators $\brc{\mathfrak{t}_\epsilon}$ are compact, self-adjoint,
  and positive, and so is $\mathfrak{t}$, which means we can write
  \begin{equation}
    \mathfrak{t} = \sum \phi^i \otimes \phi^i
  \end{equation}
  for some mutually orthogonal $\phi^i \in H^1(\Omega)$. Since
  \begin{equation}\label{eq:l2-sphere}
    \m{\mathfrak{t}} = \sum \abs{\phi^i}^2 \equiv 1,
  \end{equation}
  let us define
  $\Phi \colon \Omega \to \Sph^\infty \subset \ell^2$ by setting $\Phi = (\cdots, \phi^i, \cdots)$.
  We are going to prove that $\Delta \phi^i = \phi^i \mu^c$.

  Let us take $\omega, \chi \in C^\infty_{0}(\Omega\setminus D)$
  such that $\chi \equiv 1$ on $\supp \omega$ and consider
  the rank $1$ operator
  \begin{equation}
    \mathfrak{w}_i = \frac{1}{\norm{\phi^i}^2_{H^1}}\omega \otimes \phi^i.
  \end{equation}
  Then we have $\mathfrak{w}_i \mathfrak{t} = \omega \otimes \phi^i$ and
  $\mathfrak{w}_i \mathfrak{t}\chi := \omega \otimes (\phi^i \chi)$.
  Since $\mathfrak{t}_\epsilon \oset{w^*}{\to} \mathfrak{t}$ and
  $\mathfrak{w}_i$ is a compact operator, we have
  \begin{equation}
    \int \brt{d\omega,d\phi^i} dv_g
    =\int \md{\mathfrak{w}_i \mathfrak{t}} dv_g
    = \lim_{\epsilon\to 0} \int \md{\mathfrak{w}_i \mathfrak{t}_\epsilon} dv_g
  \end{equation}
  At the same time, by Lemma~\ref{lem:strong-conv} applied to
  $\Omega' \Subset \Omega \setminus D$, the strong
  $H^1$-convergence
  $\mathfrak{w}_i \mathfrak{t}_\epsilon \chi \to \mathfrak{w}_i \mathfrak{t} \chi
  \in \tensor{H^1_0(\Omega')}$ and the weak convergence
  $\mu_{\epsilon} \oset{w^*}{\to} \mu$
  as bilinear forms on $H^1_{0}(\Omega')$ yield
  \begin{multline}
    \int \md{\mathfrak{w}_i \mathfrak{t}_\epsilon} dv_g
    = \int \m{\mathfrak{w}_i \mathfrak{t}_\epsilon} d\mu_\epsilon
    = \int \m{\mathfrak{w}_i \mathfrak{t}_\epsilon \chi} d\mu_\epsilon
   \\ = \int \m{\mathfrak{w}_i (\mathfrak{t}_\epsilon - \mathfrak{t}) \chi} d\mu_\epsilon
      + \int \m{\mathfrak{w}_i \mathfrak{t} \chi} d\mu_\epsilon
   \to \int \m{\mathfrak{w}_i \mathfrak{t} \chi} d\mu
    = \int \omega \phi^i d\mu,
  \end{multline}
  which means $\Delta \Phi = \Phi \mu^c$ on $\Omega\setminus D$. Since
  discrete sets have capacity zero, we obtain
  \begin{equation}\label{eigen.map}
    \Delta \Phi = \Phi \mu^c \text{ on } \Omega.
  \end{equation}

  Equation~\eqref{eq:l2-sphere} implies $\Phi \cdot d\Phi = 0$, and
  after testing equation~\eqref{eigen.map} against $\omega \Phi \in H^1$,
  we conclude that
  \begin{equation}
    \mu^c=|d\Phi|^2 \dv{g}
    \quad\text{and}\quad
    \Delta \Phi = |d\Phi|^2 \Phi
  \end{equation}
  in the weak sense. Thus, $\Phi\colon \Omega \to \mathbb{S}(\ell^2)$
  is a weakly harmonic map.

  Inequality~\eqref{ineq:stable-map} obviously holds in a neighborhood of a
  stable point. On a neighborhood $U$ of an unstable point given
  by Lemma~\ref{lem:unstable-ring}, after taking the limit, one has
  \begin{equation}
    \int \omega^2 \abs{d\Phi}^2 dv_g \leq \int \abs{d\omega}^2 dv_g
    \quad \forall \omega \in C^\infty\brr{U\setminus\brc{p}}
  \end{equation}
  Again, the inequality continues to hold over a discrete set.
\end{proof}

\subsection{No bubbling phenomenon}\label{app:blow-up}

Let $\brr{M,g}$ be a compact Riemannian manifold possibly with
a nonempty boundary $\partial M$ and $\dim M = m \geq 2$.
Denote by $\mathcal{M}_c(M)$ the space of all nonnegative continuous Radon measures
on $M$. Consider a sequence $\brc{\mu_\varepsilon} \subset \mathcal{M}_c(M)$
such that $\mu_\varepsilon \oset{w^*}{\to}\mu$. Notice that $\mu$ decomposes as its continuous and atomic parts,
\begin{equation}
  \mu = \mu^c + \sum_{p \in A(\mu)} w_p \delta_{p},
\end{equation}
where $A(\mu) = \brc{p \in M \colon \mu(\brc{p}) > 0}$ and $w_p  = \mu(\brc{p})$.

\begin{definition}\label{def:bubble-tree}
  Fix $\delta > 0$. We say that a converging sequence
  $\brc{\mu_\varepsilon} \subset \mathcal{M}_c(M)$,
  $\mu_\varepsilon \oset{w^*}{\to}\mu$,
  has a $\delta$-\emph{bubbling property} if up to a subsequence,
  there exists
  a finite collection of maps
  $T^i_\epsilon \colon U_i \to \R^m$, where
  $U_i \subset M$ is a coordinate neighborhood of $p_i \in A(\mu)$
  with the following properties.
  \begin{enumerate}
    \item The maps $T^i_\epsilon$ have the form
      \begin{equation}
        T^i_\epsilon(x) = \frac{x - p^i_\epsilon}{\alpha^i_\epsilon},
        \quad\text{ where } p^i_\epsilon \to p_i, \ \alpha^i_\epsilon \searrow 0.
      \end{equation}
    \item Either for any $R > 0$ and small enough $\epsilon$, the balls
      $B_{\alpha^i_\epsilon R}(p^i_\epsilon)$ and
      $B_{\alpha^j_\epsilon R}(p^j_\epsilon)$ do not intersect,
      or one of them is contained in the other, say
      $B_{\alpha^i_\epsilon R}(p^i_\epsilon) \subset B_{\alpha^j_\epsilon R}(p^j_\epsilon)$, and
      $(T^j_\epsilon)[B_{\alpha^i_\epsilon R}(p^i_\epsilon)]$ converge to a point in $\R^m$,
      in other words,
      \begin{equation}\label{eq:dif-rate-condition}
        \frac{\operatorname{dist}(p^i_\epsilon,p^j_\epsilon)}{\alpha^i_\epsilon + \alpha^j_\epsilon}
        +\frac{\alpha^i_\epsilon}{\alpha^j_\epsilon}
        +\frac{\alpha^j_\epsilon}{\alpha^i_\epsilon} \to \infty.
      \end{equation}
    \item $(T^i_\epsilon)_* [\mu_\epsilon] \overset{w^*}{\to} \mu_i \in \mathcal{M}(\R^m)$ and
      (one can suppose that $\mu_i^c \neq 0$)
      \begin{equation}\label{ineq:alomst-no-loss}
        (1-\delta)\mu(M) \leq \mu^c(M) + \sum_i \mu_i^c(\R^m) \leq  \mu(M).
      \end{equation}
  \end{enumerate}
  Here the convergence $(T_\epsilon)_* [\mu_\epsilon] \overset{w^*}{\to}
  \mu$ means
  \begin{equation}
    \int_{\R^m} \phi(x') d\mu(x') = \int_M \phi\brr{\frac{x - p^i_\epsilon}{\alpha^i_\epsilon}}
    d\mu_\epsilon(x),\quad \forall \phi \in C_0(\R^m).
  \end{equation}
\end{definition}
The following proposition was proved in~\cite[Appendix~A.1]{vinokurov:2025:sym-eigen-val}.
\begin{proposition}\label{prop:bubble-tree-existence}
  Let $\brc{\mu_\varepsilon} \subset \mathcal{M}_c(M)$,
  $\mu_\varepsilon \overset{w^*}{\to} \mu$,
  and $\lambda_k(\mu_\epsilon) \geq c > 0$. Then for any $\delta > 0$,
  $\brc{\mu_\varepsilon}$ has $\delta$-bubbling property
  with at most $k$ maps $\brc{T^i_\epsilon}$. If
  $\supp \mu_\epsilon \subset \partial M$, one has
  $T^i_\epsilon \colon U_i \to \R^m_+$.
\end{proposition}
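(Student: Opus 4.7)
The plan is to construct the bubble tree by an iterative scale-extraction procedure at the atoms of $\mu$, and to bound the total number of extracted blow-ups by $k$ via a Dirichlet-energy-shrinking test-function argument.

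First, I would record that by the upper semi-continuity of $\lambda_k$ (Proposition~\ref{prop:upper-cont-and-bound}) one has $\lambda_k(\mu) \geq c > 0$. A key preliminary observation is that each atom $p$ of $\mu$ is an unstable point for $\brc{\mu_\epsilon}$ whenever $m \geq 3$: testing $Q_{\mu_\epsilon}$ against a standard cutoff $\phi$ equal to $1$ on $B_{r/2}(p)$ with support in $B_r(p)$ gives $\int \abs{d\phi}^2 \lesssim r^{m-2}$ while $\int \phi^2 d\mu_\epsilon \to \mu(B_{r/2}(p)) \geq \mu(\brc{p}) > 0$, so $Q_{\mu_\epsilon}(\phi) < 0$ for small $r$ and small $\epsilon$. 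Hence $A(\mu)$ is contained in the set of unstable points.

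For each $p_i \in A(\mu)$ I would fix an isolating coordinate chart $U_i \ni p_i$ and carry out a one-parameter scale extraction: pick scales $\alpha^i_\epsilon \to 0$ and centers $p^i_\epsilon \to p_i$ so that, along a subsequence, $(T^i_\epsilon)_* \mu_\epsilon$ converges weakly$^*$ to a Radon measure $\mu_i$ on $\R^m$ capturing a definite fraction of $w_i := \mu(\brc{p_i})$ as its continuous part. Concretely, one chooses $\alpha^i_\epsilon$ as the smallest radius for which $\mu_\epsilon(B_{\alpha^i_\epsilon}(p^i_\epsilon)) = \eta w_i$ for a parameter $\eta$ close to $1$, with $p^i_\epsilon$ a center-of-mass type choice inside that ball. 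If a $\mu_i$ still carries a large atom, I would iterate the extraction at that atom, producing nested blow-ups; the nesting/separation condition~\eqref{eq:dif-rate-condition} is enforced at each recursion step by requiring child scales to satisfy $\alpha^i_\epsilon/\alpha^j_\epsilon \to 0$, and termination of the recursion follows because the atomic defect drops by a definite fraction at every step, giving~\eqref{ineq:alomst-no-loss} after finitely many iterations. The boundary case is identical: atoms lying on $\partial M$ admit centers $p^i_\epsilon \in \partial M$ and half-space charts $T^i_\epsilon \colon U_i \to \R^m_+$.

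The bound of $k$ on the total number of bubbles comes from a dimensional argument: each blow-up $T^i_\epsilon$ carries compactly supported test functions of the form $\phi(T^i_\epsilon(\cdot))$ with $\phi \in C_0^\infty(\R^m)$, and such a function has Dirichlet energy of order $(\alpha^i_\epsilon)^{m-2} \to 0$ (since $m\geq 3$) while its $L^2(\mu_\epsilon)$-norm converges to $\norm{\phi}_{L^2(\mu_i)} > 0$. Thus each bubble provides at least one direction on which $Q_{\mu_\epsilon}$ is eventually negative. Thanks to~\eqref{eq:dif-rate-condition}, these directions can be arranged with pairwise disjoint supports (in the nested case one places an outer bubble's cutoff in a shrinking annulus avoiding the inner bubble's support), yielding as many independent negative directions as bubbles. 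If there were $k+1$ of them, applying Remark~\ref{rem:size_of-bad} after rescaling by $\lambda_k(\mu_\epsilon)$ would force $\lambda_k(\mu_\epsilon) \to 0$, contradicting the hypothesis.

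The main obstacle I expect is the combinatorial bookkeeping: verifying at every recursion level that the new scales can be chosen consistently with the disjointness/nesting condition~\eqref{eq:dif-rate-condition} relative to all previously extracted bubbles, while simultaneously achieving the quantitative mass capture~\eqref{ineq:alomst-no-loss} in a uniformly finite number of steps. A secondary technical point is the construction of the mutually disjoint cutoffs witnessing the $k$-bound; in the nested situation one has to choose annular supports for the outer bubbles carefully so that $\int \phi^2 d\mu_i$ remains bounded away from zero even after the inner concentration region is excised, which is where the continuity of the captured part $\mu_i^c$ plays a decisive role.
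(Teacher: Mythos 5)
The paper does not give its own argument for this proposition; it is delegated to \cite[Appendix~A.1]{vinokurov:2025:sym-eigen-val} (the author's two-dimensional symmetry-invariant paper), so there is no internal proof to compare against. Taken on its own terms, your sketch is a plausible outline of a standard bubble-tree construction, but it has a concrete gap and several vague spots.

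The gap: you bound the number of bubbles by $k$ using the Dirichlet energy $\int \abs{d(\phi\circ T_\epsilon^i)}^2 \sim (\alpha_\epsilon^i)^{m-2}$, which you flag as requiring $m\geq 3$. But the ambient hypothesis of the subsection (and of the cited reference) is only $m\geq 2$, and in dimension two the Dirichlet integral is conformally invariant, so $\phi\circ T_\epsilon^i$ has the \emph{same} Dirichlet energy as $\phi$; your test functions do not have small Rayleigh quotients. The correct mechanism in $m=2$ is the logarithmic cutoff: $\phi=1$ on $B_r$, zero outside $B_R$, interpolating logarithmically, with energy $\sim 1/\log(R/r)$. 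This also affects your preliminary claim that atoms are unstable points (the cutoff with $\int\abs{d\phi}^2 \lesssim r^{m-2}$ only works for $m\geq 3$). Since this paper only invokes the proposition via Proposition~\ref{prop:no-atoms} where $m\geq 3$, the issue would not break the application here, but the statement as written is in dimension $m\geq 2$ and your bound argument does not cover it.

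Two secondary remarks. First, your reduction to ``apply Remark~\ref{rem:size_of-bad} after rescaling'' is imprecise: Remark~\ref{rem:size_of-bad} bounds the number of \emph{unstable points}, whereas what you actually need is that $k+1$ disjointly supported functions with Rayleigh quotient tending to $0$ contradict $\lambda_k(\mu_\epsilon)\geq c$ directly via \eqref{meas-eigenval}; there is no rescaling step and no appeal to the remark. Second, the heart of the proposition is the iterative extraction and its termination together with the mass-capture inequality~\eqref{ineq:alomst-no-loss}, and your sketch of it (choosing $\alpha_\epsilon^i$ as the ``smallest radius'' hitting a prescribed fraction $\eta w_i$, and $p_\epsilon^i$ by ``center of mass'') does not verify well-definedness of these choices, that they produce a nontrivial weak$^*$ limit $\mu_i$, that \eqref{eq:dif-rate-condition} can be maintained relative to all previously extracted bubbles simultaneously, or that the recursion halts uniformly. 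You correctly identify this as the main obstacle, but it remains unaddressed; a concentration-function argument in the spirit of concentration-compactness, plus an accounting that each bubble that contributes to~\eqref{ineq:alomst-no-loss} removes at least a fixed fraction of the residual atomic mass, is what is actually needed to close the argument.
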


\begin{proposition}\label{prop:no-atoms}
  If $\dim M \geq 3$, $\brc{\mu_\varepsilon} \subset \mathcal{M}_c(M)$,
  $\mu_\varepsilon \overset{w^*}{\to} \mu$, and $\lambda_k(\mu_\varepsilon) \ge c > 0$,
  then $\mu$ has no atoms, i.e. $\mu \in \mathcal{M}_c(M)$.
\end{proposition}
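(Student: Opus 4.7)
The plan is to argue by contradiction, leveraging Proposition~\ref{prop:bubble-tree-existence} together with the fact that for $m \geq 3$ the Dirichlet energy scales as $\alpha^{m-2} \to 0$ under a blow-up of rate $\alpha \to 0$. Suppose $\mu$ had an atom at some point $p$, with mass $w_p := \mu(\{p\}) > 0$. I would apply Proposition~\ref{prop:bubble-tree-existence} with $\delta := w_p/(2\mu(M))$, extracting finitely many blow-up maps $T^i_\epsilon$ and limiting bubbles $\mu_i \in \mathcal{M}(\mathbb{R}^m)$. Since atoms are ignored on the left-hand side of \eqref{ineq:alomst-no-loss}, one has $\mu^c(M) \leq \mu(M) - w_p$, and hence
\begin{equation}
  \sum_i \mu_i^c(\mathbb{R}^m) \geq (1-\delta)\mu(M) - \mu^c(M) \geq w_p - \delta\mu(M) = w_p/2 > 0.
\end{equation}
In particular, at least one bubble, say $\mu_1$, satisfies $\mu_1^c(\mathbb{R}^m) > 0$.

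Next, I would use this nonzero continuous part of $\mu_1$ to construct a $(k+1)$-dimensional subspace of test functions whose Rayleigh quotient under $\mu_\epsilon$ is forced to zero. Since $\mu_1^c$ is continuous and nonzero, fix some $R$ with $\mu_1^c(B_R) > 0$ and choose $k+1$ pairwise disjoint open balls $V_0, \ldots, V_k \subset B_R$, each of positive $\mu_1^c$-measure, along with bump functions $\phi_j \in C^\infty_0(V_j)$ with $\int \phi_j^2 \, d\mu_1 > 0$. Pulling back via $T^1_\epsilon(x) = (x - p^1_\epsilon)/\alpha^1_\epsilon$, set $\tilde{\phi}_j^\epsilon := \phi_j \circ T^1_\epsilon \in C^\infty(M)$, which is supported in a ball of radius $\sim R\alpha^1_\epsilon$ around $p^1_\epsilon \to p$.

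The key scaling computation, done in a normal coordinate chart around $p$ where $g$ is Euclidean up to $o(1)$ errors, yields
\begin{equation}
  \int_M \abs{d\tilde{\phi}_j^\epsilon}_g^2 \, dv_g = (\alpha^1_\epsilon)^{m-2} \int_{\R^m} \abs{d\phi_j}^2 \, dv_{euc}\, (1 + o(1)),
\end{equation}
while $\int_M \tilde{\phi}_i^\epsilon \tilde{\phi}_j^\epsilon \, d\mu_\epsilon = \int_{\R^m}\phi_i\phi_j \, d(T^1_\epsilon)_*\mu_\epsilon \to \int \phi_i\phi_j \, d\mu_1$, which is diagonal and positive definite because of the disjoint supports. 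Consequently, the Rayleigh quotient on $F_{k+1} := \mathrm{span}\{\tilde{\phi}_0^\epsilon, \ldots, \tilde{\phi}_k^\epsilon\}$ is uniformly bounded above by $C\cdot(\alpha^1_\epsilon)^{m-2}$. The variational characterization~\eqref{meas-eigenval} then gives $\lambda_k(\mu_\epsilon) \leq C(\alpha^1_\epsilon)^{m-2} \to 0$ since $m \geq 3$, contradicting $\lambda_k(\mu_\epsilon) \geq c > 0$.

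The main obstacle I anticipate is ensuring that at least one bubble genuinely has nontrivial continuous mass—it would not suffice if all bubbles themselves concentrated further on atoms. This is handled precisely by the quantitative version of mass preservation in \eqref{ineq:alomst-no-loss}, and the choice $\delta < w_p/\mu(M)$ forces continuous bubble mass to appear. A minor technical point is the passage from the curved metric $g$ to Euclidean in the shrinking charts, but this is routine since $\alpha^1_\epsilon \to 0$. The dimension hypothesis $m \geq 3$ enters exactly at the factor $\alpha^{m-2}$, which is the reason the analogous statement fails (and true bubbling occurs) on surfaces.
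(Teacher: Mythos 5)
Your proposal is correct and follows essentially the same route as the paper: blow up at an atom via Proposition~\ref{prop:bubble-tree-existence}, extract a bubble with nontrivial continuous mass, choose $k+1$ disjointly supported test functions there, pull them back, and observe that the Rayleigh quotient scales as $\alpha^{m-2} \to 0$, contradicting $\lambda_k(\mu_\epsilon) \geq c > 0$. The only difference is that you make explicit, via the choice $\delta < w_p/\mu(M)$ and inequality~\eqref{ineq:alomst-no-loss}, why at least one bubble must have positive continuous mass — a point the paper's proof asserts more tersely (stating that $\tilde{\mu} \neq 0$ after removing a finite set $D$ of atoms, which by Remark~\ref{rem:size_of-bad} is indeed finite).
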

\begin{proof}
  Since $\brc{\mu_\varepsilon}$ has $\delta$-bubbling property,
  any atom gives rise to a collection of maps $T_\epsilon \colon B(1,p_i) \to \R^m$ such that
  $\tilde{\mu}_\epsilon := (T_\epsilon)_* [\mu_\epsilon] \oset{w^*}{\to} \tilde{\mu} \in
  \mathcal{M}_c(\R^m\setminus D)$, where $\tilde{\mu}\neq 0$ and $D$ is a finite set. Then, we choose any
  $k+1$ functions with pairwise disjoint supports
  $\psi_i \in C^\infty_0(\R^m\setminus D)$ such that
  \begin{equation}
    \int_{\mathbb{R}^m} \psi_i^2 d\tilde{\mu} \ge c_1 \text{ and } \int_{\mathbb{R}^m} \abs{d\psi_i}^2 dx \le c_2.
  \end{equation}
  Thus,
  \begin{equation}
    \lambda_k(\mu_\varepsilon) \le \max_{0\le i \le k}
    \frac{\int_{M} \abs{d\psi_i\brr{\frac{x - p_\varepsilon}{\alpha_\varepsilon}}}^2_g dv_g}
    {\int_{M} \psi_i\brr{\frac{x - p_\varepsilon}{\alpha_\varepsilon}}^2 d\mu_\varepsilon}
    \le  \alpha_\varepsilon^{m-2}\max_{0\le i \le k}
    \frac{c_3\int_{\mathbb{R}^m} \abs{d\psi_i}^2 dx}
    {\int_{\mathbb{R}^m} \psi_i^2 d\tilde{\mu}_\epsilon} \to 0.
  \end{equation}
\end{proof}

\subsection{Proof of Theorem~\ref{thm:main}}
  Starting with the maximizing sequence from Proposition~\ref{prop:max-sequence},
  replace $\mu_\epsilon$ by $\lambda_k(\mu_\epsilon) \mu_\epsilon$. We may then assume that
  $\lambda_k(\mu_\epsilon) = 1$, while $\mu_\epsilon(M) \to \mathcal{V}_k(g)$. In particular, $\ind \mu_\epsilon \leq k$, and by Remark~\ref{rem:size_of-bad}, the
  set $D$ of unstable points has cardinality at most $k$. Then we apply
  Lemma~\ref{lem:strong-conv} and Lemma~\ref{lem:regularity-in-good-pt}
  on $\Omega = M\setminus D$ and $\Omega = M$, respectively
  (condition~\ref{it:lower-than} of Lemma~\ref{lem:strong-conv} is trivially satisfied
  since $\m{\mathfrak{t}_\epsilon} \leq 1 = \m{\mathfrak{t}}$ by~\eqref{eq:limit-map-to-sphere}).
  Thus, we have a locally stable harmonic map $u\in H^1(M, \Sph^\infty)$.
  Moreover, the limiting measure $\mu$ has the form
  $$
  \mu=|du|^2dv_g+\sum_{p \in A(\mu)}w_p\delta_p
  $$
  and $\mathcal{V}_k(g) = \mu(M)$. Since $\lambda_k(\mu_\epsilon) = 1$,
  we conclude from Proposition~\ref{prop:no-atoms} that $\mu=|du|^2dv_g$.

  Recall that $\tilde{\mu} \mapsto \lambda_k(\tilde{\mu})$ is upper semi-continuous
  with respect to the weak$^*$ convergence, so $\lambda_k(\mu) \geq 1$, which is equivalent to $\ind \mu \leq k$, and
  \begin{equation}
    \mathcal{V}_k(g) = \mu(M) \leq \overline{\lambda}_k(\mu) \leq \mathcal{V}_k(g),
  \end{equation}
  where the last inequality holds since the suprema over $\tilde{\mu} \in L^\infty$ and
  $\tilde{\mu} \in L^1$ are the same. Hence equality holds throughout. In particular, $\lambda_k(\mu) = 1$.

  The regularity of $u$ follows from Theorem~\ref{thm:reg-harm-map}, and Corollary~\ref{cor:finite-sphere-map}
  implies that the image of $u$ lies in a finite-dimensional subsphere.

\subsection{Steklov eigenvalue optimization}
If the manifold $M$ is compact, but has a nonempty boundary $\partial M$, we define
\begin{equation}\label{Steklov-opt}
  \mathcal{V}^\partial_k(g) = \sup \set*{\overline{\lambda}_k(\mu)}{
    \mu \in C^\infty(\partial M),\ \mu > 0
  }
\end{equation}
When $\supp \mu \subset \partial M$, the variational eigenvalues come from
the corresponding weighted Steklov eigenvalue problem:
 \begin{equation}
  \begin{cases}
    \Delta_g \phi = 0 & \quad\text{on}\quad \Int M
    \\ \partial_\nu \phi = \lambda_k \phi \mu  &\quad\text{on}\quad \partial M.
  \end{cases}
 \end{equation}
The eigenfunctions of critical measures generate free boundary harmonic maps
to Euclidean unit balls, $u \colon M
\to \mathbb{B}^{n}$, i.e. $\Delta_g u = 0$,
$u(\partial M) \subset \Sph^{n-1}$, and $\partial_\nu u \parallel u$.
As a consequence (see, e.g., \cite{Karpukhin-Metras:2022:higher-dim}), this implies $\mu = \abs{\partial_\nu u} ds_g$, where $ds_g$ is the boundary
volume measure.

In this setting, one can prove the following existence result.
\begin{theorem}\label{thm:main-steklov}
  Let $(M,g)$ be a compact connected Riemannian manifold of dimension $m \geq 3$ with
  nonempty boundary $\partial M$.
  Then there exists a locally stable free boundary harmonic map
  $u \in H^1(M,\mathbb{B}^\infty)$ such that
  \begin{equation}
    \lambda_k(\abs{\partial_\nu u} ds_g) = 1
    \quad\text{and}\quad
    \mathcal{V}_k^\partial(g) = \int_{M} \abs{du}^2_g dv_g =
    \int_{\partial M} \abs{\partial_\nu u} ds_g.
  \end{equation}
\end{theorem}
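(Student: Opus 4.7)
The plan is to mirror the proof of Theorem~\ref{thm:main}, with the compact embedding $H^1(M) \hookrightarrow L^2(M)$ replaced throughout by the compact trace map $H^1(M) \to L^2(\partial M)$. First I would establish a Steklov analogue of Proposition~\ref{prop:max-sequence}. Set $\mathfrak{q}(\mu)[\phi] = \int_{\partial M}(\phi - [\phi]_\mu)^2 d\mu$ for $\mu \in L^\infty(\partial M) \cap \mathcal{P}(\partial M)$, viewed as a distribution on $M$ via the trace pairing; then the analogs of Proposition~\ref{prop:con-bilin-form} and Lemma~\ref{lem:sobolev-dual-meas} hold using the continuity of $L^\infty(\partial M)\, ds_g \hookrightarrow (H^{1,m/(m-1)}(M))^*$ and the compactness of $T(\mu) = i^* M_\mu i$ for $i\colon X \to L^2(\partial M)$ the trace (with $X = \set{\phi \in H^1(M)}{\int_{\partial M}\phi\,ds_g = 0}$). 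Applying Proposition~\ref{prop:subdiff-calc}, Corollary~\ref{cor:subdiff-calc-meas}, and Proposition~\ref{prop:multip-rule} inside $L^\infty(\partial M)$ exactly as in Proposition~\ref{prop:max-sequence} then produces $\mu_\epsilon$ and tensors $\mathfrak{t}_\epsilon = \sum \phi_\epsilon^i \otimes \phi_\epsilon^i$, with $\phi_\epsilon^i$ being Steklov eigenfunctions ($\Delta \phi_\epsilon^i = 0$ in $\Int M$, $\partial_\nu \phi_\epsilon^i = \lambda_\epsilon \phi_\epsilon^i \mu_\epsilon$ on $\partial M$), uniformly bounded in $\tensor{H^1(M)}$, and with $\m{\mathfrak{t}_\epsilon}|_{\partial M}$ converging to $1$ pointwise $ds_g$-a.e.

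Next, I would adapt the convergence and regularity lemmas. The analog of Lemma~\ref{lem:strong-conv} goes through once one observes that testing the Steklov equation against $\mathfrak{q}_\epsilon(\phi_\epsilon^i)\omega^2$ produces energy identities in which the boundary pairing against $\mu_\epsilon$ now plays the role previously played by the interior pairing, and all the cancellations leading to $\limsup \int\brr{\md{\mathfrak{t}_\epsilon} - \md{\mathfrak{p}_\epsilon \mathfrak{t}_\epsilon \mathfrak{p}_\epsilon}}\omega^2 dv_g \leq 0$ still take place. Adapting Lemma~\ref{lem:regularity-in-good-pt}, I write $\mathfrak{t} = \sum \phi^i \otimes \phi^i$, set $\Phi = (\phi^i) \in H^1(M,\ell^2)$, and obtain $\Delta \Phi = 0$ in $\Int M$ together with $\sum |\phi^i|_{\partial M}|^2 \equiv 1$ on $\partial M$; testing $\partial_\nu \Phi = \Phi \mu^c$ against $\omega \Phi$ and splitting into tangential and normal components forces $\partial_\nu \Phi \parallel \Phi$ and $\mu^c = \abs{\partial_\nu \Phi}\, ds_g$, so $\Phi$ is a locally stable free boundary harmonic map into $\mathbb{B}^\infty$.

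Finally, the no-bubbling argument of Proposition~\ref{prop:no-atoms} already handles boundary-supported measures: Proposition~\ref{prop:bubble-tree-existence} is stated with half-space rescalings $T_\epsilon^i \colon U_i \to \R^m_+$ when $\supp \mu_\epsilon \subset \partial M$, and the same Rayleigh-quotient estimate of order $\alpha_\epsilon^{m-2}$ rules out atoms when $m \geq 3$. One concludes $\mu = \abs{\partial_\nu u}\, ds_g$, and the chain of equalities $\mathcal{V}_k^\partial(g) = \mu(\partial M) = \int_{\partial M}\abs{\partial_\nu u}\, ds_g = \int_M \abs{du}^2_g\, dv_g$ follows, the last equality from Green's formula applied to $u$ on $M$ using $|u|_{\partial M}|^2 \equiv 1$. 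The main obstacle is setting up the subdifferential calculus in the boundary-trace framework: once the duality pairing between boundary measures and $H^1(M)$ is correctly fixed and the compactness of $T(\mu)$ verified, the rest of the machinery from Section~\ref{sec:exis-and-reg} transfers essentially verbatim. As the paper explicitly notes, regularity of $u$ near $\partial M$ is left open and is not addressed in this argument.
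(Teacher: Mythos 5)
Your proposal matches the paper's own (sketched) argument: adapt Proposition~\ref{prop:max-sequence}, Lemma~\ref{lem:strong-conv}, and Lemma~\ref{lem:regularity-in-good-pt} to boundary measures, invoke Proposition~\ref{prop:no-atoms} (already stated for the boundary case) to exclude atoms, and leave boundary regularity open. The one technical point the paper explicitly flags that you do not name is the replacement of the local spaces $H^1_0(\Omega)$ by $H^1_{co}(\Omega) = \set{\phi\in H^1(M)}{\supp\phi\subset\Omega}$ when $\Omega\cap\partial M\neq\varnothing$, so that test functions are allowed to be nonzero on the boundary where the Steklov condition lives; this is implicit in your phrasing of the adapted convergence lemma, but is the detail one must track to make the cancellations go through.
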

\begin{proof}[Sketch of proof]
  The proof closely follows the argument used in Theorem~\ref{thm:main},
  with suitable adaptations for the Steklov case.
   In particular, analogues of Proposition~\ref{prop:max-sequence},
  Lemma~\ref{lem:strong-conv}, and Lemma~\ref{lem:regularity-in-good-pt}
  can be established with straightforward modifications. For a detailed
  version of the argument in the case of dimension $m = 2$,
  see~\cite{vinokurov:2025:sym-eigen-val}.

  For instance, one works with measures in $L^1(\partial M)$ or $L^\infty(\partial M)$
  instead, and in
  Lemmas~\ref{lem:strong-conv} and~\ref{lem:regularity-in-good-pt},
  the local Sobolev spaces $H^1_0(\Omega)$ must be replaced by
  \begin{align}
    H^1_{co}(\Omega) &:=\overline{\set{\phi \in C^\infty(M)}{\supp \phi \subset \Omega}}^{H^1}
    \\ &= \set*{\phi \in H^1(M)}{\supp \phi \subset \Omega},
  \end{align}
  i.e., functions whose support in $\Omega$ may intersect the boundary $\partial M$
  when $\Omega \cap \partial M \neq \varnothing$.

  Finally, Proposition~\ref{prop:no-atoms} remains valid in this setting, implying that the limiting boundary measure contains no atoms.
\end{proof}

\section{Harmonic maps to \texorpdfstring{$\Sph^{\infty} = \Sph(\ell^2)$}{Lg}}
\label{sec:harm-maps}

Throughout this section, we
suppose that geodesic balls are smooth, e.g. $r < \operatorname{inj} \Omega$ as long as
$B_r(p) \subset \Omega$, and constants $C = C(g)$ may depend on suitable $\norm{g}_{C^k}$-norms of $g$ and $g^{-1}$.

\subsection{Partial regularity}
The goal of this section is to extend the classical regularity
results for harmonic maps to the infinite-dimensional setting.  Consider a map $u\colon \Omega \to \Sph^\infty$ and define
\begin{equation}
  E[u] = E[u; \Omega] = \int_{\Omega} \abs{du}^2.
\end{equation}
\begin{definition}
  A harmonic map $u \in H^1(\Omega, \Sph^\infty)$ is called \emph{stationary}
  if one has
  \begin{equation}
    \frac{d}{dt}\bigg|_{t=0} E[u \circ \phi_t] = 0
  \end{equation}
  for all differentiable 1-parameter families of diffeomorphisms
  $\phi_t\colon \Omega \to \Omega$ satisfying $\phi_0 = \id$ and
  $\phi_t|_{\Omega \setminus K} = \id|_{\Omega \setminus K}$ for some
  compact subset $K \subset \Omega$.
\end{definition}
The classical energy monotonicity formula is independent of the dimension
of the target space and the same proof holds in the $\ell^2$-setting.
\begin{proposition}[Monotonicity formula, \cite*{Price:1983:monoton-formula}]
  For a stationary harmonic map $u \in H^1(\Omega, \Sph^\infty)$, there exists
  a constant $C = C(m, \kappa)$, where $m = \dim M$ and
  $\kappa \geq \operatorname{sec}_g|_\Omega$ is an upper bound on the sectional curvature, such that
  for any $B_r(p) \subset \Omega$, one has
  \begin{equation}\label{eq:monoton-formula}
    \frac{d}{dr}  \brr{e^{Cr}r^{2-m}E[u; B_r(p)]} \geq 0.
  \end{equation}
\end{proposition}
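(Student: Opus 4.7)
The plan is to follow Price's classical argument for finite-dimensional targets, using that the stress-energy identity of a stationary map involves the target only through the $\ell^2$-inner product in $\abs{du}^2_g = \sum_i \abs{du^i}^2_g$; consequently the infinite-dimensional target $\Sph^\infty$ introduces no genuinely new analytic obstacle.

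The first step is to derive the stress-energy identity from stationarity. For $X \in \Gamma_c(T\Omega)$ with flow $\phi_t$, pulling $E[u\circ\phi_t]$ back to $\Omega$ via the varying metric $(\phi_t)^*g$ and differentiating at $t=0$ yields
\begin{equation}
\int_\Omega S_{ij}\nabla^i X^j \, dv_g = 0, \qquad S_{ij} := \brt{\partial_i u,\partial_j u}_{\ell^2} - \tfrac{1}{2}\abs{du}^2_g \, g_{ij}.
\end{equation}
For merely $H^1$-maps into $\ell^2$ this identity is justified by truncating $u$ to its first $N$ coordinates $u^N = (u^1,\dots,u^N,0,\dots)$, applying the classical computation to each $u^N$, and passing to the limit using $\abs{du^N}^2 \nearrow \abs{du}^2$ in $L^1(\Omega)$.

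The second step is the choice of test field. I would set $X = \eta(\rho)\nabla(\rho^2/2)$, where $\rho := d_g(\cdot,p)$ is smooth on $B_r(p)$ by the assumption $r<\operatorname{inj}\Omega$ made at the start of the section, and $\eta\colon[0,\infty)\to[0,1]$ is a smooth nonincreasing cutoff approximating $\chi_{[0,r]}$. One computes
\begin{equation}
\nabla_i X_j = \eta'(\rho)\rho\,\nabla_i\rho\,\nabla_j\rho + \eta(\rho)\,\operatorname{Hess}(\rho^2/2)_{ij}.
\end{equation}
The Hessian comparison theorem under $\operatorname{sec}_g\leq\kappa$ gives $\operatorname{Hess}(\rho^2/2) = g + A$, where $A$ vanishes in the $\nabla\rho$-direction and $\abs{A}_g \leq C(m,\kappa)\rho^2$. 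Using $g^{ij}S_{ij} = -\tfrac{m-2}{2}\abs{du}^2$ and $S(\nabla\rho,\nabla\rho) = \abs{\partial_\rho u}^2 - \tfrac{1}{2}\abs{du}^2$, and then letting $\eta \nearrow \chi_{[0,r]}$ so that $\eta'$ concentrates on $\partial B_r$, I obtain, with $E(r) := E[u;B_r(p)]$,
\begin{equation}
\tfrac{r}{2}E'(r) - \tfrac{m-2}{2}E(r) = r\int_{\partial B_r}\abs{\partial_\rho u}^2 \, ds_g + \mathrm{err}(r),
\end{equation}
where $\abs{\mathrm{err}(r)} \leq C\kappa\int_{B_r}\rho^2\abs{du}^2\,dv_g \leq C\kappa r^2 E(r)$. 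Dropping the nonnegative boundary term and dividing by $r^{m-1}$ yields
\begin{equation}
\frac{d}{dr}\brs{r^{2-m}E(r)} \geq -C\kappa\, r\cdot r^{2-m}E(r).
\end{equation}

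Since $B_r(p)\subset\Omega$ forces $r$ to stay bounded, the factor $C\kappa r$ is dominated by a constant $C'=C'(m,\kappa,\Omega)$, and the last inequality rearranges to $\frac{d}{dr}\brr{e^{C'r}r^{2-m}E(r)} \geq 0$, which is the claim. The only step not purely finite-dimensional is the derivation of the stress-energy identity for the Hilbert-valued map $u$, which I expect to be the main (though mild) technical point, to be resolved by the coordinatewise truncation described above; everything else is standard Riemannian comparison geometry.
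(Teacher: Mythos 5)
Your argument follows essentially the same route the paper intends: it invokes the classical Price/Schoen monotonicity argument (stress-energy identity from stationarity, radial test field $X = \eta(\rho)\nabla(\rho^2/2)$, Hessian comparison, Gronwall) and observes that nothing in that argument depends on the dimension of the $\ell^2$-target. The paper itself gives no proof beyond the remark that "the same proof holds in the $\ell^2$-setting," so spelling out Price's computation is appropriate, and your overall structure is correct.

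Two small points are worth flagging. First, the two-sided estimate $\operatorname{Hess}(\rho^2/2) = g + A$ with $\abs{A}_g \leq C(m,\kappa)\rho^2$ does not follow from the \emph{upper} bound $\operatorname{sec}_g \leq \kappa$ alone: Hessian comparison under an upper curvature bound gives only the lower estimate $\operatorname{Hess}(\rho^2/2) \geq g - c\kappa\rho^2(g - d\rho \otimes d\rho)$, while the matching upper estimate (and hence the absolute-value bound you use to control $\int_{B_r} S_{ij}A^{ij}\,dv_g$) requires a lower sectional (or Ricci) curvature bound as well. Without that, the positive-semidefinite remainder $B := \rho\operatorname{Hess}\rho - c(\rho)(g - d\rho\otimes d\rho)$ has uncontrolled trace and $S_{ij}B^{ij}$ can have either sign, so $\mathrm{err}(r)$ need not be $O(\kappa r^2 E(r))$. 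This imprecision is inherited from the paper's own phrasing of the constant dependence; since $M$ is a fixed closed manifold, the constant really depends on the bounded geometry of $g$ on $\Omega$ (both curvature bounds), and the argument then goes through. You should state it that way rather than claiming a one-sided curvature bound suffices. Second, the truncation to $u^N = (u^1,\dots,u^N,0,\dots)$ to justify the stress-energy identity is workable but indirect: $u^N$ is not stationary, so you must run the first-variation computation for a general $H^1$ map and pass to the limit on both sides. It is simpler to note that the identity $\frac{d}{dt}\big|_{t=0} E[u\circ\phi_t] = \int_\Omega S_{ij}\nabla^i X^j\,dv_g$ for $H^1(\Omega,\ell^2)$ maps follows directly from dominated convergence applied to the pointwise $t$-differentiable integrand $\abs{du}^2_{(\phi_t^{-1})^*g}\,dv_{(\phi_t^{-1})^*g}$, with dominating function a constant multiple of $\abs{du}^2_g$; stationarity then makes the left side vanish.
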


\begin{definition}\label{def:stable-map}
  We call a harmonic map $u \in H^1(\Omega, \Sph^\infty)$ into the
  \emph{infinite-dimensional} sphere \emph{stable} if
  \begin{equation}\label{ineq:stable-map}
    \int \omega^2 \abs{du}^2 dv_g \leq \int \abs{d\omega}^2 dv_g
    \quad \forall \omega \in C^\infty_0\brr{\Omega}
  \end{equation}
\end{definition}
\begin{remark}
  As one can easily see from the proof of~\cite[Theorem~2.7]{Schoen-Uhlenbeck:1984:min-harm-maps},
  this definition is equivalent to the classical definition
  of stability in the infinite-dimensional setting:
  \begin{equation}
    \frac{d^2}{dt^2}\bigg|_{t=0}\int \abs{d\brr{\frac{u + tw}{\abs{u + tw}}}}^2 dv_g
    \geq 0 \quad \forall w \in C_0^\infty(\Omega, \ell^2).
  \end{equation}
\end{remark}

\begin{lemma}\label{lem:stable-energy-min}
  A stable harmonic map $u \in H^1(\Omega, \Sph^\infty)$ is energy minimizing,
  i.e.
  \begin{equation}
    \int_{\Omega} \abs{du}^2 \leq \int_{\Omega} \abs{dw}^2
    \quad \forall w \in H^1\brr{\Omega, \Sph^\infty} \text{ s.t. } w - u \in H^1_0(\Omega, \ell^2).
  \end{equation}
  In particular, it is stationary.
\end{lemma}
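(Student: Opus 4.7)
The plan is to prove the stronger energy-minimizing property directly by comparing the energies of $u$ and a competitor $w\in H^1(\Omega,\Sph^\infty)$ with $v:=w-u\in H^1_0(\Omega,\ell^2)$, and then to deduce stationarity as an immediate corollary. The guiding principle is the classical Schoen--Uhlenbeck argument, adapted to the infinite-dimensional target, with the stability inequality~\eqref{ineq:stable-map} playing the role that the Bochner--Simons formula plays for minimizers.

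First I would exploit the pointwise constraint $|u|^2=|w|^2=1$. Expanding $|u+v|^2=1$ gives $u\cdot v=-\tfrac12|v|^2$ almost everywhere. Expanding the energy densities gives
\begin{equation}
  \int_\Omega|dw|^2\,dv_g-\int_\Omega|du|^2\,dv_g
  =2\int_\Omega\langle du,dv\rangle\,dv_g+\int_\Omega|dv|^2\,dv_g.
\end{equation}
Next I would test the harmonic map equation $\Delta u=|du|^2u$ coordinate-wise against $v\in H^1_0(\Omega,\ell^2)$. Writing $u=(u^i)$ and $v=(v^i)$ and summing the component identities $\int\langle du^i,dv^i\rangle=\int|du|^2u^iv^i$ (the sum converges absolutely because $\sum_i|u^i|^2=1$, $\sum_i|v^i|^2=|v|^2\in L^1$, and analogous bounds hold for the gradients), one obtains
\begin{equation}
  \int_\Omega\langle du,dv\rangle\,dv_g
  =\int_\Omega|du|^2\,u\cdot v\,dv_g
  =-\tfrac12\int_\Omega|du|^2|v|^2\,dv_g.
\end{equation}
Substituting back yields the key identity
\begin{equation}
  \int_\Omega|dw|^2\,dv_g-\int_\Omega|du|^2\,dv_g
  =\int_\Omega|dv|^2\,dv_g-\int_\Omega|du|^2|v|^2\,dv_g.
\end{equation}

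To finish, I would apply the stability inequality to $\omega=|v|$. This requires two small justifications: (a)~the scalar function $|v|$ lies in $H^1_0(\Omega)$ (standard, via $|v_n|$ for an approximating sequence $v_n\in C^\infty_0$ valued in finite-dimensional subspaces of $\ell^2$), and (b)~inequality~\eqref{ineq:stable-map} extends from $C^\infty_0(\Omega)$ to $H^1_0(\Omega)$ by density (which is immediate once we know, by the monotonicity formula or directly from stability on smooth truncations, that $|du|^2\in L^1_{loc}$). Combined with the pointwise Kato inequality $|d|v||\le|dv|$ valid for $\ell^2$-valued functions, stability gives
\begin{equation}
  \int_\Omega|du|^2|v|^2\,dv_g\le\int_\Omega|d|v||^2\,dv_g\le\int_\Omega|dv|^2\,dv_g,
\end{equation}
whence $\int_\Omega|dw|^2\ge\int_\Omega|du|^2$, proving energy minimality. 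Stationarity is then automatic: for a compactly supported deformation $\phi_t$, the map $u\circ\phi_t$ is an admissible competitor with $u\circ\phi_t-u\in H^1_0(\Omega,\ell^2)$, so $t=0$ is a minimum of the smooth function $t\mapsto E[u\circ\phi_t]$, forcing its derivative to vanish.

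The main technical obstacle I anticipate is the $\ell^2$-valued test-function step: one must verify that $v$ can genuinely be used in the weak harmonic map equation with absolutely convergent coordinate sums, and that Kato's inequality and the $H^1_0$-extension of stability work verbatim for square-summable sequences of components. The finite-dimensional truncation $\mathfrak{p}_n v\to v$ (via orthogonal projectors on $\ell^2$ as in Lemma~\ref{lem:tensor-decomp}) handles this cleanly, reducing every step to the classical finite-dimensional case followed by a passage to the limit controlled by $v\in H^1_0(\Omega,\ell^2)$ and $|du|^2\in L^1_{loc}$.
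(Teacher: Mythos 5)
Your proposal is correct and follows essentially the same route as the paper: expand the energy difference, invoke the weak harmonic-map equation together with the pointwise identity $2\,u\cdot(w-u)=-|w-u|^2$, and close with the stability inequality applied to $|w-u|$. The paper compresses this into a single display and leaves the $\ell^2$-valued technicalities implicit; your extension of~\eqref{ineq:stable-map} to $H^1_0(\Omega)$ and the Kato step are exactly those technicalities made explicit (one can also apply the scalar stability inequality coordinate-wise to each component $v^i$ and sum, which avoids Kato's inequality altogether).
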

\begin{proof}
  We will need the harmonic map equation $\Delta u = \abs{du}^2u$ and the identity
  $2\brt{u, u - w} = \abs{u - w}^2$ for $\abs{u} = \abs{w} = 1$. If
  $w \in H^1\brr{\Omega, \Sph^\infty}$ and
  $u - w \in H^1_0(\Omega, \ell^2)$, one has
  \begin{multline}
    2\int \brt{du, du - dw} = 2\int \brt{u, u - w} \abs{du}^2
    \\ = \int \abs{u - w}^2 \abs{du}^2 \leq \int \abs{du - dw}^2,
  \end{multline}
  where we used the stability inequality~\eqref{ineq:stable-map}. The inequality
  of the lemma then immediately follows.
\end{proof}

In what follows, we aim to generalize the well-known partial regularity
results for harmonic maps when the target is the Hilbert unit sphere $\Sph^\infty$.
Fortunately, Hilbert-valued function spaces and PDE theory exhibit
minimal differences compared to their standard counterparts.
For example, see \cite{Hytonen-Neerven-Veraar-Weis:2016:analysis-banach-1,
Hytonen-Neerven-Veraar-Weis:2023:analysis-banach-3} and the references therein
for a detailed treatment of Banach-valued analysis.

We start with the following extension lemma, whose proof can be found, e.g.,
in \cite[Theorem~2.1.9]{Hytonen-Neerven-Veraar-Weis:2016:analysis-banach-1} or \cite[Section~26.3]{Defant-Floret:1993:tensor-norms}.
\begin{lemma}
  \label{lem:scalar-to-vector}
  Let $(X, \mu)$, $(Y,\nu)$ be measure spaces, $p, q \in [1,\infty)$, and
  $T\colon L^q(\mu) \to L^p(\nu)$ be a continuous linear operator.
  For a Hilbert space $\mathcal{H}$, consider
  $L^q(\mu) \otimes \mathcal{H} \approx Span \set{f(\cdot)v}{f\in L^q(\mu),\ v \in \mathcal{H}}$,
  a dense subset of $L^q(\mu, \mathcal{H})$. Then the operator  $T\otimes \id_{\mathcal{H}} \colon
  L^q(\mu) \otimes \mathcal{H} \to L^p(\nu)\otimes \mathcal{H}$ continuously extends to
  \begin{equation}
    T\otimes \id_{\mathcal{H}} \colon L^q(\mu, \mathcal{H}) \to L^p(\nu,\mathcal{H}).
  \end{equation}
\end{lemma}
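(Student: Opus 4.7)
The plan is to reduce the Hilbert-valued boundedness of $T \otimes \id_{\mathcal{H}}$ to a scalar square-function inequality, and then to linearize that square function via Gaussian randomization.

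By density of $L^q(\mu) \otimes \mathcal{H}$ in $L^q(\mu,\mathcal{H})$ and the bilinearity of $T \otimes \id_{\mathcal{H}}$, it suffices to prove a uniform estimate $\norm{(T \otimes \id_{\mathcal{H}})f}_{L^p(\nu,\mathcal{H})} \leq C(p,q)\norm{T}\,\norm{f}_{L^q(\mu,\mathcal{H})}$ for simple tensors $f = \sum_{i=1}^{n} f_i \otimes e_i$ with $\brc{e_i}$ orthonormal in $\mathcal{H}$. The Hilbert inner product then gives $\abs{f(x)}_{\mathcal{H}}^{2} = \sum_i \abs{f_i(x)}^{2}$ and $\abs{(T\otimes\id_{\mathcal{H}})f(y)}_{\mathcal{H}}^{2} = \sum_i \abs{Tf_i(y)}^{2}$, so the claim reduces to a Marcinkiewicz--Zygmund-type square-function inequality
\[
  \norm{\brr{\textstyle\sum_i \abs{Tf_i}^{2}}^{1/2}}_{L^p(\nu)}
  \leq C(p,q)\,\norm{T}\, \norm{\brr{\textstyle\sum_i \abs{f_i}^{2}}^{1/2}}_{L^q(\mu)}.
\]

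To establish this, I would linearize both square functions using independent standard Gaussians $\brc{\gamma_i}$ on a probability space $(\Omega,\mathbb{P})$. The scalar Khintchine identity $\brr{\sum_i \abs{a_i}^{2}}^{1/2} = c_r^{-1}\brr{\mathbb{E}\abs{\sum_i \gamma_i a_i}^{r}}^{1/r}$, where $c_r := \brr{\mathbb{E}\abs{\gamma_1}^{r}}^{1/r}$, applied pointwise at $r=p$ to $(Tf_i)(y)$ and then integrated by Fubini, yields
\[
  \int_Y \brr{\textstyle\sum_i \abs{Tf_i}^{2}}^{p/2} d\nu
  = c_p^{-p}\,\mathbb{E}\,\norm{T\brr{\textstyle\sum_i \gamma_i f_i}}_{L^p(\nu)}^{p}
  \leq c_p^{-p}\norm{T}^{p}\,\mathbb{E}\,\norm{\textstyle\sum_i \gamma_i f_i}_{L^q(\mu)}^{p}.
\]
Kahane's inequality in the Banach space $L^q(\mu)$ then trades the outer exponent $p$ for $q$ at the cost of a universal constant $K_{p,q}$, and a second application of Fubini combined with the Khintchine identity at $r=q$ evaluates the remaining expectation as $c_q^{-q}\int_X \brr{\sum_i \abs{f_i}^{2}}^{q/2} d\mu$, producing the desired square-function bound with $C(p,q) = c_p^{-1}K_{p,q}c_q$.

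The only nontrivial input is Kahane's inequality on the equivalence of Gaussian moments in an arbitrary Banach space; everything else amounts to pointwise algebra, Fubini, and the scalar mapping property of $T$. The argument trivializes when $p = q$ (no moment trade-off is needed), and the fully symmetric case $p = q = 2$ follows immediately from the isometric identification of the Hilbert tensor product $L^2(\mu,\mathcal{H})$. The genuine content of the lemma is therefore the transfer of the scalar $L^q \to L^p$ mapping property of $T$ between different exponents through the $\ell^{2}$ square function, for which the Gaussian randomization above is the cleanest universal tool.
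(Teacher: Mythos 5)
The paper does not supply its own proof of this lemma---it defers to \cite[Theorem~2.1.9]{Hytonen-Neerven-Veraar-Weis:2016:analysis-banach-1} and \cite[Section~26.3]{Defant-Floret:1993:tensor-norms}---so there is no internal argument to compare against. Your proof is correct and is the standard Gaussian-randomization proof of the Marcinkiewicz--Zygmund vector-valued extension theorem: reduce to finite tensors with orthonormal range vectors, linearize the square functions via the rotational invariance of Gaussians, push the scalar $L^q\to L^p$ bound on $T$ through the randomized sum, trade the outer exponent via Kahane's inequality in the Banach space $L^q(\mu)$, and close with Fubini and the Khintchine identity at exponent $q$. One cosmetic slip: the remaining expectation evaluates to $c_q^{q}\int_X\bigl(\sum_i|f_i|^2\bigr)^{q/2}\,d\mu$, not $c_q^{-q}$; this is in fact consistent with the final constant $c_p^{-1}K_{p,q}c_q$ you report. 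A minor optional simplification: Kahane's inequality is only needed when $p>q$; for $p\le q$ the exponent trade follows directly from Jensen applied to the concave map $t\mapsto t^{p/q}$ inside the expectation.
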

Let us recall that the Laplacian can be expressed as $\Delta = \delta d$, where
$\delta\colon C^\infty(M,\Lambda^1)\to C^\infty(M)$
is formally adjoint to $d$ and $\Lambda^1$ stands for $1$-forms on $M$.
\begin{lemma}\label{lem:lp-dirichlet}
  Let $B_1 \subset M$ be a smooth geodesic ball, $u \in H^{1,p}_0(B_1, \ell^2)$ be an $\ell^2$-valued function, $p\in (1,\infty)$, and
  $\omega \in L^p(B_1, \ell^2 \otimes \Lambda^1)$ be an $\ell^2$-valued 1-form
  such that
  \begin{equation}
    \Delta u = \delta \omega
  \end{equation}
  weakly. Then there exists a constant $C = C(p, m, g)$ such that
  \begin{equation}
    \norm{du}_{L^p(B_1)} \leq C\norm{\omega}_{L^p(B_1)}.
  \end{equation}
\end{lemma}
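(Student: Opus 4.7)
The plan is to reduce the estimate to the classical scalar $L^p$-Dirichlet theory by exploiting the fact that $\Delta u = \delta \omega$ decouples componentwise in $\ell^2$. Fixing an orthonormal basis $\{e_i\}$ of $\ell^2$ and writing $u = \sum_i u^i e_i$, $\omega = \sum_i \omega^i e_i$, I would first test the weak equation against $\phi\, e_i$ for scalar $\phi \in C^\infty_0(B_1)$ to obtain the scalar weak equations $\Delta u^i = \delta \omega^i$ with $u^i \in H^{1,p}_0(B_1)$. Hence the map $\omega \mapsto du$ coincides componentwise with the scalar Dirichlet solution map $T \colon L^p(B_1,\Lambda^1) \to L^p(B_1,\Lambda^1)$, $\omega \mapsto du$.

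For the scalar case, the $L^p$-boundedness of $T$ for $p \in (1,\infty)$ is classical: it is the Calderón--Zygmund / Agmon--Douglis--Nirenberg $L^p$-estimate for the Dirichlet problem on the smooth geodesic ball $B_1$. Writing $T = d \circ \Delta_D^{-1} \circ \delta$, where $\Delta_D^{-1}$ denotes the Dirichlet Green operator, the bound follows either by duality from the standard estimate $\|\Delta_D^{-1} f\|_{W^{2,q}} \le C \|f\|_{L^q}$ with $\tfrac{1}{p}+\tfrac{1}{q}=1$, or by viewing $d \Delta_D^{-1} \delta$ as a bounded pseudodifferential operator of order zero on $B_1$.

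To transfer the estimate to the $\ell^2$-valued setting I would invoke Lemma~\ref{lem:scalar-to-vector}. Choosing an $L^\infty$-bounded local frame $\{\theta^1,\dots,\theta^m\}$ of $\Lambda^1$ over $B_1$ (e.g., an orthonormal frame from normal coordinates), the scalar operator $T$ can be written as an $m\times m$ matrix of bounded operators $T_{jk}\colon L^p(B_1) \to L^p(B_1)$. Each $T_{jk}$ extends to a bounded operator $T_{jk} \otimes \id_{\ell^2}$ on $L^p(B_1,\ell^2)$ with the same norm, and reassembling the matrix gives a bounded operator $T \otimes \id_{\ell^2} \colon L^p(B_1,\Lambda^1 \otimes \ell^2) \to L^p(B_1, \Lambda^1 \otimes \ell^2)$. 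Testing this extension on simple tensors $\phi \otimes v$ with scalar $\phi$ and $v \in \ell^2$ shows that it acts coordinate-by-coordinate as $T$, so it is precisely the map $\omega \mapsto du$ for the vector-valued problem, yielding the desired bound.

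The main analytical content lies in the scalar $L^p$-Dirichlet estimate, which is nevertheless entirely classical; the passage to $\ell^2$ is a formal consequence of the tensor-product extension principle provided by Lemma~\ref{lem:scalar-to-vector}. A minor technical point is that the final constant must absorb the norms of the chosen frame, which is what gives rise to the stated dependence $C = C(p, m, g)$.
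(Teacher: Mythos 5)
Your proof is correct and follows essentially the same route as the paper: reduce componentwise to the classical scalar $L^p$-Dirichlet estimate on the smooth ball, then transfer to the $\ell^2$-valued setting via the tensor-extension principle of Lemma~\ref{lem:scalar-to-vector}. You are a bit more explicit than the paper about two technical points — decomposing the form-valued operator $d\Delta_D^{-1}\delta$ into a matrix of scalar operators so that the lemma applies, and identifying $(T\otimes\id_{\ell^2})\omega$ with $du$ by density of simple tensors (the paper handles the latter by appealing to coordinatewise uniqueness) — but these are minor elaborations of the same argument rather than a different approach.
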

\begin{proof}
  In the class of smooth domains, the scalar problem $\Delta \phi = f$,
  $f\in H^{-1,p}(B_1)$ has a unique solution $\phi\in H^{1,p}_0(B_1)$ so that
  $\Delta_D^{-1}\colon H^{-1,p} \to H^{1,p}_0$ is continuous. By looking
  at the coordinate functions $u^i$, we see that the solution of
  the $\ell^2$-valued problem should be unique as well. Then Lemma~\ref{lem:scalar-to-vector}
  tells us that the operator $d \Delta_D^{-1} \delta\colon L^p(B_1, \ell^2 \otimes \Lambda^1) \to L^p(B_1, \ell^2 \otimes \Lambda^1)$
  is continuous whence the desired estimate follows.
\end{proof}
\begin{remark}
  The alternative proof of the above lemma could be done by constructing
  a function $\phi \in H^{1,q}_0(B_r, \ell^2)$ such that $\norm{du}_{L^p}\norm{d\phi}_{L^q}\leq c_1 \int \brt{du, d\phi}$,
  $1/p + 1/q = 1$. It is implicitly used in showing the solvability of the scalar
  Dirichlet problem.
  It seems such a function can be constructed directly by means of a partition of unity and
  the standard Riesz transform technique in local coordinate charts,
  see, e.g., \cite[Lemma~4.5]{Iwaniec-Scott-Stroffolini:1999:lp-hodge-decompose}.
\end{remark}
\begin{definition}
  For an arbitrary Banach space $X$ and a bounded domain $\Omega \subset M$,
  we say that a function $u\colon\Omega \to X$ belongs to $\BMO(\Omega)$ if
  \begin{equation}
    \norm{u}_{\BMO(\Omega)} := \sup_{B \subset \Omega}
    \ \dashint_{B}\abs{u - [u]_{B}} < \infty,
  \end{equation}
  where $B = B_r(p)$ and  $[u]_{B} := \dashint_{B} u$.
\end{definition}
The same proof of the John--Nirenberg inequality works for any Banach space
$X$, see~\cite*{John-Nirenberg:1961:bmo, Buckley:1999:bmo-doubling-spaces}.
Alternatively, \cite[Theorem 3.2.30]{Hytonen-Neerven-Veraar-Weis:2016:analysis-banach-1}
provides a martingale approach for Banach-valued functions.
\begin{proposition}[John--Nirenberg inequality]
For a domain $\Omega \subset M$ and a number $p\in \brr{0,\infty}$, there exists a constant
$C = C(m,p,g)$ such that
\begin{equation}
  C^{-1} \norm{u}_{\BMO(\Omega)} \leq
  \sup_{B \subset \Omega} \brr{\dashint_{B}\abs{u - [u]_{B}}^p}^{\frac{1}{p}}
  \leq C \norm{u}_{\BMO(\Omega)}.
\end{equation}
\end{proposition}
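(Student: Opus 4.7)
The plan is to deduce the $L^p$--$\BMO$ equivalence from the exponential distribution inequality of John--Nirenberg, and to observe that the classical proof carries over verbatim to $X$-valued functions because only the \emph{scalar} quantity $f(x) := \abs{u(x) - [u]_B}_X$ ever enters the decomposition.

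First I would handle the easy directions separately. For $p \geq 1$, Jensen's inequality applied to the convex function $t \mapsto t^p$ on each ball gives $\dashint_B \abs{u - [u]_B} \leq (\dashint_B \abs{u - [u]_B}^p)^{1/p}$, so the lower bound in the statement holds with $C = 1$. Symmetrically, for $p \leq 1$ the same Jensen inequality runs the other way and yields the upper bound with $C = 1$. Hence only the upper bound for $p > 1$ and the lower bound for $p < 1$ require work, and both reduce to the exponential decay estimate below.

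The heart of the argument is the claim that, for every geodesic ball $B_0 \Subset \Omega$ and every $t > 0$,
\begin{equation}
\frac{\abs*{\set{x \in B_0}{\abs{u(x) - [u]_{B_0}}_X > t}}}{\abs{B_0}} \leq c_1 \exp\brr{-\frac{c_2 \, t}{\norm{u}_{\BMO(\Omega)}}},
\end{equation}
with $c_1, c_2$ depending only on $(m,g)$. To prove this I would run a Calder\'on--Zygmund stopping-time decomposition of the scalar function $f(x) = \abs{u(x) - [u]_{B_0}}_X$ on $B_0$ at a level $s = \alpha \norm{u}_{\BMO(\Omega)}$ with $\alpha$ a fixed large constant. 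Standard dyadic selection on geodesic balls yields disjoint sub-balls $\brc{B_j^{(1)}} \subset B_0$ on which $s < \dashint_{B_j^{(1)}} f \leq c_m s$, with $\sum_j \abs{B_j^{(1)}} \leq \alpha^{-1} \abs{B_0}$, and with $f \leq s$ a.e.\ off the union. The only Banach-valued input is the triangle inequality
\begin{equation}
\abs{[u]_{B_j^{(1)}} - [u]_{B_0}}_X \leq \dashint_{B_j^{(1)}} \abs{u - [u]_{B_0}}_X \leq c_m s,
\end{equation}
which allows one to iterate the decomposition on each $B_j^{(1)}$ with the \emph{same} function $u$ but a shifted reference average. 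After $k$ iterations one obtains a set of measure at most $\alpha^{-k} \abs{B_0}$ on which $\abs{u - [u]_{B_0}}_X > (1 - c_m) k s$. Choosing $\alpha$ large enough and optimizing in $k \sim t / s$ gives the exponential estimate.

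The rest is routine: the layer-cake formula yields
\begin{equation}
\dashint_{B_0} \abs{u - [u]_{B_0}}_X^p \,dv_g = p\int_0^\infty t^{p-1} \frac{\abs*{\brc{f > t} \cap B_0}}{\abs{B_0}} \, dt \leq C(p,m,g) \norm{u}_{\BMO(\Omega)}^p,
\end{equation}
which proves the upper bound for all $p \in (0,\infty)$, and applying this with some $p_0 > 1$ together with Jensen recovers the lower bound for $p < 1$. The main (and rather mild) technical point is verifying that the Calder\'on--Zygmund stopping-time machinery functions on $(\Omega, g)$ with geodesic balls: this requires the doubling property of $v_g$ at scales below the injectivity radius and a Vitali-type selection, both of which are standard in the Coifman--Weiss framework and account for the $(m,g)$-dependence of the constants. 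No genuinely new idea is needed beyond replacing $\abs{\cdot}$ with $\abs{\cdot}_X$ throughout.
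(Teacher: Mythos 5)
Your sketch is correct and matches the classical John--Nirenberg argument that the paper invokes by reference: a Calder\'on--Zygmund stopping-time decomposition run on the \emph{scalar} function $\abs{u(\cdot) - [u]_{B_0}}_X$, iterated via the Banach-valued triangle inequality for averages, followed by the layer-cake formula, with the Coifman--Weiss/doubling framework (as in the cited Buckley reference) supplying the stopping-time machinery on $(\Omega,g)$ — this is precisely the content of the paper's remark that ``the same proof of the John--Nirenberg inequality works for any Banach space $X$.'' The only sketch-level slips (the threshold after $k$ iterations should grow like $c_m k s$ rather than $(1-c_m)ks$, and the recovery of the lower bound for $p<1$ uses H\"older interpolation between $L^p$ and some $L^{p_0}$ with $p_0>1$ rather than Jensen) do not affect the soundness of the argument.
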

\begin{lemma}[Small energy regularity]\label{lem:small-energy-reg}
  Let $B_1(p_0) \subset M$ be a smooth geodesic ball.
  Then there exists $\epsilon = \epsilon(m, g) > 0$ and $\alpha = \alpha(m, g) \in (0,1)$ such that
  for any stationary harmonic map $u \in H^1(B_1(p_0), \Sph^\infty)$ with
  $E[u; B_1(p_0)] < \epsilon$, one has
  $u \in C^{0,\alpha}(B_{1/2}(p_0),\Sph^\infty)$.
\end{lemma}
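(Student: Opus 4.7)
The plan is to combine the monotonicity formula with a Morrey-type decay estimate exploiting the special div-curl structure of the sphere-valued harmonic map system, and then invoke the Campanato characterization of Hölder continuity.

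\textbf{Step 1 (scale-invariant energy bound).} The monotonicity formula~\eqref{eq:monoton-formula} gives
\begin{equation}
    r^{2-m} E[u; B_r(p)] \leq C_0(m,g)\, E[u; B_1(p_0)] < C_0 \epsilon
\end{equation}
for every $B_r(p) \subset B_{3/4}(p_0)$, after a trivial scaling argument.

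\textbf{Step 2 (div-curl form of the equation).} Because $|u|\equiv 1$, one has the coordinatewise identity
\begin{equation}
    |du|^2 u^k = \sum_j \brr{u^k du^j - u^j du^k}\cdot du^j,
\end{equation}
valid in the $\ell^2$ sense. Setting $\omega^{k,j} := u^k du^j - u^j du^k$, the harmonic map equation $\Delta u = |du|^2 u$ (tested coordinatewise) yields
\begin{equation}
    \delta \omega^{k,j} = u^k \Delta u^j - u^j \Delta u^k = (u^k u^j - u^j u^k)|du|^2 = 0,
\end{equation}
so $\Delta u^k = \sum_j \omega^{k,j}\cdot du^j$ is a sum of inner products of co-closed $L^2$ forms with exact $L^2$ forms.

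\textbf{Step 3 (one-step Morrey decay).} On a ball $B_r(p)\subset B_{3/4}(p_0)$, write $u = h + v$ where $h\colon B_r \to \ell^2$ is the $\ell^2$-valued harmonic extension of $u|_{\partial B_r}$ and $v \in H^{1,2}_0(B_r,\ell^2)$. The harmonic part obeys the standard scaling estimate
\begin{equation}
    \int_{B_{\theta r}} |dh|^2 \leq C_1 \theta^m \int_{B_r} |dh|^2 \leq C_1 \theta^m E[u; B_r].
\end{equation}
For $v$, the divergence-free structure of the $\omega^{k,j}$'s lets us rewrite the right-hand side of $\Delta v^k = \sum_j \omega^{k,j}\cdot du^j$ as a divergence (modulo pairing with $v$ in the weak formulation), and apply Lemma~\ref{lem:lp-dirichlet} with $p=2$ together with the Coifman-Lions-Meyer-Semmes Hardy-space estimate used via $\mathcal{H}^1$-$\BMO$ duality. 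Invoking Lemma~\ref{lem:scalar-to-vector} to pass from scalar to $\ell^2$-valued Riesz transforms, one gets
\begin{equation}
    \int_{B_r}|dv|^2 \leq C_2\,\norm{u - [u]_{B_r}}_{\BMO(B_r,\ell^2)}^2 \int_{B_r}|du|^2.
\end{equation}
The John--Nirenberg inequality for Banach-valued functions, combined with the Poincaré inequality and Step~1, bounds the BMO factor by $C_3\,(r^{2-m} E[u;B_r])\lesssim \epsilon$. Combining the two pieces,
\begin{equation}
    E[u; B_{\theta r}] \leq (C_1\theta^m + C_2 C_3\,\epsilon)\, E[u; B_r].
\end{equation}

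\textbf{Step 4 (iteration and Campanato).} Fix $\theta\in (0,1)$, then choose $\epsilon$ so that $C_1\theta^m + C_2 C_3\epsilon \leq \theta^{m - 2\alpha}$ for some $\alpha = \alpha(m,g)\in (0,1)$. Iterating yields
\begin{equation}
    r^{2-m}\int_{B_r(p)}|du|^2 \leq C r^{2\alpha}
\end{equation}
uniformly for $B_r(p)\subset B_{1/2}(p_0)$. The Morrey--Campanato Dirichlet growth theorem -- which applies verbatim to Hilbert-valued functions, since its proof reduces to scalar integral inequalities for the real function $|u-[u]_{B_r}|_{\ell^2}$ -- then gives $u \in C^{0,\alpha}(B_{1/2}(p_0),\Sph^\infty)$.

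\textbf{Main obstacle.} The crux is Step~3: producing the BMO factor in the estimate for $v$. Classically this requires compensated compactness (i.e.\ $\omega^{k,j}\cdot du^j \in \mathcal{H}^1_{\mathrm{loc}}$ whenever $\omega^{k,j}$ is co-closed $L^2$ and $du^j$ is exact $L^2$) together with Fefferman $\mathcal{H}^1$-$\BMO$ duality. In the infinite-dimensional target case the subtle point is uniformity in the coordinate $k$ and summability over $j$: one applies the scalar Hardy-space bound coordinatewise, and the total estimate closes because $\sum_j\norm{du^j}_{L^2}^2 = \norm{du}_{L^2}^2$ and $\sum_j (u^j)^2 \equiv 1$, with the lift to $\ell^2$-valued $L^p$ spaces justified by Lemma~\ref{lem:scalar-to-vector}. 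Stationarity enters only through the monotonicity formula; all remaining steps are purely elliptic and insensitive to the dimension of the target.
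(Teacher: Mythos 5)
Your proposal is correct in outline, and it arrives at the same conclusion, but via a genuinely different classical route. The paper follows Chang--Wang--Yang: it decomposes $u = h + v$ with $h$ the harmonic extension from a well-chosen radius $r\in(1/5,1/4)$, writes $\Delta(u^i - h^i) = \delta\omega^i$ with $\omega^i = \sum_j(u^jdu^i - u^idu^j)(u^j - A^j)$ (note the subtraction of the mean $A = [u]$, which makes $|\omega|\lesssim|du|\,|u-A|$ pointwise), and then applies the subcritical $L^q$ Dirichlet estimate of Lemma~\ref{lem:lp-dirichlet} with $1<q<2$ together with H\"older and the vector-valued John--Nirenberg inequality. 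No Hardy-space machinery appears. You instead keep $\omega^{k,j}=u^kdu^j-u^jdu^k$, view $\Delta u^k = \sum_j\omega^{k,j}\cdot du^j$ as a div-curl quantity, and invoke the Coifman--Lions--Meyer--Semmes $\mathcal H^1$-estimate plus Fefferman duality. Both routes are standard; the paper's has the advantage of avoiding the localization of the CLMS estimate to a ball, which is a nontrivial extension step you elide.

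Three technical points in your Step~3 deserve attention. First, the invocation of Lemma~\ref{lem:lp-dirichlet} with $p=2$ is a red herring: with $p=2$ it yields only $\|dv\|_{L^2}\lesssim\|\omega\|_{L^2}$, which does not produce the BMO factor; the mechanism actually doing the work is the $\mathcal H^1$-BMO pairing $\int_{B_r}|dv|^2 = -\int_{B_r}v\cdot\Delta u$, not any Dirichlet estimate. Second, the standard CLMS/Fefferman argument gives \emph{one} power of BMO, i.e.\ $\int|dv|^2\lesssim\|u-[u]\|_{\mathrm{BMO}}\int|du|^2$, not two; happily, one power suffices for the iteration since by Poincar\'e and John--Nirenberg the BMO seminorm on $B_r$ is $\lesssim\sqrt{\epsilon}$, so the factor can still be made small. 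Third, and most seriously, the summation in the Hilbert-coordinate index does not close under a naive application of Cauchy--Schwarz: $\sum_{k,j}\|\omega^{k,j}\|_{L^2}\|du^j\|_{L^2}$ need not be finite since $\sum_{k,j}\|du^j\|_{L^2}^2$ diverges. One must regroup the double sum as the paper does -- e.g.\ sum in $k$ first, which collapses $\sum_k v^k\omega^{k,j}$ to a quantity involving $\langle v,u\rangle$ and $\sum_k v^kdu^k$ -- or otherwise work tensorially with $\ell^2$-valued div-free forms. You gesture at this ("summability over $j$, uniformity in $k$") but do not resolve it; the regrouping is precisely where the paper's introduction of the subtraction $u^j - A^j$ earns its keep.

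Minor remarks: the claimed Morrey iteration with $E[u;B_{\theta r}]\leq(C_1\theta^m+C_2C_3\epsilon)E[u;B_r]$ is fine once the one-step inequality is established; the application of the Campanato characterization to $\ell^2$-valued functions is legitimate, as the paper also notes. Your observation that stationarity is used only through the monotonicity formula matches the paper's structure.
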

\begin{proof}
 Our proof is an adaptation of~\cite*{Chang-Wang-Yang:1999:reg-harm-maps} for the
 $\ell^2$-valued setting. Basically, we are looking for a number $0 < s < 1/4$ such that
 \begin{equation}\label{eq:desired-bmo}
  \norm{u}_{\BMO(B_{s}(x))}\leq  \frac{1}{2} \norm{u}_{\BMO(B_1(p_0))}
  \quad \forall x \in B_{1/2}(p_0).
\end{equation}
Then one can iterate the inequality by the following
rescaling argument.  Consider a ball $B_1(0) \subset T_x M$ and
the functions
$u_s \colon B_1(0) \to \ell^2$,
$u_s(y) = u(\operatorname{exp}_{x}(s y))$, with the metric
$g_s(y) = g(\operatorname{exp}_{x}(s y))$.
By monotonicity formula~\eqref{eq:monoton-formula}, its energy
\begin{equation}\label{eq:rescale-energy}
  E[u_s; B_1(0)] = s^{2-m} E[u; B_{s}(x)],
\end{equation}
is small, so
we derive~\eqref{eq:desired-bmo} for $u_s$, and therefore,
\begin{equation}
  \norm{u}_{\BMO(B_{s^k}(x))}\leq  \frac{1}{2^k} \norm{u}_{\BMO(B_1(p_0))}.
\end{equation}
Then one could apply Campanato's lemma~\cite*[Chapter~14, Proposition~A.2]{Taylor:2023:pde-3}
either to $\brt{u(\cdot), b}$, where $b\in \ell^2$ and $\abs{b} \leq 1$, or to $u$ itself,
where the same proof works for Banach-valued functions as long as we know that $C^{0,\alpha} = B^\alpha_{\infty, \infty}$ is
a certain Besov space~\cite[Corollary 14.4.26]{Hytonen-Neerven-Veraar-Weis:2023:analysis-banach-3}.
Thus, we obtain $u \in C^{0,\alpha}(B_{1/4}(p_0), \ell^2)$.

Let us fix some $1 < q <2$, define $p^{-1} = q^{-1} - \frac{1}{m}$ and
denote $B_r := B_r(x)$.
In fact, estimate~\eqref{eq:desired-bmo} will follow if we prove
\begin{equation}\label{eq:concentric-bmo}
  \brr{\dashint_{B_{s}}\abs{u - [u]_{B_{s}}}^p}^{\frac{1}{p}}\leq  \frac{1}{2} \norm{u}_{\BMO(B_{1/4})}
  \quad \forall x \in B_{3/4}(p_0)
\end{equation}
and apply the rescaling argument with $u_\delta$, $\delta = s'/s$, establishing~\eqref{eq:concentric-bmo}
for all $s' \in \brr{0, s}$.

 To prove~\eqref{eq:concentric-bmo}, we choose
 $\frac{1}{5} < r < \frac{1}{4}$ so that
 \begin{equation}
    \int_{\partial B_{r}} \abs{u - A}^p \leq 21 \int_{B_{1/4}} \abs{u - A}^p,
 \end{equation}
 where $A := [u]_{B_{1/4}}$. Let $h\colon B_{r} \to \mathbb{B}^\infty \subset \ell^2$ be
 the harmonic extension of $u|_{\partial B_{r}}$, i.e. $\Delta h = 0$. Writing
 $h(x) - A = \int_{\partial B_{r}} \partial_{r}G_x(y)[u(y) - A]$, where $G_x(y)$ is the Green function
 $-\Delta G_x = \delta_x$, $G_x|_{\partial B_{r}} = 0$, we see that
 the previous inequality yields
 \begin{equation}
  \sup_{B_{1/6}}\abs{d h}^p \leq c_2 \int_{B_{1/4}} \abs{u - A}^p
 \end{equation}
 with $c_2 = c_2(p,m,g)$.

 Let us recall the harmonic map equation $\Delta u^i =  \abs{du}^2 u^i$.
 Since we are dealing with sphere-valued maps, one has
 $\sum_j u^j du^j = 0$, and as a consequence,
 \begin{equation}
  \Delta \brr{u^i - h^i} = \sum_j \brt{u^i du^j - u^j du^i, d(u^j-A^j)}
  = \delta \omega^i,
 \end{equation}
 weakly, where $\delta$ is the formal adjoint to $d$ and
 $\omega^i  = \sum_j \brr{u^j du^i - u^i du^j}(u^j-A^j)$. We can estimate
 $\abs{\omega^i} \leq \brr{\abs{u^i} \abs{du} + \abs{du^i}}\abs{u-A}$, so
 \begin{equation}
  \abs{\omega}^2 \leq 4 \abs{du}^2\abs{u-A}^2
 \end{equation}
 and $\omega \in L^2(\Omega, \ell^2\otimes\Lambda^1)$.
 Applying Lemma~\ref{lem:lp-dirichlet} and the Hölder inequality,
 we obtain
 \begin{align}
  \int_{B_{r}}\abs{d(u-h)}^q
      &\leq c_3 \int_{B_{r}}\abs{\omega}^q
  \\  &\leq c_4 \brr{\int_{B_{r}}\abs{du}^2}^{q/2}
                \brr{\int_{B_{r}}\abs{u - A}^{\frac{2q}{2-q}}}^{\frac{2-q}{2}}
  \\  &\leq c_4 \epsilon^{q/2} \brr{\int_{B_{1/4}}\abs{u - A}^{\frac{2q}{2-q}}}^{\frac{2-q}{2}}.
 \end{align}

 Now, for any $0 < s < 1/6$, the Sobolev inequality and the
 John-Nirenberg inequality yield
 \begin{align}
  \frac{1}{s^m}\int_{B_{s}}\abs{u-h(x)}^p
      &\leq \frac{c_5}{s^m} \int_{B_{r}}\abs{u-h}^p
          + \frac{c_5}{s^m} \int_{B_{s}}\abs{h-h(x)}^p
  \\  &\leq \frac{c_6}{s^m} \brr{\int_{B_{r}}\abs{d(u-h)}^q}^\frac{p}{q}
          +  c_5 s^p \sup_{B_{1/6}}\abs{d h}^p
  \\  &\leq c_7\frac{\epsilon^{p/2}}{s^m} \brr{\int_{B_{1/4}}\abs{u - A}^{\frac{2q}{2-q}}}^{\frac{(2-q)p}{2q}}
          +  c_7 s^p \int_{B_{1/4}} \abs{u - A}^p
  \\  &\leq c_8\brr{\frac{\epsilon^{p/2}}{s^m} + s^p}\norm{u}_{\BMO(B_{1/4})}^p,
 \end{align}
 where we used the John-Nirenberg inequality, and $c_8 = c_8(p,m,g)$. Combining it with the inequality
\begin{equation}
  \dashint_{B_{s}}\abs{u -  [u]_{B_{s}}}^p \leq
  2^p  \dashint_{B_{s}}\abs{u - h(x)}^p,
\end{equation}
one can choose $s$ and then $\epsilon$ to ensure~\eqref{eq:concentric-bmo}.
\end{proof}

\begin{lemma}\label{lem:cont-harm-smooth}
  Let $u\in H^1(\Omega,\Sph^\infty)$ be a (weakly) harmonic map and
  $u \in C^{0,\alpha}$ for some $\alpha > 0$. Then it is smooth.
\end{lemma}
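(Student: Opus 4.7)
The strategy is a standard bootstrap from $C^{0,\alpha}$ all the way to $C^\infty$, working locally on small geodesic balls and exploiting the antisymmetric reformulation of the sphere-valued harmonic map equation that already appeared in the proof of Lemma~\ref{lem:small-energy-reg}. Fix $x_0\in\Omega$ and a smooth geodesic ball $B_r(x_0)\subset\Omega$; by the Hölder hypothesis, $\sup_{B_r(x_0)}\abs{u-A}\leq C r^\alpha$ in $\ell^2$, where $A:=[u]_{B_r(x_0)}$. The constraint $\sum_j u^j du^j=0$ together with $\Delta u^i=\abs{du}^2 u^i$ rewrites coordinatewise as $\Delta(u^i-A^i)=\delta\omega^i$ with $\omega^i=\sum_j(u^j du^i-u^i du^j)(u^j-A^j)$ and the pointwise bound $\abs{\omega}^2\leq 2\abs{du}^2\abs{u-A}^2$ (as in Lemma~\ref{lem:small-energy-reg}).

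First I would improve the integrability of $du$. Let $h$ be the harmonic extension of $u|_{\partial B_r}$ componentwise (so $u-h\in H^1_0(B_r,\ell^2)$), and apply Lemma~\ref{lem:lp-dirichlet} to $u-h$ with exponent $q$: this gives $\norm{d(u-h)}_{L^q(B_r)}\leq C\norm{\omega}_{L^q(B_r)}\leq C r^\alpha\norm{du}_{L^q(B_r)}$. Combining with the standard gradient decay for harmonic functions $\norm{dh}_{L^q(B_{\theta r})}\leq C\theta^{m/q}\norm{dh}_{L^q(B_r)}$ yields
\begin{equation}
  \int_{B_{\theta r}}\abs{du}^q\,dv_g
  \leq \brr{C\theta^m+C r^{q\alpha}}\int_{B_r}\abs{du}^q\,dv_g,
\end{equation}
so choosing first $\theta$ small and then $r$ small produces a Campanato-type decay for $\abs{du}^q$. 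This iterates to Morrey decay at a slightly larger exponent; combined with a Gehring-type reverse-Hölder step (applied coordinatewise and tensored to $\ell^2$ via Lemma~\ref{lem:scalar-to-vector}) one upgrades to $du\in L^p_{loc}$ for every $p<\infty$. Once $du\in L^p_{loc}$ with $p>m$, the source $\abs{du}^2 u$ in $\Delta u=\abs{du}^2 u$ lies in $L^{p/2}_{loc}(\Omega,\ell^2)$ with $p/2>m/2$; scalar Calderón–Zygmund theory lifted by Lemma~\ref{lem:scalar-to-vector} gives $u\in H^{2,p/2}_{loc}(\Omega,\ell^2)$, and Sobolev embedding then yields $u\in C^{1,\beta}_{loc}(\Omega,\Sph^\infty)$ for some $\beta\in(0,1)$.

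From $C^{1,\beta}$ onwards the argument is routine: since $u\in C^{k,\beta}_{loc}$ implies $\abs{du}^2 u\in C^{k-1,\beta}_{loc}$, applying interior Schauder estimates (again tensorized to $\ell^2$-valued data as in Lemma~\ref{lem:scalar-to-vector}) gives $u\in C^{k+1,\beta}_{loc}$, and induction produces $u\in C^\infty_{loc}(\Omega,\Sph^\infty)$. The main obstacle is the first stage, namely turning $C^{0,\alpha}$-control on $u$ into $L^p$-control on $du$ for all $p<\infty$ in the $\ell^2$-valued setting; the analytic content is classical for scalar harmonic maps into $\Sph^n$, and the extension to $\Sph^\infty$ is precisely what the vector-valued $L^p$ Dirichlet estimate of Lemma~\ref{lem:lp-dirichlet} together with the tensorization of Lemma~\ref{lem:scalar-to-vector} were prepared for, so the bootstrap goes through provided one is careful to track constants independently of coordinate index.
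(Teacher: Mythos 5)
Your overall strategy (local bootstrap from $C^{0,\alpha}$, rewriting $\Delta u^i=\delta\omega^i$ with $\omega^i=\sum_j(u^jdu^i-u^idu^j)(u^j-A^j)$, using Lemma~\ref{lem:lp-dirichlet} for the $\ell^2$-valued $L^q$ Dirichlet estimate, Campanato iteration, then Calderón--Zygmund and Schauder) is the classical route and is more explicit than the paper's proof, which simply cites Taylor's Chapter~14, Proposition~12B.1 and the Chang--Wang--Yang argument and checks that the required vector-valued ingredients (parametrix mapping properties, Banach-valued Sobolev embeddings, complex interpolation) all hold with $\ell^2$ targets. Being more self-contained is a legitimate choice.

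However, the pivotal step ``combined with a Gehring-type reverse-H\"older step\,\dots one upgrades to $du\in L^p_{loc}$ for every $p<\infty$'' is a genuine gap. The Campanato iteration you set up gives, for $q\leq 2$, the Morrey decay $\int_{B_r}\abs{du}^q\leq C_\epsilon r^{m-\epsilon}$ for every $\epsilon>0$ (with constant blowing up as $\epsilon\to 0$); this is Morrey space membership, not $L^p$ membership. Gehring's lemma, applied to the Caccioppoli-type reverse H\"older inequality for $\abs{du}$, only yields a fixed small improvement $du\in L^{2+\delta}_{loc}$ and does not iterate to arbitrarily large $p$. Note also that a Gehring step would operate on the scalar $\abs{du}_{\ell^2}$, so the appeal to Lemma~\ref{lem:scalar-to-vector} there is misplaced. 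The clean way to close the bootstrap once the Morrey decay $\int_{B_r}\abs{du}^2\leq C r^{m-\epsilon}$ with $\epsilon<1$ is available is \emph{not} a reverse-H\"older step but a Riesz potential estimate: writing $\Delta u=f$ with $\abs{f}=\abs{du}^2\in M^{1,m-\epsilon}$ and expanding the Newtonian potential of $f$ dyadically gives $du\in L^\infty_{loc}$ directly, after which $\abs{du}^2u\in L^\infty_{loc}$, Calder\'on--Zygmund gives $u\in H^{2,p}_{loc}$ for all $p$, Sobolev gives $u\in C^{1,\beta}_{loc}$, and your Schauder induction finishes. Alternatively one compares $du$ to its average (excess decay of $\int_{B_{\theta r}}\abs{du-(du)_{B_{\theta r}}}^2$) to reach $C^{1,\beta}$ without passing through $L^p$. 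With either replacement your plan goes through; as written, the Gehring step neither suffices nor is correctly stated.
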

\begin{proof}
  The proof can be found in~\cite*[Chapter~14, Proposition~12B.1]{Taylor:2023:pde-3}.
  Let us show that one has all the necessary ingredients
  in the Hilbert-valued case. If $\R^m \approx U \Subset \Omega$, a parametrix $E \in PDO^{-2}(\R^m)$ of
  the Laplacian $\Delta_g |_U$ has the classical mapping property,
  \begin{equation}
    E \colon H^{s,p}(\R^m, \ell^2) \to H^{s+2,p}(\R^m, \ell^2)
      \quad\text{for } s\in\R,\ p \in (1, \infty),
  \end{equation}
  by Lemma~\ref{lem:scalar-to-vector}.
  The Sobolev embeddings hold for an arbitrary Banach space $X$,
  \begin{equation}
    H^{s,p}(\R^m, X)\hookrightarrow B^s_{p,\infty}(\R^m, X) \hookrightarrow C^{k, \gamma}(\R^m, X)
      \quad\text{for } s\geq 0,\  s - m/p \geq k + \gamma
  \end{equation}
  as one can see from~\cite[Proposition~14.4.18, Corollary~14.4.27]{Hytonen-Neerven-Veraar-Weis:2023:analysis-banach-3}.
  Finally, \cite[Theorem~14.7.12]{Hytonen-Neerven-Veraar-Weis:2023:analysis-banach-3} ensures
  the complex interpolation identity
  \begin{equation}
    H^{s_\theta, p_\theta}(\R^m, \ell^2) = \brs{H^{s_0, p_0}(\R^m, \ell^2),
    H^{s_1, p_1}(\R^m, \ell^2)}_\theta
    \quad\text{ for } p_i \in (1,\infty),\ s_i \in \R,
  \end{equation}
  where $\frac{1}{p_\theta} =  \frac{1-\theta}{p_0} + \frac{\theta}{p_1},\ s_\theta =  (1-\theta)s_0+ \theta s_1$.

  Alternatively, instead of Hölder continuity of $u$, it is sufficient
  to require only $u \in C(\Omega, \ell^2)$.
  Namely, one repeats the arguments of Section~3 in~\cite{Chang-Wang-Yang:1999:reg-harm-maps} to show
  that $u \in C^{1,\gamma}(\Omega, \ell^2)$ for some $\gamma > 0$. The only caveat is that now, the embedding
  $H^1(B_1, \ell^2) \hookrightarrow L^2(B_1, \ell^2)$ is no longer compact,
  so \cite*[Lemma~3.3]{Chang-Wang-Yang:1999:reg-harm-maps} should be replaced by
  an analogous statement for the coordinate functions of $u$. Then one can find
  a harmonic function $h\colon B_{1/2} \to \ell^2$ of the form $h = (h^1,\cdots,h^N,0,0,\cdots)$ such that in the notation of
  \cite*[Lemma~3.3]{Chang-Wang-Yang:1999:reg-harm-maps}, one has
  \begin{equation}
    \int_{B_{1/2}}\abs{w - h}^2 = \sum_{i\leq N}\int_{B_{1/2}}\abs{w^i - h^i}^2
    + \sum_{i> N}\int_{B_{1/2}}\abs{w^i}^2 \leq \epsilon^2
  \end{equation}
  for large enough $N$.
  The rest of the proof remains unchanged, so that we obtain
  $u \in C^{1,\gamma}(\Omega, \ell^2)$. Then the standard elliptic regularity
  theory applied to $u_b:= \brt{u,b} \in C^{1,\gamma}(\Omega)$, where $b\in\ell^2$
  and $\abs{b} \leq 1$, implies
  locally uniform $C^k$ estimates on $\brc{u_b}$, which imply
  $u \in C^{k}(\Omega, \ell^2)$ for $k=2$, and one repeats inductively for any $k \in \mathbb{N}$.
\end{proof}

\begin{lemma}\label{lem:subseq-strong-conv}
  Let $\brc{u_\epsilon} \subset H^1(\Omega, \Sph^\infty)$ be a bounded
  sequence of stable harmonic maps. Then there exists a stable harmonic map
  $u \in H^1(\Omega, \Sph^\infty)$ such that up to a subsequence,
  $\abs{du_\epsilon}^2 \to \abs{du}^2$ in $L^1(\Omega')$ for every $\Omega'\Subset \Omega$. In fact,
  $\sum_i u_\epsilon^i \otimes u_\epsilon^i =: \mathfrak{t}_{\epsilon}$ $\to$
  $\mathfrak{t} := \sum_i u^i \otimes u^i$ in $\tensor{H^1(\Omega')}$.
\end{lemma}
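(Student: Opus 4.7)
The plan is to extract a weak$^*$ limit of the tensors $\mathfrak{t}_\epsilon$, identify the limit as coming from a sphere-valued map, and then upgrade weak$^*$ convergence to strong convergence via Lemma~\ref{lem:strong-conv}. By~\eqref{eq:tens-norm-decomp}, $\norm{\mathfrak{t}_\epsilon}_{\mathcal{N}[H^1(\Omega)]} = v_g(\Omega) + E[u_\epsilon]$ is uniformly bounded, so the Banach--Alaoglu theorem in the duality $\mathcal{K}[H^1(\Omega)]^* \approx \mathcal{N}[H^1(\Omega)]$ yields a subsequence with $\mathfrak{t}_\epsilon \overset{w^*}{\to} \mathfrak{t}$; the limit remains positive and self-adjoint, so I would spectrally decompose it as $\mathfrak{t} = \sum u^i \otimes u^i$ with mutually orthogonal $u^i \in H^1(\Omega)$. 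By Corollary~\ref{lem:l-inf-convergence}, $\m{\mathfrak{t}_\epsilon} \to \m{\mathfrak{t}}$ in $L^1(\Omega)$, and since $\m{\mathfrak{t}_\epsilon} = \abs{u_\epsilon}^2 \equiv 1$ a.e., one obtains $\sum \abs{u^i}^2 = \m{\mathfrak{t}} \equiv 1$ a.e., so $u := (u^1, u^2, \ldots) \in H^1(\Omega, \Sph^\infty)$.

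Next I would verify the hypotheses of Lemma~\ref{lem:strong-conv} with $\mu_\epsilon := \abs{du_\epsilon}^2 dv_g$. After a further subsequence, $\mu_\epsilon \overset{w^*}{\to} \mu$ in $\mathcal{M}(\Omega)$ by the uniform bound on $E[u_\epsilon]$. The harmonic map equation $\Delta u^i_\epsilon = \abs{du_\epsilon}^2 u^i_\epsilon = u^i_\epsilon \mu_\epsilon$ supplies hypothesis~(1), and hypothesis~(2) is automatic: since $\m{\mathfrak{t}_\epsilon} \equiv 1 \equiv \m{\mathfrak{t}}$ a.e.\ by the previous step, the integrand $\brr{\m{\mathfrak{t}_\epsilon} - \m{\mathfrak{t}}} \omega^2$ vanishes identically. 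Crucially, \emph{every} point of $\Omega$ is stable for $\{\mu_\epsilon\}$: the definitional stability inequality~\eqref{ineq:stable-map} gives $Q_{\mu_\epsilon}|_{C_0^\infty(\Omega')} \geq 0$ for every open $\Omega' \subset \Omega$, so $\ind(\mu_\epsilon, \Omega') = 0$ uniformly in $\epsilon$. Lemma~\ref{lem:strong-conv} then produces strong convergence $\mathfrak{t}_\epsilon \to \mathfrak{t}$ in $\mathcal{N}[H^1(\Omega)]$.

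Continuity of $\md{\cdot}\colon \mathcal{N}[H^1(\Omega)] \to L^1(\Omega)$ immediately gives $\abs{du_\epsilon}^2 = \md{\mathfrak{t}_\epsilon} \to \md{\mathfrak{t}} = \abs{du}^2$ in $L^1(\Omega)$ and identifies $\mu = \abs{du}^2 dv_g$. Weak harmonicity and local stability of $u$ then follow from Lemma~\ref{lem:regularity-in-good-pt} with an empty set of unstable points; alternatively, stability of $u$ follows directly by passing to the limit in~\eqref{ineq:stable-map} using the $L^1$-convergence of $\abs{du_\epsilon}^2$ on the left-hand side. The main subtlety I foresee is the nonuniqueness of the spectral decomposition $\mathfrak{t} = \sum u^i \otimes u^i$: the individual $u^i$ are not canonically related to the $u^i_\epsilon$, which is exactly why the conclusion is naturally phrased at the level of tensors rather than components. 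Beyond that, the argument merely chains Corollary~\ref{lem:l-inf-convergence} with Lemma~\ref{lem:strong-conv}, the only nontrivial input being the observation that stability of each $u_\epsilon$ upgrades every point of $\Omega$ into a stable point for the sequence $\{\mu_\epsilon\}$.
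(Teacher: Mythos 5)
Your argument is correct and follows essentially the same route as the paper: extract the weak$^*$ limit $\mathfrak{t}$, decompose it as $\sum u^i \otimes u^i$, use Corollary~\ref{lem:l-inf-convergence} to get $|u|^2 \equiv 1$, observe that the stability inequality~\eqref{ineq:stable-map} makes every point of $\Omega$ stable for $\mu_\epsilon = |du_\epsilon|^2 dv_g$, and then invoke Lemmas~\ref{lem:strong-conv} and~\ref{lem:regularity-in-good-pt} with $D = \varnothing$. You spell out the verification of the hypotheses of Lemma~\ref{lem:strong-conv} more explicitly than the paper does, but the substance is identical.
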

\begin{proof}
  Since $\norm{\mathfrak{t}_\epsilon}_{\tensor{H^1}} = \norm{u_\epsilon}_{H^1}^2$,
  one can pick a subsequence such that
  \begin{equation}
    \mathfrak{t}_\epsilon \oset{w^*}{\to} \mathfrak{t}
  \quad \text{in } \tensor{H^1(\Omega)}.
  \end{equation}
  The positive operator $\mathfrak{t} \geq 0$ can be written as
  $\mathfrak{t} = \sum_i u^i \otimes u^i$ for some $u \in H^1(\Omega, \ell^2)$. By Corollary~\ref{lem:l-inf-convergence},
  one concludes that $\m{\mathfrak{t}}=\abs{u}^2 \equiv 1$.

  One may argue as in \cite*[Lemma~2.2]{Hong-Wang:1999:stable-harmonic-maps} by using
  partial regularity estimates for harmonic maps outside of a set of zero $(m\!-\!2)$-Hausdorff measure. However,
  in the infinite-dimensional case, the stability inequality~\eqref{ineq:stable-map} is so strong
  that one can set $\mu_\epsilon = \abs{du_\epsilon}^2dv_g$, choose
  a subsequence $\mu_\epsilon \oset{w^*}{\to} \mu$, and apply
  Lemmas~\ref{lem:strong-conv} and~\ref{lem:regularity-in-good-pt} with $D = \varnothing$.
\end{proof}

\begin{theorem}\label{thm:reg-harm-map}
  Let $M$ be a Riemannian manifold with $m = \dim M\geq 2$,
  and let $u \in H^1_{loc}(M,\Sph^\infty)$ be a \emph{locally stable} harmonic map. Then
  there exists a closed subset $\Sigma$ of Hausdorff dimension at most $m-7$ such that
  $u \in C^\infty(M\setminus \Sigma, \Sph^\infty)$. If $m = 7$, the set of singularities
  $\Sigma$ is discrete.
\end{theorem}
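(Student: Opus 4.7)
The plan is to follow Schoen--Uhlenbeck's partial regularity scheme, adapted to the infinite-dimensional target $\Sph^\infty$ using the analytic tools assembled in this section. Since stability implies stationarity (Lemma~\ref{lem:stable-energy-min}), the monotonicity formula~\eqref{eq:monoton-formula} yields a well-defined upper semicontinuous density
\[
  \Theta(x) := \lim_{r \to 0} e^{Cr}\, r^{2-m}\, E[u; B_r(x)],
\]
and I set $\Sigma := \set{x \in M}{\Theta(x) \geq \epsilon_0}$ for the threshold $\epsilon_0 = \epsilon_0(m,g)$ of Lemma~\ref{lem:small-energy-reg}. Then $\Sigma$ is closed, and for every $x \notin \Sigma$ some small ball $B_r(x)$ has sub-threshold normalized energy, so Lemma~\ref{lem:small-energy-reg} gives $u \in C^{0,\alpha}$ on a neighborhood of $x$ and Lemma~\ref{lem:cont-harm-smooth} upgrades this to $u \in C^\infty(M\setminus\Sigma, \Sph^\infty)$.

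To control $\dim_{\mathcal{H}}\Sigma$, I would carry out blow-up analysis followed by Federer dimension reduction. At $x_0 \in \Sigma$ and $r_i \to 0$, the rescalings $u_{r_i}(y) := u(\exp_{x_0}(r_i y))$ are stable harmonic maps on Euclidean balls with uniformly bounded energy by monotonicity; Lemma~\ref{lem:subseq-strong-conv} extracts a subsequence converging strongly in $H^1_{\mathrm{loc}}(\R^m, \Sph^\infty)$ to a stable harmonic tangent map $u_\star$. Strong convergence transports the monotonicity identity, and the constancy $r \mapsto r^{2-m} E[u_\star; B_r] \equiv \Theta(x_0)$ combined with the equality case of the monotonicity forces $u_\star$ to be $0$-homogeneous: $u_\star(y) = \psi(y/|y|)$ for a stable harmonic map $\psi\colon\Sph^{m-1}\to\Sph^\infty$. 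Federer's iteration — at each step blowing up a tangent at a non-origin singular point, producing a new tangent that is translation-invariant along one additional Euclidean direction — reduces the bound $\dim_{\mathcal{H}}\Sigma \leq m-7$ to the following rigidity: \emph{no nonconstant stable $0$-homogeneous harmonic map $\varphi\colon\R^\ell\to\Sph^\infty$, smooth outside the origin, can exist for $\ell \leq 6$}.

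For the rigidity, write $\varphi(y) = \psi(y/|y|)$ and test~\eqref{ineq:stable-map} with $\omega(y) = \chi(|y|)\,\eta(y/|y|)$. Optimizing the radial factor via the sharp one-dimensional Hardy inequality
\[
  \inf_{\chi \in C^\infty_c(0,\infty)}
  \frac{\int_0^\infty \chi'(r)^2 r^{\ell-1}\,dr}{\int_0^\infty \chi(r)^2 r^{\ell-3}\,dr} = \frac{(\ell-2)^2}{4}
  \quad (\ell \geq 3)
\]
yields the spherical stability
\[
  \int_{\Sph^{\ell-1}}\!\eta^2 |d\psi|^2\,d\sigma \leq \tfrac{(\ell-2)^2}{4}\int_{\Sph^{\ell-1}}\!\eta^2\,d\sigma + \int_{\Sph^{\ell-1}}\!|d\eta|^2\,d\sigma
  \quad \forall \eta \in C^\infty(\Sph^{\ell-1}).
\]
For $\ell \geq 4$, Lemma~\ref{lem:stable-energy-min} makes $\varphi$ energy-minimizing, so Xin's classical theorem — any stable harmonic map from $\Sph^n$, $n \geq 3$, to any Riemannian target is constant, proved via Jacobi fields generated by conformal Killing vectors — forces $\psi$ constant. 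For $\ell = 3$, setting $\eta \equiv 1$ gives $E[\psi] \leq \sigma_2/4 = \pi$, contradicting the classical energy gap $E[\psi] \geq 4\pi$ for nonconstant harmonic maps from $\Sph^2$ to any sphere. For $\ell = 2$, the Hardy constant is $0$, so the inequality collapses to $\int|d\psi|^2 \leq 0$ and $\psi$ is constant. The main technical obstacle is verifying that classical Federer reduction and the invocation of Xin's theorem go through in the $\ell^2$-valued category; the required strong $H^1$-compactness at each blow-up is supplied by Lemma~\ref{lem:subseq-strong-conv}, and Xin's argument extends unchanged since it depends only on the domain. Finally, for $m = 7$ we obtain $\dim_{\mathcal{H}}\Sigma = 0$, and an extra blow-up at any would-be accumulation point of $\Sigma$ would produce a $1$-dimensional singular stratum in a tangent, violating the $\ell = 6$ rigidity; hence $\Sigma$ is discrete.
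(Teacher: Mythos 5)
Your overall architecture — density of the stationary monotone quantity to define $\Sigma$, small-energy regularity plus $\varepsilon$-regularity to get smoothness off $\Sigma$, blow-up with the strong $H^1$-compactness of Lemma~\ref{lem:subseq-strong-conv}, and Federer dimension reduction to reduce to rigidity of $0$-homogeneous stable tangent maps — is exactly the skeleton of the paper's proof, which generalizes Hong--Wang with Lemma~\ref{lem:subseq-strong-conv} replacing their compactness lemma. Your derivation of the spherical inequality from the Hardy cut-off is also correct and is the right tool. The problem is the final rigidity step.

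The gap is in the case $\ell \in \brc{4,5,6}$. You invoke Xin's theorem, which requires $\psi\colon\Sph^{\ell-1}\to\Sph^\infty$ to be a \emph{stable} harmonic map, i.e.\ $\int_{\Sph^{\ell-1}}\eta^2\abs{d\psi}^2 \leq \int_{\Sph^{\ell-1}}\abs{d\eta}^2$ for all test $\eta$. But the inequality you derive from stability of $\varphi$ via the Hardy optimization is
\begin{equation}
  \int_{\Sph^{\ell-1}}\eta^2\abs{d\psi}^2
  \leq \tfrac{(\ell-2)^2}{4}\int_{\Sph^{\ell-1}}\eta^2 + \int_{\Sph^{\ell-1}}\abs{d\eta}^2,
\end{equation}
which is \emph{strictly weaker} than stability of $\psi$; the slack $\tfrac{(\ell-2)^2}{4}$ is $1$, $\tfrac{9}{4}$, and $4$ for $\ell=4,5,6$. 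Nor does energy-minimality of $\varphi$ (Lemma~\ref{lem:stable-energy-min}) rescue this: a variation $\psi_t$ of $\psi$ does not produce an admissible competitor $\psi_t(y/\abs{y})$ for $\varphi$ on a ball, because the boundary values change, and the natural cut-off interpolation re-introduces exactly the Hardy slack. So Xin's hypothesis is simply not verified, and the deduction ``forces $\psi$ constant'' does not follow. The paper avoids this by following Lin--Wang (2006), Proposition~2.5, whose argument turns the \emph{same} stability-with-slack into a sharp quantitative constraint: a nonconstant $0$-homogeneous stable harmonic map $\varphi\colon\R^{d+1}\to\Sph^\infty$ forces $\tfrac{(d-1)^2}{4d}\geq A$, where $A$ is the constant in~\eqref{ineq:stable-map}; with $A=1$ here, $(d-1)^2\geq 4d$ fails precisely for $d\leq 5$, i.e.\ $\ell=d+1\leq 6$. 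That is a genuinely different closing step, and it is the one that works. Your $\ell=2$ case is fine, and your $\ell=3$ case is plausible, but it also requires checking that the Sacks--Uhlenbeck $4\pi$ energy gap for nonconstant harmonic $\Sph^2\to N$ is valid for $N=\Sph^\infty$; this should hold since $\Sph(\ell^2)$ has constant sectional curvature $1$, but it is an extra verification you do not supply (and it is unnecessary once the Lin--Wang inequality is used, since $\tfrac{(d-1)^2}{4d}<1$ already covers $d=2$).
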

\begin{proof}
  The proof is a generalization of \cite*{Hong-Wang:1999:stable-harmonic-maps}. Let
  $\Sigma$ be the set of singular points of $u$ (i.e., the complement
  of the largest open set where $u$ is smooth). Since stable harmonic maps
  are stationary by Lemma~\ref{lem:stable-energy-min}, it follows from
  the monotonicity formula,
  Lemmas~\ref{lem:small-energy-reg} and~\ref{lem:cont-harm-smooth} that
  \begin{equation}
    \Sigma = \set*{p \in M}{\lim_{r\to 0} r^{2-m} \int_{B_{r}(p)} \abs{du}^2 \geq \epsilon_0}
  \end{equation}
  for some $\epsilon_0 > 0$.

  To estimate the Hausdorff dimension of $\Sigma$, one follows \cite*{Hong-Wang:1999:stable-harmonic-maps}
  with \cite*[Lemma~2.2]{Hong-Wang:1999:stable-harmonic-maps} replaced by Lemma~\ref{lem:subseq-strong-conv}.
  Namely,
  let us pick a point $p \in \Sigma$ and define $u_\epsilon(x) = u(\operatorname{exp}_p(\epsilon x))$.
  The equality~\eqref{eq:rescale-energy} implies the set $\brc{u_\epsilon}$ is bounded
  on $B_1(0) \subset T_p M$. By Lemma~\ref{lem:subseq-strong-conv}, we find a harmonic
  map $u_0$ such that for a subsequence, $\mathfrak{t}_\epsilon \to \mathfrak{t}$, and therefore,
  $\m{\partial_r\mathfrak{t}_\epsilon\partial_r^*} = \abs{\partial_r u_\epsilon}^2 \to
  \abs{\partial_r u_0}^2$.
  Then~\cite[equation~10a]{Price:1983:monoton-formula} yields
  \begin{align}
    \lim_{\epsilon\to0}\int_{B_1(0)}r^{2-m}\abs{\partial_r u_\epsilon}^2
       &= \lim_{\epsilon\to0}\int_{B_\epsilon(p)}r^{2-m}\abs{\partial_r u}^2
    \\ &\leq \lim_{\epsilon\to0} e^{C\epsilon}\epsilon^{2-m} \int_{B_\epsilon(p)}\abs{du}^2 - L
    \\ &= 0,
  \end{align}
  where $L = \lim_{\epsilon\to 0} \epsilon^{2-m} \int_{B_{\epsilon}(p)} \abs{du}^2$.
  Hence, $u_0$ is radially constant, i.e. $\partial_r u_0 = 0$. The dimension reduction
  argument~\cite[Section~5]{Schoen-Uhlenbeck:1982:regularity-theory-harm-maps} together
  with \cite*[Lemma~2.3]{Hong-Wang:1999:stable-harmonic-maps}
  yields the following conclusion. If $3 \leq d + 1 \leq 6$ and
  there are no nonconstant stable harmonic maps of the form
  $\phi(\frac{x}{\abs{x}})\colon \R^{d+1} \to \Sph^\infty$
  where $\phi \in C^\infty(\Sph^{d}, \Sph^\infty)$,
  then the Hausdorff dimension of $\Sigma$ is at most $m-7$.
  When $m = 7$, the same argument, as in the end of
  \cite[Section~5]{Schoen-Uhlenbeck:1982:regularity-theory-harm-maps}, proves discreteness.

  To see that all such stable harmonic maps are constant, one can
  follow, e.g., \cite*[Proposition~2.5]{Lin-Wang:2006:stable-harm-maps-spheres}
  and conclude that a nonconstant map leads to the inequality $\frac{(d-1)^2}{4d} \geq A = 1$,
  which is a contradiction
  (in our case, the constant in the stability inequality equals $1$ rather than $\frac{k-2}{k}$).
\end{proof}

\subsection{Finite dimensionality}
Let $M$ be a connected Riemannian manifold without boundary (not necessarily complete) with $\dim M \geq 2$.

If $\Omega$ is a bounded Lipschitz domain with $\partial\Omega \neq \varnothing$, then for each map $f \in H^{1}(\Omega, \Sph^\infty)$
that is linearly full on $\partial\Omega$, one can construct its harmonic replacement $u \in H^1(\Omega, \Sph^\infty)$ by minimizing the energy functional
over all positive nuclear operators $\mathfrak{t}_u = \sum_i u^i \otimes u^i \in \tensor{H^1(\Omega)}$ such that
$\mathfrak{t}_u|_{\partial\Omega} = \mathfrak{t}_f|_{\partial\Omega}$ and $\m{\mathfrak{t}_u} = 1$.
The direct method produces a minimizing positive tensor $\mathfrak{t}_u$. Any such minimizer gives a stable harmonic map which is linearly full.

Thus, linearly full harmonic maps $u \colon \Omega \to \Sph^\infty$ do exist. The preceding construction, however,
relies on the inequality $H^1(\Omega) \neq H^1_0(\Omega)$; by contrast, $H^1(M) = H^1_0(M)$ when $M$ is closed. In this section, we prove that every harmonic
map $u \in H^1_0(M, \Sph^\infty)$ with $\ind (\Delta - \abs{du}^2) < \infty$
takes values in a finite-dimensional subsphere $\Sph^n \subset \Sph^\infty$.

For complete manifolds, this follows almost immediately from \cite{Devyver:2012:finite-morse-index-shrodinger}. We adapt the argument to the noncomplete setting.
Recall the notions of index and stability introduced in~\eqref{eq:index} and Definition~\ref{def:stable}.
We write $\ind V := \ind (V,M)$.
\begin{lemma}\label{lem:stability-and-lower-bound}
  Let $V \in L^2_{loc}(M)$ with $\ind V < \infty$. Then there exists a compact $K \subset M$ such that
  $V$ is stable on $M\setminus K$, that is, $\ind (V,M\setminus K) = 0$.
  The symmetric operator $(\Delta - V)\colon C^\infty_0(M) \to L^2(M)$ is bounded from below on $L^2(M)$; therefore,
  its Friedrichs extension is well defined.

  Moreover, if $0 \leq V$, then one has $H^1_0(M) \subset \dom Q_V$, where $Q_V$ is the quadratic form associated with the extension.
\end{lemma}
\begin{proof}
  The proof of the first statement is a modification of Proposition~\ref{prop:con-bilin-form}.
  Since the index is finite, there exists a compact $K \subset M$ such that
  $V$ is stable on $U_0 := M\setminus K$. Indeed, if this were not the case, we would find $\phi \in C^\infty_0(U_0)$ with $Q_V(\phi) < 0$ and
  repeat the construction with $K' := K \cup \supp \phi$. After at most $\ind V$ steps, the construction must terminate.

  Then argue as in the proof of Proposition~\ref{prop:con-bilin-form}: cover $K$ with finitely many $\brc{U_i}_{i=1}^{n}$ such that $\ind (V,U_i) = 0$, and
  the partition-of-unity argument applied to the covering $M = \bigcup_{i=0}^{n} U_i$ yields a constant $C > 0$ such that
  \begin{equation}
    \int \phi^2 V \leq \int \abs{d\phi}^2 + C\int \phi^2 \quad \forall \phi \in C^\infty_0(M),
  \end{equation}
  which implies the desired lower bound on $(\Delta - V)|_{C^\infty_0(M)}$. Thus $(\Delta - V)|_{C^\infty_0(M)}$ is symmetric and bounded from below,
  and therefore admits a Friedrichs extension (see, for example, \cite[Theorem~4.4.5]{Davies:1995:spectral-theory}).
  We denote the domain of its closed quadratic form by $\dom Q_V \subset L^2$,
  where $\dom Q_V$ is the completion of $C^\infty_0(M)$ with respect to the inner product $Q_V(\cdot) + (C+1)\norm{\cdot}^2_{L^2(M)}$.

  When $0 \leq V$, the same estimate tells us that $V$ is a bounded bilinear form on $H^1_0(M)$; hence $H^1_0(M) \subset \dom Q_V$.
\end{proof}

\begin{lemma}\label{lem:positive-solution}
  Let $V \in L^\infty_{loc}(M)$ with $\ind V < \infty$. Then there exists a positive function $w \in C^{1}(M)$
  such that $V_1 = w^{-1}(\Delta - V)w$ is an $L^\infty$ function with compact support.
\end{lemma}
\begin{proof}
  Lemma~\ref{lem:stability-and-lower-bound} shows that $V$ is stable on $M\setminus K$. By \cite[Lemma~3.10]{Pigola-Rigoli-Setti:2008:vanishing-finiteness-results}, this is equivalent
  to the existence of a positive $w' \in C^1(M\setminus K)$ such that $(\Delta  - V)w' = 0$ weakly. Note that the proof of
  \cite[Lemma~3.10]{Pigola-Rigoli-Setti:2008:vanishing-finiteness-results} does not require completeness of $M$.
  Choose $\phi \in C^\infty_0(M)$ such that $0 \leq \phi \leq 1$ and $\phi \equiv 1$ on a neighborhood of $K$, and set
  $w := \phi + (1-\phi)w'$. It remains to check that $(\Delta - V)w \in L^\infty$. Indeed, weakly, one has
  $(\Delta - V)w = (\Delta - V)\phi - (\Delta \phi)w' + 2\brt{d \phi, dw'} \in L^\infty_0(M)$ (cf. \cite[Lemma~4.6]{Devyver:2012:finite-morse-index-shrodinger}).
\end{proof}

\begin{theorem}[{cf. \cite[Theorem~1.3]{Devyver:2012:finite-morse-index-shrodinger}}]
  \label{thm:fin-ind-fin-ker}
  Let $M$ be a connected Riemannian manifold of $\dim M \geq 2$, and let $V \in L^\infty_{loc}(M)$ with $\ind V < \infty$. Then the kernel of the Friedrichs extension of $\Delta - V$
  on $L^2(M)$ is finite-dimensional.
\end{theorem}
\begin{proof}
  By Lemma~\ref{lem:stability-and-lower-bound}, the operator $\Delta - V$ is bounded from below on $C^\infty_0(M) \subset L^2(M)$ and
  has the Friedrichs extension. The kernel of this extension is $\ker Q_V = \set{u \in \dom Q_V}{Q_V(u,v) = 0 \ \forall v \in \dom Q_V}$. By Lemma~\ref{lem:positive-solution},
  choose a positive function $w \in C^1(M)$ such that
  $V_1 = w^{-1}(\Delta - V)w \in L^\infty(M)$ is compactly supported and
  consider the unitary map $L^2(M, w^2) \to L^2(M)$, $\phi \mapsto \phi w$. Under this isometry, for $w\phi \in C^\infty_0(M)$, integration by parts
  yields
  \begin{multline}
    Q_V(\phi w) = \int \abs{d\phi}^2 w^2 + \int \brt{d(\phi^2w), dw} - \int \phi^2wVw
    \\ = \int \abs{d\phi}^2 w^2 + \int V_1\phi^2 w^2 =: Q_1(\phi).
  \end{multline}
  Therefore, the quadratic form $(Q_V, \dom Q_V)$ is unitarily equivalent to the quadratic form $(Q_1, H^1_0(M, w^2))$, where,
  a priori, $H^1_0(M, w^2)$ is the closure of $\set{(\phi,d\phi)}{\phi \in w^{-1}C^\infty_0}$ in $L^2(M,w^2)\oplus L^2(M,w^2, T^*M)$.
  Since $w$ is positive and $C^1$, both $w$ and $w^{-1}$ are locally bounded. Hence the space $H^1_0(M, w^2)$ is the closure of $C^\infty_0$ in
  \begin{equation}\label{eq:weighted-H1}
    H^1(M, w^2) = \set*{\phi \in H^1_{loc}(M)}{\int (\abs{d\phi}^2 + \phi^2) w^2 < \infty}.
  \end{equation}
  To estimate $\dim \ker Q_V = \dim \ker Q_1$, for any open set $\Omega \subset M$, we define
  $\dom Q_{1,\Omega} := \overline{C^\infty_0(M)|_{\Omega}}^{H^1(\Omega, w^2)} \subset L^2(\Omega, w^2)$ and
  \begin{equation}\label{eq:index-plus-null}
    \ind_{0} (Q_1,\Omega) =  \sup \set*{\dim F}{F \subset \dom Q_{1,\Omega} \text{ s.t. }Q_1|_F \leq 0}.
  \end{equation}
  Clearly, $\dim \ker Q_1 \leq \ind_{0} (Q_1, M)$.
  Now, take a smooth $\Omega \Subset M$ such that $\supp V_1 \subset \Omega$. Because $M$ is connected, the closure of each connected component of
  $M\setminus\overline{\Omega}$ meets $\partial \Omega$. A collar of each connected component of $\partial \Omega$ lies in a single exterior component,
  so the number of exterior components is no larger than the number of boundary components. Since $\partial\Omega$ is compact, it has finitely many components.

  Because of~\eqref{eq:weighted-H1} and smoothness of $\Omega$, we have $\dom Q_{1,\Omega} = H^1(\Omega, w^2) = H^1(\Omega)$ and
  $\dom Q_{1, M\setminus\overline{\Omega}}$ is the subspace of $H^1(M\setminus\overline{\Omega}, w^2)$ that has no imposed Dirichlet condition on
  $\partial \Omega$, but retains the Friedrichs condition at infinity. Also,
  the map $H^1_0(M, w^2) \hookrightarrow \dom Q_{1, M\setminus\overline{\Omega}} \oplus \dom Q_{1,\Omega}$ is injective. It follows from the definition
  of $\ind_{0}$ that
  \begin{equation}
    \ind_{0} (Q_1,M) \leq \ind_{0} (Q_{1, M\setminus\overline{\Omega}} \oplus Q_{1,\Omega}) =  \ind_{0} (Q_1, {M\setminus\overline{\Omega}}) + \ind_{0} (Q_1, \Omega).
  \end{equation}
  Since $\supp V_1 \subset \Omega$, we have $Q_1(\phi) = \int \abs{d\phi}^2 w^2$ for all $\phi \in \dom Q_{1, M\setminus\overline{\Omega}}$. Hence
  $\ind_{0} (Q_1, {M\setminus\overline{\Omega}})$ is at most the number of connected components of $M\setminus\overline{\Omega}$, which is finite.
  The second term, $\ind_{0} (Q_1, \Omega)$ is also finite because the embedding $H^1(\Omega) = \dom Q_{1,\Omega} \hookrightarrow L^2(\Omega,w^2) = L^2(\Omega)$ is compact and
  the associated self-adjoint operator has compact resolvent. Thus, $\dim \ker Q_1 < \infty$ and the proof is complete.
\end{proof}
\begin{corollary}\label{cor:finite-sphere-map}
  Let $M$ be a connected Riemannian manifold of $\dim M \geq 2$, and let
  $u \in H^1_0(M,\Sph^\infty) = \set{v \in H^1_0(M,\ell^2)}{\abs{v} = 1}$ be a harmonic map with $\ind (\Delta - \abs{du}^2) < \infty$.
  Then there exists a subsphere $\Sph^n \subset \Sph^\infty$ such that $u(x) \in \Sph^n$ for a.e. $x \in M$.

  In particular, this holds when $M$ is closed and $u \in H^1(M,\Sph^\infty)$.
\end{corollary}
\begin{proof}
 Finite index implies that $V := \abs{du}^2$ is locally stable (see the proof of Proposition~\ref{prop:con-bilin-form}), so by
 Theorem~\ref{thm:reg-harm-map}, $V$ is smooth outside of a closed set $Z$ of zero capacity. Also, $H^1_0(M) = H^1_0(M\setminus Z)$.
 The harmonic map equation gives
 $\Delta u^i = V u^i$ weakly. Also, $H^1_0 \subset \dom Q_V$ by Lemma~\ref{lem:stability-and-lower-bound} applied on $M_1 := M\setminus Z$,
 so the coordinate functions $u^i \in \dom Q_V$ lie in the kernel of the Friedrichs
 extension of $\Delta - V$.
 As $H^1(U) = H^1(U\setminus Z)$ for every open set $U \Subset M$, any locally constant function on $M_1$ is constant; hence
 $M_1$ is connected as well. Applying Theorem~\ref{thm:fin-ind-fin-ker} to $M_1$, we conclude that
 the linear span of the coordinate functions $u^i$ is finite-dimensional.
\end{proof}

\medskip
\printbibliography
\end{document}